\newtheorem{theorem}{Theorem}[section]
\newtheorem{proposition}[theorem]{Proposition}
\newtheorem{lemma}[theorem]{Lemma}
\newtheorem{corollary}[theorem]{Corollary}
\theoremstyle{definition}
\newtheorem{definition}[theorem]{Definition}
\theoremstyle{remark}
\newcounter{smalllist}
\numberwithin{equation}{section}
\newcommand{\f}{\frac}
\newcommand{\beq}{\begin{equation}}
\newcommand{\eeq}{\end{equation}}
\newcommand{\bal}{\begin{align}}
\newcommand{\eal}{\end{align}}
\newcommand{\bals}{\begin{align*}}
\newcommand{\eals}{\end{align*}}
\newcommand{\del}{\delta}
\newcommand{\al}{\alpha}
\newcommand{\be}{\beta}
\newcommand{\ga}{\gamma}
\newcommand{\la}{\lambda}
\newcommand{\til}{\tilde}
\newcommand{\Om}{\Omega}
\newcommand{\mbr}{\mathbb R}
\newcommand{\mbn}{\mathbb N}
\newcommand{\Ga}{\Gamma}
\newcommand{\de}{\delta}
\newcommand{\De}{\Delta}
\newcommand{\ta}{\theta}
\newcommand{\si}{\sigma}
\newcommand{\Si}{\Sigma}
\newcommand{\cb}[1]{\left\{#1\right\}}
\newcommand{\sqb}[1]{\left[#1\right]}
\newcommand{\rb}[1]{\left(#1\right)}
\newcommand{\inn}[1]{\langle #1 \rangle}
\newcommand{\mclf}{\mathcal F}
\newcommand{\mcll}{\mathcal L}
\newcommand{\mbe}{\mathbb {E}}
\newcommand{\commentout}[1]{}
\begin{document}
\title[Propagation of Chaos in Stochastic Chemical Reaction-Diffusion Systems]
{Quantitative Propagation of Chaos in the Bimolecular Chemical Reaction-Diffusion model}

\author[T. Lim]{Tau Shean Lim}
\address[T. Lim]{Department of Mathematics, Duke University, Durham NC 27708, USA}
\email{taushean@math.duke.edu} 

\author[Y. Lu]{Yulong Lu}
\address[Y. Lu]{Department of Mathematics, Duke University, Durham NC 27708, USA}
\email{yulonglu@math.duke.edu} 

\author[J. Nolen]{James Nolen}
\address[J. Nolen]{Department of Mathematics, Duke University, Durham NC 27708, USA}
\email{nolen@math.duke.edu}



\begin{abstract}
We study a stochastic system of $N$ interacting particles which models bimolecular chemical reaction-diffusion.
In this model, each particle $i$ carries two attributes: the spatial location $X_t^i\in \mathbb{T}^d$,
and the type $\Xi_t^i\in \{1,\cdots,n\}$.
While $X_t^i$ is a standard (independent) diffusion process,
the evolution of the type $\Xi_t^i$ is described by pairwise interactions between different particles
under a series of chemical reactions described by a chemical reaction network.
We prove that, as $N \to \infty$, the stochastic system has a mean field limit which is described by a nonlocal reaction-diffusion partial differential equation.
In particular, we obtain a quantitative propagation of
chaos result for the interacting particle system. Our proof is based on
the relative entropy method used recently by Jabin and Wang  \cite{JW18}. The key ingredient of
the relative entropy method is a large deviation estimate for a special partition function, which was proved
previously by combinatorial estimates. We give a simple probabilistic proof based on a novel martingale argument.

\end{abstract}

\maketitle

\newcommand{\by}{{\boldsymbol y}}
\newcommand{\bx}{{\boldsymbol x}}
\newcommand{\bxi}{{\boldsymbol {\xi}}}
\newcommand{\mbx}{\mathbb{X}}
\newcommand{\bX}{\boldsymbol{X}}
\newcommand{\bY}{\boldsymbol{Y}}
\newcommand{\bXi}{\boldsymbol{\Xi}}
\newcommand{\fs}{\mathfrak{S}}
\newcommand{\fc}{\mathfrak{C}}
\newcommand{\fr}{\mathfrak{R}}
\newcommand{\fe}{\mathfrak{e}}
\newcommand{\mbo}{\mathbbm{1}}
\newcommand{\Ta} {\Theta}
\newcommand{\mch}{\mathcal{H}}
\newcommand{\bDe} {\boldsymbol{\De} }
\newcommand{\bS} {\boldsymbol{S} }
\newcommand{\bE} {\boldsymbol{E} }
\newcommand{\bL} {\boldsymbol{\mcll} }
\newcommand{\bT} {\boldsymbol{T} }

\long\def\metanote#1#2{{\color{#1}\
\ifmmode\hbox\fi{\sffamily\mdseries\upshape [#2]}\ }}
\long\def\TS#1{\metanote{green!55!black}{{\tiny TS} #1}}
\long\def\YL#1{\metanote{blue!70!black}{{\tiny YL} #1}}
\long\def\JN#1{\metanote{red!70!black}{{\tiny JN} #1}}



\section{Introduction}
In this paper, we consider a class of stochastic interacting particle systems modeling a chemical reaction-diffusion process.
In this model, there are $N$ particles, indexed by $i\in \{1,\cdots, N\}$. Each particle carries two attributes: a location $X_t^i\in \mathbb{T}^d$ (the $d$-dimensional torus),
and a chemical type $\Xi_t^i\in\{S_1,S_2,\cdots, S_n\}=\{1,\cdots, n_s\}$, where $n_s$ is the number of distinct chemical species.
As time $t$ progresses, $X_t^i$ diffuses as a standard Brownian motion in space independently, with the speed of diffusion depending on its type $\Xi_t^i$.
The type $\Xi_t^i$ changes in time according the pairwise chemical interactions between different particles, with transition rates depending on their locations,
types, and a set of \emph{bimolecular} chemical reactions $R_1,R_2,\cdots, R_{n_r}$ of the form
\begin{align}\label{bi-reaction}
S_k+S_l\xrightarrow{R} S_{k'}+S_{l'},\qquad k,l,k',l'\in \{1,\cdots,n_s\}.
\end{align}
Specifically, a pair of particles $i,j$ with types matching the input of a reaction $R$ (i.e. $(\Xi_t^i,\Xi_t^j)=(k,l)$ or $(l,k)$ 
in \eqref{bi-reaction}) may react and instantly change to types $(k',l')$ or $(l',k')$ at a random time that depends on their spatial location $X_t^i,X_t^j$. 
Our main result (Theorem \ref{main}) shows that in the limit $N \to \infty$, the empirical measure of the particles converges in a suitable sense to the solution of a nonlinear system of reaction-diffusion equations.

The precise description of the stochastic system will be given in the next section, using the notations from chemical reaction network theory.
For the time being, let us consider a simple special case, which involves only two types of particles $S_1,S_2$ (and hence $n_s=2$), and a single irreversible chemical reaction (hence $n_r=1$)
\begin{align}\label{special}
S_1+S_2\to 2S_2.
\end{align}
As mentioned earlier each particle diffuses independently in space with diffusivity depending on its type. A reaction, which turns a type-$S_1$ particle into a type-$S_2$ particle, happens at a random time, with a rate depending on its location relative to the type-$S_2$ particles. The reaction \eqref{special} is irreversible, in the sense that
once a particle turns into type-$S_2$, it can never turn back to type-$S_1$ again.
Hence, we have the following stochastic system:
\begin{align}\label{SRDE}
\left\{ \begin{array}{ll}
X^i_t&=X_0^i+ \displaystyle \int_0^t \si(\Xi_{s}^i)dB^i_s, \\
\Xi_t^i&=\Xi_0^i + \displaystyle E^i\rb{\f 1 N \sum_{j=1,j\neq i}^N \int_0^t \mbo\{(\Xi_{s-}^i,\Xi_{s-}^j)=(1,2)\} \Phi(X_s^i-X_s^j)ds },\\
&\qquad \mbox{ for }1\le i\le N.
\end{array} \right.
\end{align}
where $\si:\{1,2\}\to \mbr^+,\Phi\in (L^1\cap L^\infty)(\mathbb{T}^d)$ are prescribed diffusion coefficients and a reaction rate kernels, $\{B^i_t\}_{1\le i\le N}$ are independent standard $d$-dimensional Brownian motions, and $\{E^i(t)\}_{1\le i\le N}$ are independent unit Poisson jump processes.
Here, if the type $\Xi_t^i$ of a particle $i$ initially is 1 (hence a type-$S_1$ particle), it then turns into 2 at a rate of $\f 1N\sum_{j:\Xi_{t}^j=2}\Phi(X_t^i-X_t^j)$.
The scaling $\f 1N$ here is critical in deriving the mean field limit.
When the type $\Xi_t^i$ turns into 2, it remains a type-2 forever, because the rate is then zero (as $\mbo\{(\Xi_s^i,\Xi_s^j)=(1,2)\}=0$).

The objective of this paper is to establish the convergence, as $N\to\infty$, of the empirical measure of the process (which is a time-dependent random measure on $\mathbb{T}^d\times \{1,2,\cdots,n\}$)
\begin{align}\label{emp-meas}
\mu_N(t)=\f 1 N \sum_{i=1}^N \del_{(X_t^i,\Xi_t^i)}
\end{align}
to the solution $\bar \rho$ of a mean field limit equation, which is a deterministic system of $n$ non-local reaction-diffusion equations, provided that the initial distribution $\mu_N(0)$ converges to $\bar \rho(0)$ in some appropriate sense.
For instance, in the special case \eqref{special}, the limiting system of equations is
\begin{align}\label{RDS}
\left\{ \begin{array}{ll}
\partial_t u&=\displaystyle \f {\si(1)^2}2 \De u -(\Phi*w)u, \\
\partial_t w&=\displaystyle \f{\si(2)^2}2 \De w + (\Phi*w)u,
\end{array} \right.
\end{align}
where the components $u,w:[0,\infty)\times \mathbb{T}^d\to \mbr^+$ represent the distribution of type-1,2 particle respectively, and ``$*$" denotes the convolution operator
\begin{align*}
(\Phi*u)(x)=\int_{\mathbb{T}^d} \Phi(x-y)u(y)dy.
\end{align*}
The limit system in the general case, $n \geq 2$, is given below in \eqref{limitsystem}.

In the case that the initial distribution is well-mixed (constant density), the mean-field limit coincides with a \emph{mass action} system \cite{HJ72}.  In this case, the normalized concentrations $\{\bar \rho_k(t)\}_{k=1}^n$ for the $n$ chemical species do not depend on $x$, and they satisfy a system of $n$ ordinary differential equations. In the special case \eqref{special}, this is a simple Lotka-Volterra system, with $(u,w)=(\bar \rho_1,\bar \rho_2)$, and some $\la>0$:
\begin{align}
\left\{ \begin{array}{ll}
\dot u&= -\la uw,\\
\dot w&= \la uw,
\end{array} \right.
\end{align}
(cf. \eqref{RDS}).
There is also a stochastic counterpart of these systems in which case the total number $\{M_k(t)\}_{t\ge 0}$ of each species $k\in \{1,\cdots,n\}$ is a counting process satisfying a certain coupled stochastic system described by appropriate time-changed Poisson processes, whose rates depend on the current configuration $\{M_k(t)\}_{1\le k\le n}$.
Again, for the special case \eqref{special}, the stochastic system is given by (with $(U,W)=(M_1,M_2)$)
\begin{align*}
\left\{ \begin{array}{ll}
U_t&= U_0 -\displaystyle  E\rb{\la \int_0^t U_{s-}W_{s-}ds }, \\
W_t&= W_0 +\displaystyle  E\rb{\la \int_0^t U_{s-}W_{s-}ds } ,
\end{array} \right.
\end{align*}
where $E$ is a unit rate Poisson process (cf. \eqref{SRDE}).
With an appropriate scaling (in the rate of reactions), it can be shown that the process $\{M^N_k(t)\}_{1\le k\le n}$ described by the stochastic system converges, in a certain sense, to the solution $\{\bar \rho_k(t)\}_{1\le k\le n}$ of a mass action ODE system as $N\to\infty$ \cite{Kurtz71, AK15}.
This convergence result (at least for the case of bimolecular reactions), is a special case of our main result, when the initial distribution is well-mixed spatially (i.e., $X^i_t$, conditioned on $\Xi^i_t$, is a uniform random variable).

Models for chemical reactions with spatial diffusion have been studied for more than a century,
going back to the work of Smoluchowski \cite{Sm17} (see also \cite{Doi76a, AI14}).
Nevertheless, there are relatively few works that make a mathematically rigorous and
quantitative connection between stochastically interacting particles (as a microscopic model) 
and systems of reaction-diffusion equations, as a mean field limit -- this is the motivation for our work. 
De Massi, Ferrari, and Lebowitz \cite{DMFL86} derived a scalar reaction-diffusion equation as the limit 
of a Glauber-type spin system on a lattice $\mathbb{Z}^d$. This result was generalized by Durrett and Neuhauser 
\cite{DN94} to allow for more than two states/types, leading to a system of reaction-diffusion equations 
for reactions of the form $S_k \to S_j$, with rate depending on the density of other types.  
The reaction-diffusion limit is useful for studying phase transitions in the underlying stochastic model.
See \cite{CDP13} for related results in the spatially-discrete setting. 
In a continuum setting, Oelschl\"ager \cite{Oel89} analyzed a system of diffusing particles 
in which each particle may give birth $(S_k \to 2S_k)$, die ($S_k \to \emptyset$),
or change its type $(S_k \to S_j)$ at rates that depend on the density of other types,
leading to a system of reaction-diffusion equations in the infinite population limit. 
See \cite{Dit88, Kot86, Kot88, FLO19} for related works involving scalar reaction-diffusion equations.
Whether in the spatially discrete or continuous setting, most of these models involve only one particle changing
its type at a time, although the rate of the reaction may depend on the types and locations of other particles.
The recent paper \cite{ABRS19}
studies the 
mean field limit of a leader-follower dynamics, which models transitions between two labels (followers and leaders).
The mean field limit obtained there involves transport and reaction, but without diffusion.
Moreover, the reaction rate depends only on the 
global state of the system, but not on specific locations. 
Compared to these works, the stochastic systems that we study allow for two particles to change type simultaneously with 
location-dependent reaction rate,
in a reaction of the form $S_1 + S_2 \to S_3 + S_4$, for example. 
However, the total number of particles is conserved in our systems.

The derivation of macroscopic equations from  interacting particle systems is a classical topic in mathematical physics dating back to Maxwell and Boltzmann. A prototype microscopic model that has been studied a lot in the literature is the following McKean-Vlasov system of stochastic differential equations:
\begin{equation}\label{MV-eq}
dX_t^i= \sigma(t, X^i_t) dB_t^i+\f 1 N \sum_{j=1,j\neq i}^N F(X_t^i-X_t^j)d t,\quad 1\le i\le N.
\end{equation}
For a Lipschitz continuous force $F$ and a diffusion coefficient $\sigma$, this system has a {\em mean field limit} which is a nonlocal drift-diffusion equation (usually called the McKean-Vlasov equation); see e.g. \cite{Sznitman91}
and \cite{Glose16} for a proof. An important concept in studying the particle system \eqref{MV-eq}, and others like it, is {\em propagation of chaos}, due to  Kac \cite{Kac56} and McKean \cite{Mckean66, Mckean67} (see also the classical monograph by Sznitman \cite{Sznitman91}), which roughly means that as $N \to \infty$, any finite group of the particles are asymptotically chaotic (independent) if they are initially chaotic (independent); see Definition \ref{defn:kac} for a precise statement.  In recent years, some progress has been made in proving mean field limits
of McKean-Vlasov systems with singular interaction forces or kernel; we refer the interested reader to
\cite{FHM14,JW16,HJ15,JW18,serfaty2018mean}.
Among these results, we specifically mention the recent work by Jabin and Wang \cite{JW18}, who investigate the mean field limit
for stochastic systems \eqref{MV-eq} with $F\in W^{-1,\infty}$ which in particular includes the Biot-Savart kernel.
They obtain a quantitative estimate for the propagation of chaos in terms of the relative entropy between the $k$-marginals of
the joint distribution of $N$ particles and the $k$-tensorized distribution of the mean field limit.   
The relative entropy method was initiated by
Yau \cite{Yau91} in the proof of hydrodynamic limits of Ginzburg-Landau Models, where the relative entropy of the current density with respect to some local Gibbs equilibrium serves as a controllable norm whose convergence to zero implies the existence of macroscopic limit; see \cite{KL99,V98} for more discussion about this method in interacting particle models from statistical physics.
The recent work \cite{JW18}  demonstrated the potential power of the relative entropy method
in the study of mean field limits of singular stochastic dynamics. Compared to \cite{Yau91}, the relative entropy approach used in \cite{JW18} (and in the present article) is based on the idea of  controlling directly the relative entropy between the density of the particle system and that of the mean field limit, instead of the local equilibrium measure of the particle system.

Our strategy of proving propagation of chaos in the chemical reaction model is similar to that of Jabin and Wang \cite{JW18}. Specifically,
we show the propagation of chaos by proving an explicit estimate for the relative entropy between the
joint law of particle system and the tensorized law of the mean field limit.
The key ingredient of the proof is
a large deviation inequality of the form
\begin{align}\label{ldp-est}
\sup_{N\ge 2} \mbe\sqb{\exp\rb{\f 1 N \sum_{i,j=1,i\neq j}^N f(Y_i,Y_j) }  } <\infty,
\end{align}
where $\{Y_i\}_{i\in \mbn}$ are i.i.d. random variables, and $f$ is a $L^\infty$ function satisfying some appropriate cancellation condition (see Lemma \ref{Concen}).
Note that this inequality requires weaker regularity condition than the usual condition
in obtaining large deviation bounds through Varadhan's lemma (c.f. \cite[Theorem 2.5]{V16} and \cite[Theorem 1.2.1]{DE97})
since the function $f$ needs not be continuous.
The inequality \eqref{ldp-est} (with a weaker assumption on $f$) was proved by  Jabin and Wang (see \cite[Theorem 4]{JW18}) using combinatorial estimates. One of the contributions of this work is to provide a short probabilistic proof, assuming $f$ is bounded. Our proof relies on identifying a martingale structure in the exponential of \eqref{ldp-est} which allows us to conclude the estimate
from a sharp Marcinkiewicz-Zygmund inequality for a martingale difference sequence.

One may wonder why we only restrict our attention to the case of bimolecular reactions \eqref{bi-reaction} (two inputs, two outputs), 
instead of a more general reaction structure. 
First, although one may still discuss the mean field limit for general reaction structure, 
the notion of the propagation of chaos only applies to those systems that conserve the total number of particles in time.
Since our work is based on this notion, it excludes the case of those reactions with uneven inputs and outputs. 
It is also worthwhile to consider chemical reactions taking $m$ inputs, $m$ outputs for $m\ge 3$. However, proving a mean field limit 
for these models requires a delicate estimate of the  quantity \eqref{ldp-est}, which now involves $m$-body interactions.
This turns out to be highly-nontrivial and will be investigated in a future work.

The rest of the paper is organized as follows.
In the coming section, we begin with a brief introduction to general notions of chaos, chemical reaction networks and some notations, then we set up the $N$-particles
system and state the main result of its propagation of chaos property (Theorem \ref{main}).
In Section \ref{generator}, we will determine the infinitesimal generator $\bL_N$ of the process $(\bX^N_t,\bXi^N_t)=\{(X_t^i,\Xi_t^i)\}_{1\le i\le N}$ 
defined by the $N$-particles system, and its dual $\bL_N^*$.
The proof of the main theorem is presented in Section \ref{Int-ineq}, which is based on the idea of establishing appropriate differential
inequality for the normalized relative entropy and then applying Gr\"onwall lemma.
The large deviation inequality and a lemma used in the proof of this main theorem will be proved in Section \ref{LDP-ineq}.
An appendix, briefly discusses the well-posedness and regularity of the mean field limit equation and the Fokker-Planck equation for $N$-particles system, is also included in Section \ref{appen}.

\subsection*{Acknowledgement}

The work of James Nolen was partially funded through grant DMS-1351653 from the National Science Foundation.

\section{Settings and the Main Result}\label{setting}

\subsection{Bimolecular chemical reaction network}

To describe the model, we adopt the notations from the theory of chemical reaction networks (see e.g. \cite{AK15}), so let us now state the definition of general chemical reaction networks.
A \emph{chemical reaction network} is a triplet $(\fs,\fc,\fr)$ given as follows.
\begin{enumerate}
	\item $\fs=\{1,2,\cdots,n\}=\{S_1,\cdots,S_n\}$ is a finite set of chemical species.
	\item $\fc=\{C_1,C_2,\cdots, C_{n_c}\}\subset \mbn_0^n$ is a finite collection of  chemical complexes. For instance, 
	$C=\sum_{k=1}^n \al_k\fe_k\in \mbn_0^n$ (where $\{\fe_k\}_{k=1,\cdots, n}$ is the unit vector in $\mbr^n$, $\al_k\in \mbn_0$) 
	represents the complex formed by a total of $\al_k$ type-$S_k$ particles for each $1\le k\le n$.
	\item $\fr=\{R_1,R_2,\cdots, R_{n_r}\}\subset \fc\times \fc$ is a finite collection of chemical reactions. 
	For instance, $R=(C,C')\in\fr$ denotes the chemical reaction between complexes $C\to C'$. 
	If $C=\sum_{k=1}^n \al_k\fe_k, C'=\sum_{k=1}^n \be_k\fe_k$, then the reaction can be written in the form of a chemical equation
	\begin{align*}
	\sum_{k=1}^n \al_k S_k\xrightarrow{R} \sum_{k=1}^n \be_k S_k.
	\end{align*}
	The complex $C$ (the left hand side of the above) will be called \emph{the input of the reaction $R$}, whereas $C'$ (the right hand side) is \emph{its output}.
\end{enumerate}

In this work, we restrict our attention to the case of a \emph{bimolecular reaction network}, which is a particular case of a chemical reaction network,
whose complexes $C\in \fc$ are of bimolecular form, namely,
\begin{align*}
\fc\subset \{ \fe_{k}+\fe_{l}: k,l\in \fs \}.
\end{align*}
This means that all reactions $R\in \fr$ have the form $S_k+S_l \xrightarrow{R}  S_{k'}+S_{l'},$ for some $k,l,k',l'\in \fs$. In this case, we use the notation
\begin{align}\label{chem-reac}
k+l\xrightarrow{R} k'+l',\qquad (k,l)\xrightarrow{R}(k',l'),\qquad R:(k,l)\to (k',l').
\end{align}
It is possible that $k = l$ and/or $k' = l'$, so that the two input molecules and/or the two output molecules are of the same species (e.g. \eqref{special}).
Throughout this work, for every chemical reaction equation $R\in \fr$ written as above, to avoid ambiguity we always assume \emph{the ascending order for both input and output}, that is,
\begin{align}\label{convention}
k\le l,\qquad k'\le l'.
\end{align}
For $R\in \fr$, we denote $R^-$ (resp. $R^+$) the input (resp. output) of the reaction. For instance, a chemical reaction of the form \eqref{chem-reac}, we have
\begin{align}\label{in-output-red}
R^-=\{k,l\},\qquad R^+=\{k',l'\}.
\end{align}
We remark that the number of particles, and hence the total mass, are conserved in every bimolecular reaction.

\subsection{Notations}
We now introduce some general notations being used in the work.
Throughout this paper, we will consider stochastic processes in the state space
\begin{align*}
\Pi =\mbx\times \fs,
\end{align*}
where $\mbx=\mathbb{T}^d$ (the $d$-dimensional torus), and the discrete set $\fs =\{1,2,\cdots, n\}$ represents the type of particles.
Though the present work is mainly based on the case of a torus, we sometimes will also consider the whole space setting, i.e., $\mbx =\mbr^d$. 
We set the variable $y=(x,\xi)\in \Pi$, with $x\in \mbx$, $\xi\in \fs$, and denote
\begin{align*}
dm=dx\otimes d\#
\end{align*}
the canonical measure on $\Pi$, that is, the product of Lebesgue measure on $\mathbb{T}^d$ (or $\mbr^d$), and the counting measure $\#$ on $\{1,2,\cdots,n\}$.
For $N\ge 1$, denote variables
\begin{align*}
\by_N&=(y_1,\cdots, y_N)=(\bx_N,\bxi_N)\in \Pi^N,\\
y_k&= (x_k,\xi_k)\in \Pi=\mbx\times \fs,\qquad  k\in\{1,2,\cdots, N\},\\
\bx_N&=(x_1,\cdots,x_N)\in \mbx^N,\\
\bxi_N&=(\xi_1,\cdots, \xi_N)\in\fs^N.
\end{align*}
Specifically, the boldface symbols $\by_N,\bx_N,\bxi_N$ are for elements in the $N$-fold product spaces $\Pi^N,\mbx^N,\fs^N$, 
while the normal typeset symbols $y,x,\xi$ denote elements in $\Pi,\mbx,\fs$.
Likewise, the boldface uppercase symbols (e.g., $\bX_t^N,\bXi_t^N$) denote processes on the $N$-fold product spaces, 
while the normal uppercase symbols (e.g., $X_t,\Xi_t$) denote processes on $\mbx,\fs$.
Also, let $m^N$ be the $N$-fold product of the measure $m$, which is a measure on $\Pi^N$:
\begin{align*}
m^N:=m^{\otimes N}=\overbrace{m\otimes \cdots \otimes m}^N.
\end{align*}

Finally, for $N\ge 1,p\in[1,\infty]$, and $k\ge 0$, denote the spaces of measures and functions on $\Pi^N$ as below:
\begin{itemize} \setlength\itemsep{0.45em}
	\item $\mathcal M(\Pi^N)$ - the family of Borel measures on $\Pi^N$;
	\item $\mathcal P(\Pi^N)$ - the family of probability measures on $\Pi^N$;
	\item $C_0(\Pi^N)$ - the space of continuous functions on $\Pi^N$, vanishing at infinity;
	\item $L^p(\Pi^N)$ - the space of all $L^p$- integrable functions on $\Pi^N$ (w.r.t. $dm^N$);
	\item $C_0^k(\Pi^N)$ - the space of functions $f$ so that $f(\cdot,\bxi_N)\in C_0^k(\mbx^N)$ for all $\bxi_N\in \fs^N$;
	\item $W^{k,p}(\Pi^N)$ - the space of functions $f$ so that $f(\cdot,\bxi_N)\in W^{k,p}(\mbx^N)$ for all $\bxi_N\in \fs^N$. 
\end{itemize}
Also, define a bilinear form $\inn{\cdot,\cdot}:C_0(\Pi^N)\times \mathcal{M}(\Pi^N)\to \mbr$ by
\begin{align}\label{bilinear}
\inn{\varphi,\rho}&:= \int_{\Pi^N} \varphi(\by_N)d\rho(\by_N).
\end{align}

\subsection{The stochastic chemical reaction-diffusion system}
Fix dimension $d\ge 1$ and a bimolecular chemical reaction network $(\fs,\fc,\fr)$, with $\fs=\{1,2,\cdots, n\}$.
Fix a function
\begin{align}\label{si-def}
\si:\fs \to \mbr^+,
\end{align}
to represent the diffusion coefficient for each type of particle.
For each reaction $R\in \fr$ of the form \eqref{chem-reac}, we associate a non-negative function
\begin{align}\label{Phi-def}
\Phi_R\in L^1(\mbx)\cap L^\infty(\mbx),\quad \mbox{with }\Phi_R(x)=\Phi_R(-x) \mbox{ for all } x\in \mbx,
\end{align}
which will be called \emph{the reaction kernel of the chemical reaction $R$}.
The reaction kernel $\Phi_R$ will be used to define the rate at which the reaction $R$ between two particles occurs. One typical choice of reaction kernel, as introduced in \cite{Doi76a}, could be a cut-off function
supported on finite ball, i.e $\Phi_R(x) = \chi_{\{|x|\leq r\}}$ for some $r>0$. The symmetry assumption \eqref{Phi-def} is not essential to us and it is 
only used for the purpose of simplifying expressions in the mean field limit; see Section \ref{sec:formalmfl} for more details.

Next, we introduce some useful indicator functions. Given a fixed type $k\in \fs$, we denote by $\chi_k:\fs \to \{0,1\}$ the indicator
\begin{align*}
\chi_k(\xi)=\left\{ \begin{array}{ll}
1& \mbox{if }\xi=k,\\
0& \mbox{else}.
\end{array} \right.
\end{align*}
For $R\in \fr$ with $(k,l)\xrightarrow{R}(k',l')$, let $\chi_R^\pm:\fs^2\to\{0,1\}$ be the indicator function
\begin{align}\label{chi-def}
\chi_R^-(\xi,\xi')= \chi_{k}(\xi)\chi_l(\xi'),\qquad \chi_R^+(\xi,\xi')= \chi_{k'}(\xi)\chi_{l'}(\xi').
\end{align}
That is, $\chi^-_R(\xi,\xi')$ (resp. $\chi^+_R$) indicates the event when $(\xi,\xi')$ matches with the input $R^-$ (resp. output $R^+$) of the reaction $R$.
We stress here these indicators are \emph{order sensitive} when $k \neq \ell$ and $k' \neq \ell'$, in the sense that
\begin{align}\label{order}
\chi_R^-(k,l)=1,\quad \chi_R^-(l,k)=0,\quad \chi_R^+(k',l')=1,\quad \chi_R^+(l',k')=0.
\end{align}

We next introduce the ``random ingredients" for the model.
Let $i\in \mbn$ label particles involved in the modeled chemical reaction-diffusion process.
For each particle $i$, we associate the following independent random variables and stochastic processes to it:
\begin{enumerate}
	\item a $\Pi$-valued random variable $(X_0^i,\Xi^i_0)$;
	\item a standard Brownian motion $\{B^i_t\}_{t\ge 0}$.
\end{enumerate}
For each $R\in \fr$ and \emph{ordered
 }pair $(i,j)\in \mbn^2$ with $i\neq j$, we associate also
\begin{enumerate}
	\item[(3)] an independent unit rate Poisson process $\{E^{ij}_R(t)\}_{t\ge 0}$;
\end{enumerate}
These Poisson processes will be used to define the counts, up to a certain time, of the type-$R$ reactions that happens between the (ordered) pair of distinct particles $i$ and $j$.
The collection of processes $\{B_t^i\}_{t\ge 0}, \{E^{ij}_R(t)\}_{t\ge 0}$, for $i,j\in \mbn$ and $R\in \fr$ are assumed to be independent.
Let $(\Om,\mclf,\mathbb{P})$ be the probability space on which these random variables and processes are defined.

Fix $N\gg1$ large and consider the stochastic system of $N$-particles described as follows.
For each $i\in\{1,\cdots, N\}$, we consider the process $\{(X_t^i,\Xi^i_t)\}_{t\ge 0}$ on the state space $\Pi=\mbx\times \fs$, with $X^i_t\in \mathbb{X}$ representing the location of the $i$-th particle at time $t\ge 0$, and $\Xi^i_t\in \fs=\{1,2,\cdots,n\}$ representing its type.
Each particle $i$ diffuses in the spatial domain $\mbx$ independently, with diffusion coefficient $\si(\Xi^i_t)$, depending on its current type $\Xi^i_t$. Therefore, $X_t^i$ satisfies the SDE
\begin{align*}
X_t^i=X_0^i+\int_0^t \si(\Xi_s^i)dB_s^i,\qquad 1\le i\le N.
\end{align*}
Next consider the type process $\{\Xi_t^i\}$ for a given particle $i$, which is a pure jump process.
The type has initial value $\Xi_0^i$.
To describe the evolution, we introduce the \emph{reaction counter process} $\til E^{ij}_R(t)$ for a given reaction $R\in \fr$ and the (ordered) pair of particles $(i,j)$ (with $i\neq j$), which is the following time-change of the Poisson process $E^{ij}_R(t)$:
\begin{align*}
\til E^{ij}_R(t)&= E^{ij}_R \rb{ \f 1 {N} \int_0^t \chi_R^-(\Xi_{s-}^i,\Xi_{s-}^j) \Phi_R(X_s^i-X_s^j) ds } .
\end{align*}
Specifically, this is a counting process that counts the number of type-$R$ reactions occurring between the pair $(i,j)$ up to time $t$.
The rate of these reactions depends on the relative locations $X_t^i,X_t^j$, through the reaction kernel $\Phi_R$, and is given by
\begin{align*}
\f 1 N \cdot \chi_R^- (\Xi_{t-}^i,\Xi_{t-}^j ) \cdot \Phi_R(X_t^i-X_t^j).
\end{align*}
The scale $\f 1 N$ is the virtue of the mean field interaction, so that the pairwise interaction diminishes as the number of particles $N\to\infty$.
Observe that the rate is nonzero only when $\chi_R^-(\Xi_{t-}^i,\Xi_{t-}^j)=1$ and when $\Phi_R(X_t^i-X_t^j)\neq 0$. In other words,
a reaction of type-$R$ can happen between the pair $(i,j)$ only when the type $(\Xi_{t-}^i,\Xi_{t-}^j)$ matches with the input $R^-$ of
the reaction (i.e., $\Xi_{t-}^i=k,\Xi_{t-}^j=l$, see  \eqref{chi-def}), and when their locations $X_t^i,X_t^j$ are sufficiently close (if $\Phi_R$ has a compact support).

As time progresses, a jump $\Xi_{t-}^i\to \Xi_t^i$ occurs when a reaction $(k,l)\xrightarrow{R} (k',l')$ between particle $i$ and 
some other particle $j\neq i$ takes place. Namely, a jump in $\Xi_t^i$ happens at the time $t$ when either a reaction of type-$R$ between the pair $(i,j)$,
or $(j,i)$ occurs, and hence, when either reaction counter $\til E_R^{ij}(t)$ or $\til E^{ji}_R(t)$ jumps.
When that happens, the type $(\Xi_{t-}^i,\Xi_{t-}^j)$ instantly turns into the output $R^+$ of the reaction $R$: either $(k',l')$ or $(l',k')$.
To avoid ambiguity, we enforce the rule of assignment that \emph{if the reaction of type-$R$ occurs between the pair $(i,j)$ of particles,
the type $(\Xi_{t-}^i,\Xi_{t-}^j)$ turns into $(k',l')$ in order} (recall the convention \eqref{convention} that $k'\le l'$).
This also means, if a reaction occurs between the pair $(j,i)$ instead, and hence $(\Xi_{t-}^j,\Xi_{t-}^i)=(k,l)$, it also turns into the same output: $(\Xi_{t}^j,\Xi_{t}^i)=(k',l')$.
Specifically, if $(\Xi_{t-}^i,\Xi_{t-}^j)$ has value $(k,l)$ (resp. $(l,k)$), then the reaction $\til E^{ji}_R$ (resp. $\til E^{ij}_R$) will not fire,
as the rate $\chi_R^-(\Xi_{t-}^j,\Xi_{t-}^i)=0$ (resp., $\chi_R^-(\Xi_{t-}^i,\Xi_{t-}^j)=0$ see \eqref{order}).
Therefore, with the rule of assignment, when a reaction of type-$R$ happens, it always turns types $k\to k'$, $l\to l'$ in ascending order.
In total, the evolution of the type process is described by the following stochastic integrals against the reaction counter processes $\til E^{ij}_R$
\begin{align*}
\Xi^i_t&= \Xi^i_0+ \sum_{\substack{R\in \fr \\ R:(k,l)\to(k',l')}} \sum_{j=1,j\neq i} ^N \sqb{\int_0^t (k'-\Xi_{s-}^i) d \til E^{ij}_R(s)+ \int _0^t (l'-\Xi_{s-}^i)d\til E^{ji}_R(s)}.
\end{align*}

In summary, the dynamics of the process $(\bX^N_t,\bXi^N_t)=\{(X^i_t,\Xi_t^i)\}_{1\le i\le N}$ is described by the following SDE system:
\begin{align}\label{SDE}
\left\{ \begin{array}{rl}
X_t^i&= X_0^i + \displaystyle \int _0^t \si(\Xi^i_s)dB^i_s, \\
\Xi^i_t&=\Xi_0^i +\displaystyle \sum_{\substack{R\in \fr \\ R:(k,l)\to(k',l')} } \sum_{j=1,j\neq i} ^N  \sqb{\int_0^t (k'-\Xi_{s-}^i) d \til E^{ij}_R(s)+ \int _0^t (l'-\Xi_{s-}^i)d\til E^{ji}_R(s)},\\
&\qquad \qquad \mbox{for }i\in\{1,2,\cdots,N\},\\
\til E^{ij}_R(t)&= E^{ij}_R \rb{ \displaystyle  \f 1 {N} \int_0^t \chi^-_R(\Xi_{s-}^i,\Xi_{s-}^j) \Phi_R(X_s^i-X_{s}^j) ds } .
\end{array} \right.
\end{align}
Observe that $\Xi_t^i$ takes values in $\{1,\cdots, n\}$.  The process $t \mapsto \int_0^t (k'-\Xi_{s-}^i) d \til E^{ij}_R(s)$ is 
piecewise constant, making a jump of size $(k' - k)$ only at times when $\Xi^i_t = k$.  With probability one, the jumps in $\tilde E^{ij}_R(t)$ occur at distinct times.

We can also write this equation in vector form. Let $\boldsymbol{B}_t^N=(B_t^1,\cdots, B_t^N)$, 
which is the standard Brownian motion on $\mbx^{N}$, and $\Si_N(\bXi_t^N)$ be the $(N\times N)$-diagonal matrix with diagonal entries $\si(\Xi_t^i)$, for $1\le i\le N$.
Introduce also the function $\Ta ^{ij}_R:\fs^N\to \fs^N$ by
\begin{align}\label{Ta-def}
\Ta_R^{ij}(\bxi_N)=\Ta_R^{ij}(\xi_1,\cdots, \xi_N)=(\til \xi_1,\cdots, \til \xi_N),\qquad
\til \xi_m&=\left\{ \begin{array}{ll}
k'& \mbox{ if }m=i,\\
l'& \mbox{ if }m=j,\\
\xi_l& \mbox{else}.
\end{array} \right.
\end{align}
That is, $\Ta_R^{ij}(\bxi_N)$ is obtained by changing only the $i$- and $j$-coordinate of $\bxi_N$ from $\xi_i,\xi_j$ to 
the output $k',l'$ of the reaction $R$ respectively, while leaving the other coordinates unchanged.
For later use, we also introduce $\til \Ta_R^{ij}:\fs^N\to \fs^N$ the ``reverse" of the map $\Ta_R^{ij}$, which turns the $i,j$-coordinates into the input $k,l$ of $R$:
\begin{align}\label{Tta-def}
\til \Ta_R^{ij}(\bxi_N)=\til\Ta_R^{ij}(\xi_1,\cdots, \xi_N)=(\til \xi_1,\cdots, \til \xi_N),\quad \til \xi_m=\left\{ \begin{array}{ll}
k& \mbox{if }m=i,\\
l& \mbox{if }m=j,\\
\xi_m& \mbox{otherwise}.
\end{array} \right.
\end{align}
Then the vector form of \eqref{SDE} is written as follows:
\begin{align}\label{SDE-V}
\left\{ \begin{array}{ll}
\bX_t^N&= \bX_0^N+ \displaystyle \int_0^t \Si_N(\bXi_s^N)d\boldsymbol{B}_s^N,\\
\bXi_t^N&= \bXi_0^N+\displaystyle \sum_{R\in \fr} \sum_{i,j=1,i\neq j}^N \int_0^t [\Ta^{ij}_R(\bXi_{s-}^N)-\bXi_{s-}^N]dE_R^{ij}(s).
\end{array} \right.
\end{align}

\begin{rem}
	Before ending this subsection, let us make one comment about the rule of assignment. 
	By our construction, whenever a reaction $R:(k,l)\to(k',l')$ takes place, it always turns
	$k\to k', l\to l'$, following the ascending order of the input and output.
	Perhaps a more realistic situation will be turning the input into a prescribed choice of output. 
	For instance, when considering a reaction $R:(1,2)\to(2,3)$, it is more natural to have $1\to3$, while 2 remains $2\to2$, 
	instead of following the ascending order $1\to 2,2\to3$. To be precise, for those reactions $R:(k,l)\to(k',l')$ with distinct inputs $k\neq l$, 
	we prescribe a surjective assignment $\ta_R:\{k,l\}\to\{k',l'\}$.
	Whenever a reaction of type-$R$ occurs, it turns $k\to \ta_R(k)$, and $l\to \ta_R(l)$. If $k=l$, 
	we then switch the types into $(k',l')$ or $(l',k')$, each with probability $\f 12$.
	This is equivalent to first fix an order of the right hand side (the output) of \eqref{chem-reac} for each $R$, namely,
	\begin{align*}
	k+l\xrightarrow{R}\ta_R(k)+\ta_R(l),
	\end{align*}
	(the order of the output does not matter when $k=l$).
	Then a reaction turns the input $(k,l)$ into the output, following the order of the right hand side above.
	One may construct the dynamics with this rule, and of course the result of propagation of chaos still holds,
	except with a minor difference in the mean field limit equation \eqref{MFE} (more precisely, the definition of $\bar T_R^+$ in \eqref{TR-def}).
\end{rem}

\subsection{Notions of entropy and chaos.}
Before stating our main results, we define some notions of entropy and chaos.
For a generic Polish space $\Pi$, let $\mathcal{P}(\Pi)$ be the family of probability measures on $\Pi$. For $N \geq 2$, we denote by
$\mathcal{P}_{sym}(\Pi^N)$ the set of symmetric probability measures on the product space $\Pi^N $, that is the set of laws
of exchangeable $\Pi^N$-valued random variables. Given a symmetric probability density $\rho_N \in \mathcal{P}_{sym} (\Pi^N)$,
let us denote by $\rho_{k,N} \in \mathcal{P}(\Pi^k)$, $1\leq k \leq N$, the $k$-marginal of $\rho_N$.
For two probability measures $\mu, \nu \in \mathcal{P}(\Pi)$, the \emph{relative entropy of $\mu$ with respect to $\nu$} is defined as
$$
\mathcal{H}(\mu \| \nu) = \mathbb{E}^{\mu} \log \Big(\frac{d\mu}{d\nu}\Big).
$$
If $\mu$ and $\nu$ have densities $f,g$ with respect to a measure $m$ on $\Pi$, then
\begin{align}\label{RE1}
\mch(\mu\|\nu)&= \int_{\Pi} f(y)\log\rb{\f {f(y)}{g(y)} }  dm(y).
\end{align}
For two symmetric probability measures $\rho_N, \bar{\rho}_N \in \mathcal{P}_{sym} (\Pi^N)$ with $k$-marginals
$\rho_{k,N} $ and $ \bar{\rho}_{k,N}$, we also define the following \emph{normalized relative entropies}
$$
\mch_N(\rho_N \| \bar{\rho}_N)  := \frac{1}{N}\mch(\rho_N \| \bar{\rho}_N), \quad
\mch_k(\rho_N \| \bar{\rho}_N)  := \frac{1}{k}\mch(\rho_{k,N} \| \bar{\rho}_{k,N}).
$$

\begin{definition}[Kac's chaos]\label{defn:kac}
	Let $\rho_N \in \mathcal{P}_{sym}(\Pi^N)$ be a sequence of symmetric probability measures on $\Pi^N, N\geq 1$ and let $\bar \rho\in \mathcal{P}(\Pi)$.
	We say $\rho_N$ is \emph{$\bar \rho$-Kac's chaotic}
	if one of the following three equivalent conditions holds:

	(i) the sequence of two marginals $\rho_{2,N} \rightharpoonup \bar\rho\otimes \bar \rho$ (converges weakly) as $N \rightarrow \infty$;

	(ii) for all $k \geq 1$ fixed,
	\begin{align*}
	\rho_{k,N} \rightharpoonup \bar \rho^{\otimes k} \text{ on } \Pi^k \text{ as } N\rightarrow \infty;
	\end{align*}

	(iii) the law $\hat{\rho}_N$ of the empirical measure $\mu^N_{\bX^N}$ associated to $\bX^N = \{X^i\}_{i=1}^N \sim \rho_N$ satisfies
	$$
	\hat{\rho}_N \rightharpoonup \delta_{\bar \rho} \text{ in } \mathcal{P} (\mathcal{P}(E)) \text{ as } N\rightarrow \infty.
	$$
\end{definition}
The first notion of chaos given in Definition \ref{defn:kac} (ii) was defined by Kac \cite[Section 3]{Kac56}.
Sznitman \cite{Sznitman91} proved the equivalence of the three formulations above. We also refer to \cite[Theorem 1.2]{HM14}
for more quantitative
statements about Kac's chaos.

\commentout{
\begin{definition}[Propagation of Kac's chaos]
	Let $\rho_N(t)$ be the probability density of a $\Pi^N$-valued process $\bY_t$  (e.g., a solution of \eqref{SDE-V}), 
	with initial distribution $\rho_N(0)$.  Let $\bar \rho(t):[0,T] \to \mathcal{P}(\Pi)$.	Assume that the sequence $\rho_N(0)$ is $\bar{\rho}_0$-Kac's chaotic. Then we say
	that the dynamics satisfies the \emph{propagation of chaos property
	on $[0,T]$}
	if for every $t \in [0,T]$, $\rho_N(t)$ is  $\bar{\rho}(t)$-Kac's chaotic.
\end{definition}
}
The primary goal of this paper is to prove that the law of the chemical reaction
system \eqref{SDE-V} is Kac's chaotic for all $t \geq 0$. We will achieve this goal by proving a quantitative bound on the normalized relative entropy between the joint distribution of the $N$-particles and the tensorized law of the mean field limit; see Theorem \ref{main}.

\subsection{Main result}
The process $\{(\bX_t^N,\bXi_t^N)\}_{t\ge 0}$ defines a Markov process on the state space $\Pi^N$, whose generator $\bL_N$ and adjoint $\bL_N^*$ will be determined in the next section.
Thus, the joint probability distribution $\rho_N(t)$ (for $t\ge 0$) of the process $(\bX_t^N,\bXi^N_t)$, which is a probability measure on $\Pi^N$, satisfies the Fokker-Planck equation
\begin{align*}
\partial _t \rho_N = \bL_N^* \rho_N, \qquad  t\in (0,\infty).
\end{align*}
This equation is in fact a linear parabolic system.
From the theory of parabolic equations, $\rho_N$ admits a smooth (in $\bx_N$) density w.r.t. the measure $m^N$, provided that the initial data $\rho(0)$ admits
a nonnegative $L^1$-density; see Section \ref{appen}, specifically, Proposition \ref{well-posed}.
Abusing notation, we will use the same symbol $\rho_N(t,\cdot)$ to denote its density.

Our main objective is to show, as $N\to\infty$, the joint distribution $\rho_N$ converges, in some sense, to the tensorization $\bar \rho_N=\bar \rho^{\otimes N}$ of the solution of the mean field limit equation $\bar \rho$.
The mean field limit equation of this stochastic system, which will be formally derived in Section \ref{generator}, is the following \emph{nonlocal reaction-diffusion system}
\begin{align} \label{limitsystem}
\left\{ \begin{array}{ll}
\partial _t u_\xi &= \displaystyle  \f {\si(\xi)^2}{2}\De u_\xi + F_\xi^- +F_\xi^+, \\
F_\xi^-&= -\displaystyle \sum_{\substack{R\in \fr: R^-=\{k,l\}}} [\de_{\xi k}(\Phi_R*u_l)u_k+ \de_{\xi l}(\Phi_R*u_k)u_l],\\
F_\xi^+&= \displaystyle \sum_{\substack{R\in \fr\\ R:(k,l)\to(k',l') } }[\de_{\xi k'}  \Phi_R* u_{l})u_{k}+ \de_{\xi l'}(\Phi_R* u_{k})u_{l}], \\
u_\xi(0,x)&=u_{0,\xi}(x),\qquad \qquad 1\le \xi\le n,
\end{array} \right.
\end{align}
where $\bar \rho=(u_1,\cdots, u_n)$, namely, $\bar \rho(t,x,\xi )=u_\xi(t,x)$ for $1\le \xi \le n$, and $\de_{\xi l}$ denotes the Dirac delta, and the operator $\De$ is
the Laplacian in the spatial coordinate $x$.  The initial data $\bar \rho_0=(u_{0,1},u_{0,2},\cdots, u_{0,n})$ is a probability density (with respect to $m$) on $\Pi$, namely, 
\begin{align}\label{initial}
\bar \rho_0(x,\xi)=u_{0,\xi}(x)\ge 0\, \mbox{ for }1\le \xi\le n,\qquad \int_{\Pi}\bar \rho_0(y)dm(y)= \sum_{\xi=1}^n\int_{\mathbb{X} } u_{0,\xi}(x)dx=1.
\end{align}
Alternatively, $\bar \rho$ satisfies the equation
\begin{align}\label{MFE}
\left\{ \begin{array}{ll}
\displaystyle \partial _t \bar \rho = \f{\si(\xi)^2}{2}\De \bar \rho +\bar T(\bar \rho) ,& (t,x,\xi)\in(0,\infty)\times \mbx\times \fs,\\
\bar \rho(0,x,\xi)= \bar\rho_0(x,\xi),&
\end{array} \right.
\end{align}
where $\bar T$ is the (nonlinear) operator
\begin{align}\label{T-def}
\bar T&= \sum_{R\in \fr} \bar T_R=\sum_{R\in \fr}(\bar T_R^-+\bar T_R^+),
\end{align}
and $\bar T_R^\pm$, for a reaction $R\in \fr$ with $(k,l)\xrightarrow{R} (k',l')$, is given by
\begin{align}\label{TR-def}
\bar T_R^-(\bar \rho)(x,\xi)&= -\sqb{ \chi_{k}(\xi)(\Phi_R*\bar \rho)(x,l)\bar \rho(x,k) + \chi_l(\xi)(\Phi_R*\bar \rho)(x,k)\bar \rho(x,l) },\\
\bar T_R^+(\bar \rho)(x,\xi)&=\chi_{k'}(\xi)(\Phi_R*\bar \rho)(x,l) \bar \rho(x,k)+ \chi_{l'}(\xi)(\Phi_R*\bar \rho)(x,k)\bar \rho(x,l) . \nonumber
\end{align}
This nonlinear parabolic system \eqref{limitsystem} is a regularized version of \emph{the local reaction-diffusion system} (\eqref{limitsystem}
with the Dirac delta measure $\la _R\de_0$, $\la_R>0$, in place of $\Phi_R$).
The system is in fact globally well-posed for every nonnegative initial data $\bar \rho_0\in L^1(\Pi)$ from \eqref{initial}. Moreover, these solutions are nonnegative and regular, with the total mass $\int_{\Pi} \bar \rho(t,y)dm(y)$ conserved in time. Specifically, solutions are regular in the sense that
\begin{align*}
\bar \rho\in C([0,\infty);L^1(\Pi ))\cap C((0,\infty);W^{2,p}(\Pi))\cap C^1((0,\infty);L^p(\Pi)),\quad  \forall p\in [1,\infty).
\end{align*}
Thus, the solution $\{\bar \rho(t,\cdot)\}_{t\ge 0}$ is a time-dependent probability density on $\Pi$.
The proof of these results (well-posedness, regularity) will be presented in the appendix (Section \ref{appen}).

Let $\bar \rho_N= \bar \rho^{\otimes N}$ be the tensorized law of $\bar \rho$, namely,
\begin{align*}
\bar \rho_N(t,\by_N)&= \prod_{i=1}^N \bar \rho(t,y_i),\quad \by_N=(y_1,\cdots, y_N)\in \Pi^N.
\end{align*}
Again, our goal is to show that the distribution $\rho_N$ of $(\bX_t^N,\bXi^N_t)$ converges to $\bar \rho_N$ in some appropriate sense.
To this end, we consider the renormalized relative entropy between $\rho_N,\bar\rho_N$:
\begin{align}\label{RE}
W_N(t)&= \mch_N(\rho_N\|\bar \rho_N)(t):= \f 1 N \int_{\Pi^N} \rho_N(t,\by_N)\log\rb{\frac{\rho_N(t,\by_N)}{\bar\rho_N(t,\by_N)} }  dm^N(\by_N),
\end{align}
(cf. \eqref{RE1}).
The main result of this work, stated as follows, 
shows that this quantity vanishes  uniformly on $[0,T]$ for any $T>0$ as $N\to\infty$, provided $W_N(0)\to0$.

\begin{theorem}\label{main}
	Let $d\ge 1$, $(\fs,\fc,\fr)$ be a bimolecular chemical reaction network, and $\mbx=\mathbb{T}^d$.
	Let $\si$, $\{\Phi_R\}_{R\in \fr}$ be the diffusion coefficients and reaction kernels from \eqref{si-def}, \eqref{Phi-def}.
	For $N\ge 2$, let $\rho_N$ be the law of the process $\{(\bX_t^N,\bXi_t^N)\}$ described by \eqref{SDE} with $\rho_N(0) \in \mathcal{P}_{sym}(\Pi^N)$. Let $\bar \rho_N=\bar \rho^{\otimes N}$,
	where $\bar \rho$ is the unique global solution to the mean field limit equation \eqref{MFE} with initial data $\bar \rho_0$ from \eqref{initial} 
	(guaranteed by Proposition \ref{well-posed}). 
	
	If $\bar \rho_0\in L^\infty(\Pi)$, $\inf_{(x,\xi)\in \Pi} \bar \rho_0(x, \xi)>0$, then for every $T>0$, there exists a 
	constant $\Lambda_T>0$ depending on  $\bar \rho_0$ such that the following estimate holds with $\|\Phi\|_{L^\infty}=\sum_{R\in \fr}\|\Phi_R\|_{L^\infty}$:
	\begin{align}
	\mch_N(\rho_N\|\bar \rho_N)(t)\le \frac{\left(e^{t \Lambda_T\|\Phi\|_{L^\infty}}  - 1\right)}{N}  +  e^{t \Lambda_T\|\Phi\|_{L^\infty}}  \mch_N(\rho_N\|\bar\rho_N)(0), \quad \forall \;\; t \in [0,T]. \label{RE-bound}
	\end{align}
\end{theorem}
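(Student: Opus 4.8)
The plan is to derive the first-order differential inequality
\[
\frac{d}{dt}W_N(t)\ \le\ \Lambda_T\|\Phi\|_{L^\infty}\Big(W_N(t)+\tfrac1N\Big),\qquad t\in[0,T],
\]
and then close the estimate by Gr\"onwall's lemma: solving $w'=a(w+\tfrac1N)$ with $a=\Lambda_T\|\Phi\|_{L^\infty}$ and $w(0)=W_N(0)$ reproduces exactly the bound \eqref{RE-bound}. First I would differentiate the normalized relative entropy. Writing $W_N=\tfrac1N\mch(\rho_N\|\bar\rho_N)$, using the Fokker--Planck equation $\partial_t\rho_N=\bL_N^*\rho_N$ together with conservation of mass (so that the $\int\rho_N\,\partial_t\log\rho_N$ term drops), one obtains
\[
\frac{d}{dt}W_N=\frac1N\int_{\Pi^N}\rho_N\,\bL_N\log\frac{\rho_N}{\bar\rho_N}\,dm^N-\frac1N\int_{\Pi^N}\rho_N\,\frac{\partial_t\bar\rho_N}{\bar\rho_N}\,dm^N,
\]
where the last term is evaluated using $\bar\rho_N=\bar\rho^{\otimes N}$ and the mean-field equation \eqref{MFE}, giving $\partial_t\bar\rho_N/\bar\rho_N=\sum_i\bar\rho(y_i)^{-1}\big(\tfrac{\si(\xi_i)^2}{2}\De\bar\rho(y_i)+\bar T(\bar\rho)(y_i)\big)$.

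Next I would split $\bL_N=\bL_N^{\mathrm{diff}}+\bL_N^{\mathrm{jump}}$ and treat the pieces separately. The diffusion part, after integration by parts against $\rho_N$, yields a non-positive Fisher-information term that can simply be discarded (bounded above by $0$), while the matching single-particle Laplacian terms from $\partial_t\bar\rho_N/\bar\rho_N$ cancel against it up to that good term. The substance lies in the reaction part: I would expand $\bL_N^{\mathrm{jump}}\log(\rho_N/\bar\rho_N)$ through the maps $\Ta_R^{ij}$, add and subtract the mean-field rate, and organize the terms so that those in which the empirical interaction $\tfrac1N\sum_{j\ne i}\chi_R^-(\xi_i,\xi_j)\Phi_R(x_i-x_j)$ agrees with its mean-field value $(\Phi_R*\bar\rho)$ cancel against the reaction term $\bar T(\bar\rho)$ in $\partial_t\bar\rho_N/\bar\rho_N$. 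What survives is a \emph{fluctuation remainder} of the form $\tfrac1N\int\rho_N\sum_i\big[\tfrac1N\sum_{j\ne i}f_R(y_i,y_j)\big]\psi_i\,dm^N$, where, with $y=(x,\xi)$ and $z=(x',\xi')$,
\[
f_R(y,z)=\chi_l(\xi')\Phi_R(x-x')-(\Phi_R*u_l)(x),\qquad \int_\Pi f_R(y,z)\,\bar\rho(z)\,dm(z)=0,
\]
and $\psi_i$ is a bounded factor assembled from $\log(\rho_N/\bar\rho_N)$ and the elementary jump increments.

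To control this remainder I would apply the Gibbs (Donsker--Varadhan) variational inequality $\int G\,d\rho_N\le\mch(\rho_N\|\bar\rho_N)+\log\int e^{G}\,d\bar\rho_N$ with $G=\eta\,\tfrac1N\sum_{i\ne j}f_R(y_i,y_j)$ for a small $\eta>0$, bounding the remainder by $\eta^{-1}\big(W_N+\tfrac1N\log\mbe_{\bar\rho_N}[e^{\frac{1}{N}\sum_{i\ne j}f_R}]\big)$. The exponential moment is then estimated uniformly in $N$ by the large-deviation inequality of Lemma \ref{Concen}, whose hypotheses are exactly the centering $\int_\Pi f_R(y,\cdot)\bar\rho\,dm=0$ and the boundedness of $f_R$ (guaranteed by $\Phi_R\in L^\infty$ and the upper bound on $\bar\rho$). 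The positivity $\inf\bar\rho_0>0$, propagated to $\inf_{[0,T]\times\Pi}\bar\rho>0$ via Proposition \ref{well-posed}, keeps the factors $\psi_i$ and $\bar\rho^{-1}$ bounded; it is precisely through the resulting upper bound and positive lower bound on $\bar\rho$ over $[0,T]$ that the constant $\Lambda_T$ is defined. Collecting these estimates gives the displayed differential inequality.

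The main obstacle I anticipate is the bookkeeping of the jump part. Unlike the diffusive McKean--Vlasov setting of \cite{JW18}, the interaction here enters through discrete type-changes, so one must track each reaction channel $R$ and each ordered pair $(i,j)$, verify that the ``consistent'' terms cancel exactly against $\bar T(\bar\rho)$, and check that the leftover fluctuation has the centered, bounded form demanded by Lemma \ref{Concen}. Confirming the cancellation in the correct (second) variable, and ensuring that no uncontrolled diagonal $O(1/N)$ contributions from the $i=j$ exclusion survive, will be the delicate point; once the remainder is placed in the required form, the concentration estimate and Gr\"onwall finish the argument.
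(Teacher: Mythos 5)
Your skeleton---differentiate $W_N$, discard a nonpositive dissipation, bound the remainder via the Gibbs variational inequality plus Lemma \ref{Concen}, then apply Gr\"onwall---is indeed the paper's strategy, and your Gr\"onwall closure and the role of $\inf\bar\rho_0>0$ are right. However, the core of your argument, the treatment of the jump part, has two genuine gaps. First, your ``fluctuation remainder'' carries factors $\psi_i$ assembled from $\log(\rho_N/\bar\rho_N)$ and its jump increments. These are neither bounded nor independent of the unknown density $\rho_N$, so you cannot invoke the variational inequality with $G=\eta N^{-1}\sum_{i\neq j}f_R(y_i,y_j)$ and silently drop the $\psi_i$: that inequality bounds $\int \Psi\, d\rho_N$ only for the very function $\Psi$ whose exponential moment under $\bar\rho_N$ you estimate, and here $\Psi$ would have to contain the $\psi_i$. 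The paper avoids this structurally: \emph{all} terms containing $\log(\rho_N/\bar\rho_N)$ are grouped with $\bL_N^*\bar\rho_N/\bar\rho_N$ (not with $\bar\bT_N\bar\rho_N/\bar\rho_N$) into the dissipation $D(t)$ of \eqref{D-def}, which is shown to be nonpositive \emph{as a whole} by the data processing inequality for the common Markov semigroup (Lemma \ref{lem-REineq}); alternatively one can get the sign of the jump dissipation from $\log a-\log b\le a/b-1$ together with the change of variables used in Proposition \ref{prop:Genadj}. The surviving remainder $G(t)$ then involves only $(\bar\bT_N-\bS_N^*)\bar\rho_N/\bar\rho_N$, a function of $\bar\rho$ alone, bounded once $\bar\rho$ is bounded above and below on $[0,T]$; no log-ratio of the unknown density ever reaches the concentration step.

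Second, your $f_R(y,z)=\chi_l(\xi')\Phi_R(x-x')-(\Phi_R*u_l)(x)$ is centered only in the second variable, whereas Lemma \ref{Concen} requires \emph{both} marginal mean-zero conditions \eqref{eq:center}, and both are necessary: take $f(y,z)=g(z)$ with $\int_\Pi g\,\bar\rho\, dm=0$, $g\not\equiv 0$; it satisfies your one-sided condition, yet $\frac1N\sum_{i\neq j}f(Y_i,Y_j)=\frac{N-1}{N}\sum_j g(Y_j)$ and the exponential moment grows like $\left(\mbe\, e^{cg}\right)^N\to\infty$, so \eqref{exp-mom} fails. This is precisely why the paper's $f_t$ in \eqref{F-def} is assembled as $A_t^R(y_j)+\hat A_t^R(y_i)-B_t^R(y_i,y_j)$, where $B_t^R$ contains the density ratios $u_k u_l/(\bar\rho\,\bar\rho)$: the combination is engineered so that both conditional means vanish (verified in a dedicated lemma), and the leftover single-index sums form the harmless diagonal term $G_2\le K_t/N$. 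Until your decomposition produces a remainder that is free of $\rho_N$-dependence and two-sidedly centered, the concentration estimate---and hence the differential inequality you plan to feed into Gr\"onwall---cannot be established.
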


As a direct consequence of the main theorem, we have the propagation of chaos property for the $N$-particle system \eqref{SDE}.
\begin{corollary}
	[Propagation of chaos]
	Assume the same settings as Theorem \ref{main}.
	If $\mch_N(\rho_N\|\bar\rho_N)(0) \rightarrow 0$ \text{ as } $N \rightarrow \infty$, then for each $t \in [0,T]$, $\rho_N(t)$ is $\bar{\rho}(t)$-Kac's chaotic (c.f. Definition \ref{defn:kac}). 
\end{corollary}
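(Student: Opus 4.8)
The plan is to deduce one of the three equivalent conditions of Definition~\ref{defn:kac} from the quantitative bound \eqref{RE-bound}; it suffices to verify condition (ii), namely that each fixed $k$-marginal $\rho_{k,N}(t)$ converges weakly to $\bar\rho(t)^{\otimes k}$. First I would feed the hypothesis into Theorem~\ref{main}. Fix $t\in[0,T]$. Since $\Lambda_T$ and $\|\Phi\|_{L^\infty}$ do not depend on $N$, the right-hand side of \eqref{RE-bound} is a fixed multiple of $1/N$ plus a fixed multiple of $\mch_N(\rho_N\|\bar\rho_N)(0)$; both terms tend to $0$ as $N\to\infty$ under the assumption $\mch_N(\rho_N\|\bar\rho_N)(0)\to0$. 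Hence
\[
\mch_N(\rho_N\|\bar\rho_N)(t)=\tfrac1N\,\mch\big(\rho_N(t)\,\big\|\,\bar\rho(t)^{\otimes N}\big)\longrightarrow 0\qquad\text{as }N\to\infty.
\]

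The second step passes from the full $N$-particle entropy to the $k$-marginal entropy. Here I would use the standard subadditivity of relative entropy with respect to a tensorized reference measure, which is available precisely because $\rho_N(t)$ is symmetric (exchangeable) and $\bar\rho(t)^{\otimes N}$ is a product. Concretely, partition $\{1,\dots,N\}$ into $m=\lfloor N/k\rfloor$ disjoint blocks of size $k$ and apply the superadditivity $\mch(\mu\|\nu_1\otimes\nu_2)\ge\mch(\mu_1\|\nu_1)+\mch(\mu_2\|\nu_2)$ across these blocks, combined with monotonicity of the relative entropy under taking marginals (to discard the remaining $N-mk$ coordinates). This yields
\[
\mch\big(\rho_N(t)\,\big\|\,\bar\rho(t)^{\otimes N}\big)\ \ge\ m\,\mch\big(\rho_{k,N}(t)\,\big\|\,\bar\rho(t)^{\otimes k}\big),
\]
where by exchangeability every block-marginal of $\rho_N(t)$ equals $\rho_{k,N}(t)$. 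Dividing by $N$ and using $N/m\to k$ for fixed $k$ gives $\mch\big(\rho_{k,N}(t)\,\|\,\bar\rho(t)^{\otimes k}\big)\le (k+o(1))\,\mch_N(\rho_N\|\bar\rho_N)(t)\to0$.

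Finally I would convert entropic convergence into weak convergence by the Csisz\'ar--Kullback--Pinsker inequality,
\[
\big\|\rho_{k,N}(t)-\bar\rho(t)^{\otimes k}\big\|_{\mathrm{TV}}^2\ \le\ 2\,\mch\big(\rho_{k,N}(t)\,\big\|\,\bar\rho(t)^{\otimes k}\big)\ \longrightarrow\ 0 .
\]
Convergence in total variation implies weak convergence $\rho_{k,N}(t)\rightharpoonup\bar\rho(t)^{\otimes k}$ on $\Pi^k$ for every fixed $k\ge1$, which is exactly condition (ii) of Definition~\ref{defn:kac}; by the equivalence of (i)--(iii) recalled there (after Sznitman), $\rho_N(t)$ is $\bar\rho(t)$-Kac's chaotic, as claimed.

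I expect the only delicate point to be the subadditivity step: one must check that the reference measure is genuinely a product so that superadditivity of relative entropy over disjoint blocks applies, and that exchangeability of $\rho_N(t)$ identifies each block-marginal with $\rho_{k,N}(t)$. The loss in the constant (through $N/m\to k$) is harmless, since the corollary only asks for convergence rather than a sharp rate. Everything else---the passage to the limit in \eqref{RE-bound} and the Pinsker step---is routine.
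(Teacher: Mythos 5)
Your proposal is correct and follows essentially the same route as the paper: feed the hypothesis into the bound \eqref{RE-bound}, control the $k$-marginal relative entropy by the normalized $N$-particle entropy, and convert to weak convergence of the marginals via the Csisz\'ar--Kullback--Pinsker inequality \eqref{CKP-ineq}, which gives condition (ii) of Definition \ref{defn:kac}. The block-superadditivity argument you sketch is exactly the standard proof of the monotonicity inequality $\mch_k(\rho_{k,N}\,\|\,\bar\rho^{\otimes k})\le \mch_N(\rho_N\,\|\,\bar\rho^{\otimes N})$ that the paper simply cites from \cite{wang2017mean}; the only step you leave implicit is that the exchangeability of $\rho_N(t)$ for $t>0$ (needed to identify each block marginal with $\rho_{k,N}(t)$) is not assumed but propagated from the symmetric initial data by Lemma \ref{lem:exch}.
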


\begin{proof}
	First, by Lemma \ref{lem:exch}, the probability distribution $\rho_N(t)$ is symmetric for any $t\in [0,T]$. Therefore, thanks to the monotonicity \cite{wang2017mean} of the normalized relative entropy:
	$$
	\mathcal{H}_k(\rho_{k, N} \| \bar\rho^{\otimes k} ) \leq \mathcal{H}_N (\rho_N \|  \bar \rho^{\otimes N}), \quad \quad 1\leq k \leq N
	$$
	and the \emph{Csisz\'ar-Kullback-Pinsker inequality} \cite[Remark 22.12]{Villani09}:
	\begin{align}\label{CKP-ineq}
	\|f_1 - f_2\|_{L^1(\Pi)} \leq \sqrt{2\mch(f_1 \| f_2)}, \quad \forall f_i\in \mathcal{P}(\Pi),
	\end{align}
	one obtains by \eqref{RE-bound} and assumption that for any $k \geq 1$,
	\begin{align*}
	\sup_{t\in [0,T]} 	\|\rho_{k,N}(t) - \bar \rho^{\otimes k}(t)\|_{L^1(\Pi^k)} & \leq \sqrt{2k\cdot \mathcal{H}_k (\rho_{k,N} \|\bar \rho^{\otimes N})(t)}
	 \leq \sqrt{2k \cdot \mch_N (\rho_N | \bar \rho^{\otimes N})(t)}\\
	& \leq C(T,k) \sqrt{ \frac{1}{N} + \mch_N(\rho_N\|\bar\rho_N)(0)} \rightarrow 0   \text{ as } N\rightarrow \infty. \qedhere
\end{align*}
\end{proof}

\begin{rem}
1. With additional assumptions imposed for the mean field limit $\bar \rho$, one may in fact extend the main result to 
the whole space setting $\mbx=\mbr^d$; for example, the following: it holds for some $T,C_T>0$ that:
\begin{align}\label{comparability}
	\max_{R:(k,l)\to(k',l')} \sup_{(t,x)\in [0,T]\times \mbx}\cb{\frac{\bar \rho(t,x,k)}{\bar \rho(t,x,k')},\frac{\bar \rho(t,x,l)}{\bar \rho(t,x,l')} }\le C_T.
\end{align}
Although unnatural, this comparability assumption is critical in establishing the bound \eqref{RE-bound} (as $\Lambda_T$ depends on $C_T$). 
Indeed, in the case of torus $\mbx=\mathbb{T}^d$, the condition \eqref{comparability} alone, which is weaker than the assumption $\inf_{(x,\xi)\in \Pi}\bar \rho_0(x,\xi)>0$ 
imposed by Theorem \ref{main} (as a consequence of the maximum principle), is sufficient to imply the main result. 
Of course, to extend the result to $\mbr^d$, one may need more assumptions for $\bar \rho$, for instance,
some appropriate regularity and decay at infinity condition for $\bar \rho$, so that Lemma \ref{lem-REineq} holds. 
We are by no means to list down these precise assumptions, and we leave it to the interested reader.

\medskip
2. Since the relative entropy bound \eqref{RE-bound} from Theorem \ref{main} has an explicit dependence in the $L^\infty$-norm of reaction kernels, 
one may use that to obtain the convergence to mean field limits for local reaction-diffusion equations.
That is, we consider the stochastic system \eqref{SDE} with the reaction kernels $\Phi_R$ scaled according the the number of particles $N$.
More precisely, given $r>0$, let $\Phi_R^r\in L^1(\mbx)$ be the $L^1$-rescaled of the reaction kernel $\Phi_R$, namely,
\begin{align*}
\Phi_R^r(x)=r^{-d} \Phi_R\rb{r^{-1}x}.
\end{align*}
Let $\{(\bX^{N,r}_t,\bXi^{N,r}_t)\}_{t\ge0}$ satisfy the system \eqref{SDE-V} with $\Phi_R^r$ in place of $\Phi_R$, $\rho_N^r(t)$ 
be its distribution, and $\bar \rho^r$ be the solution of \eqref{MFE}, with $\Phi_R^r$ taking the role of $\Phi_R$ in \eqref{TR-def}.
Then the bound \eqref{RE-bound} implies
\begin{align*}
\sup_{t\in [0,T]}\mch_N(\rho_N^r\|(\bar\rho^r)^{\otimes N})(t)\le \frac{1}{N} e^{r^{-d} T \Lambda_T \|\Phi\|_{L^\infty}} + e^{r^{-d}T \Lambda_T \|\Phi\|_{L^\infty}} \mch_N(\rho_N^r\|(\bar\rho^r)^{\otimes N})(0),
\end{align*}
provided that the comparability condition \eqref{comparability} holds with $\bar \rho^r$ in place of $\bar \rho$, for some constant $C_T$ independent of $r>0$, namely,
\begin{align}\label{r-comp}
\sup_{r\in[0,1)} \max_{R:(k,l)\to (k',l')}\sup_{(t,x)\in [0,T]\times \mbx}\cb{\frac{\bar \rho^r(t,x,k)}{\bar \rho^r(t,x,k')},\frac{\bar \rho^r(t,x,l)}{\bar \rho^r(t,x,l')} }\le C_T.
\end{align}
From this bound, if we choose $r=r_N$, so that
\begin{align*}
r_N\searrow 0,\qquad \frac{1}{N}e^{r_N^{-d}T \Lambda_T\|\Phi\|_{L^\infty}}\to 0,\quad \mbox{ as }N\to\infty
\end{align*}
(specifically, $r_N\gg (\log N)^{-1/d}$), then the propagation of chaos holds for the dynamics $\{(\bX^{N,r_N}_t,\bXi^{N,r_N}_t)\}_{t\ge0}$, assuming $\mch_N(\rho_N^r\|(\bar\rho^r)^{\otimes N})(0) \leq O(N^{-1})$. In particular, if the initial $N$ particles are sampled independently from the law $\bar \rho^r(0)$, then  $\rho_N^r(0) = (\bar\rho^r)^{\otimes N}(0)$ and $\mch_N(\rho_N^r\|(\bar\rho^r)^{\otimes N})(0) = 0$. In this case, as $N\to\infty$ the empirical measure \eqref{emp-meas} converges to a solution $\bar \rho$ of the local 
chemical reaction-diffusion system \eqref{limitsystem} with $\Phi_R=\de_0$ (the Dirac delta measure at origin).
Of course, all the claims made here are based on the assumptions of \eqref{r-comp}, and the $L^1$ convergence of $\bar \rho^r\to \bar \rho$ as $r\searrow 0$.
We leave the justification of these assumptions and other details to the interested reader.
See also \cite{FLO19} for discussion of this issue in the context of different particle systems. 
\end{rem}

\section{Infinitesimal Generator of the Process $\{(\bX_t^N,\bXi_t^N)\}$} \label{generator}

The main objective of this section is to determine the infinitesimal generator $\bL_N$ of the process $\{(\bX_t^N,\bXi_t^N)\}_{t\ge 0}$, 
and subsequently, its adjoint operator $\bL_N^*$ with respect to the inner product $\langle \cdot,\cdot \rangle$ in $L^2(\Pi^N)$.
Throughout this section, $N\ge 1$ will be fixed.
For the sake of notational simplicity, we will suppress the subscript $N$ for $\by= \by_N=(\bx_N,\bxi_N)=(\bx,\bxi)\in \Pi^N$, $\rho=\rho_N$, 
and the superscript for $\bX_t=\bX^N_t,\bXi_t=\bXi^N_t$ in the computations involved.

\subsection{Generator of the process}

The generator of the process $(\bX_t,\bXi_t)$ has two components, corresponding to continuous diffusion (in $\bx$) and to jumps in the 
$\bxi$ coordinate (discrete change of type).  For $\varphi \in C_0^2(\Pi^N)$ ($C^2$ in $\bx$), let $\boldsymbol{\De}_N$ denote the spatial diffusion operator (in $\bx$) on $\Pi^N$:
\begin{align}\label{Ga-def}
\bDe_N(\varphi)(\by_N)&:=\f 12  \sum_{i=1}^N \si(\xi_i)^2 \De_{x_i}\varphi(\by_N).
\end{align}
Next, recall $\Ta^{ij}_R$ from \eqref{Ta-def}, and 
define linear operators $\bS_{N,R}^{ij},\bS_{N,R}, \bS_{N}:C_0(\Pi^N)\to C_0(\Pi^N)$
\begin{align}\label{S'-def}
(\bS_{N,R}^{ij}\varphi)(\by_N)&:=\chi_R^-(\xi_i,\xi_j)\Phi_R(x_i-x_j)[\varphi(\bx_N,\Ta_R^{ij}\bxi_N)-\varphi(\by_N)] \\
\bS_N&:= \sum_{R\in \fr}\bS_{N,R}:= \sum_{R\in \fr}  \sqb{\f 1 {N}\sum_{i=1}^N\sum_{j=1,j\neq i}^N \bS_{N,R}^{ij}}. \nonumber 
\end{align}

\begin{proposition} \label{prop:Gen}
	$(\bX_t^N,\bXi_t^N)$ defined by the $N$-particles system \eqref{SDE} is a Markov process on $\Pi^N$ with infinitesimal generator
	$\bL_N=\bDe_N+\bS_N$, where $\bDe_N,\bS_N$ are from \eqref{Ga-def}, \eqref{S'-def}. 
\end{proposition}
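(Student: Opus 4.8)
The plan is to read off the generator from It\^o's formula for jump-diffusion processes applied to $\varphi(\bX_t^N,\bXi_t^N)$, for $\varphi\in C_0^2(\Pi^N)$, and to identify $\bL_N\varphi$ with the drift of the resulting semimartingale. The Markov property comes first and is essentially free: the diffusion coefficients $\si(\Xi_s^i)$ and the jump intensities $\tfrac1N\chi_R^-(\Xi_{s-}^i,\Xi_{s-}^j)\Phi_R(X_s^i-X_s^j)$ in \eqref{SDE-V} depend only on the current state, and the driving Brownian motions $\{B^i\}$ and unit-rate Poisson processes $\{E_R^{ij}\}$ are mutually independent with independent increments. Hence the (pathwise unique) solution of \eqref{SDE-V} is a time-homogeneous strong Markov process on $\Pi^N$ by the standard theory of stochastic equations driven by independent Brownian and Poisson noise, equivalently by the random time-change construction for such systems.

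Next I would apply It\^o's formula and split the increment of $\varphi(\bX_t^N,\bXi_t^N)$ into a continuous part and a jump part. Since $\bXi_t^N$ is pure-jump and each $X_t^i$ is continuous with $dX_t^i=\si(\Xi_t^i)\,dB_t^i$, the continuous contribution is
\[
\sum_{i=1}^N \si(\Xi_t^i)\,\partial_{x_i}\varphi(\bX_t^N,\bXi_t^N)\,dB_t^i \;+\; \tfrac12\sum_{i=1}^N \si(\Xi_t^i)^2\,\De_{x_i}\varphi(\bX_t^N,\bXi_t^N)\,dt,
\]
whose first term is a martingale and whose drift is exactly $\bDe_N\varphi\,dt$ from \eqref{Ga-def}. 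For the jump part, each firing of $\til E_R^{ij}$ changes the type configuration from $\bXi_{t-}^N$ to $\Ta_R^{ij}(\bXi_{t-}^N)$ while $\bX_t^N=\bX_{t-}^N$, producing the increment $\varphi(\bX_t^N,\Ta_R^{ij}\bXi_{t-}^N)-\varphi(\bX_t^N,\bXi_{t-}^N)$. The key computation is the compensator: since $\til E_R^{ij}(t)=E_R^{ij}\!\big(\tfrac1N\int_0^t \chi_R^-(\Xi_{s-}^i,\Xi_{s-}^j)\Phi_R(X_s^i-X_s^j)\,ds\big)$ is a unit-rate Poisson process run along an adapted absolutely continuous time change $\Lambda_R^{ij}$, its predictable intensity is $\tfrac1N\chi_R^-(\Xi_{t-}^i,\Xi_{t-}^j)\Phi_R(X_t^i-X_t^j)$. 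Summing over reactions $R\in\fr$ and ordered pairs $i\neq j$ and compensating each Poisson integral, the accumulated jumps equal a martingale plus the drift
\[
\sum_{R\in\fr}\frac1N\sum_{i=1}^N\sum_{\substack{j=1\\ j\neq i}}^N \chi_R^-(\Xi_t^i,\Xi_t^j)\Phi_R(X_t^i-X_t^j)\big[\varphi(\bX_t^N,\Ta_R^{ij}\bXi_t^N)-\varphi(\bX_t^N,\bXi_t^N)\big]\,dt,
\]
which is precisely $\bS_N\varphi\,dt$ by \eqref{S'-def}.

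Combining the two parts yields Dynkin's formula $\varphi(\bX_t^N,\bXi_t^N)=\varphi(\bX_0^N,\bXi_0^N)+\int_0^t(\bDe_N+\bS_N)\varphi(\bX_s^N,\bXi_s^N)\,ds+M_t$ with $M_t$ a local martingale. Since $\varphi$ is bounded and each $\Phi_R\in L^\infty$, the process $M_t$ is a true martingale and $\bL_N\varphi$ is bounded, so taking expectations, dividing by $t$, and letting $t\to0^+$ gives $\lim_{t\to0^+}t^{-1}(\mbe^{\by_N}[\varphi(\bX_t^N,\bXi_t^N)]-\varphi(\by_N))=(\bDe_N+\bS_N)\varphi(\by_N)$, that is $\bL_N=\bDe_N+\bS_N$ on $C_0^2(\Pi^N)$.

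I expect the main obstacle to be bookkeeping rather than hard analysis. One must check that the compensator computation for the time-changed Poisson processes is rigorous (the time change $\Lambda_R^{ij}(t)$ is adapted and absolutely continuous, so the standard martingale characterization applies), and, more delicately, that the two-counter assignment rule in \eqref{SDE} --- where $\til E_R^{ij}$ and $\til E_R^{ji}$ both act on particle $i$ --- is genuinely equivalent to the single ordered-pair increment $\Ta_R^{ij}(\bXi^N)-\bXi^N$ in \eqref{SDE-V}. Verifying this equivalence (so that each reaction event is counted exactly once, with the ascending-order convention \eqref{convention} built into $\Ta_R^{ij}$) is what guarantees the jump drift matches $\bS_N$ exactly.
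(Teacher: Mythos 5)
Your proof follows essentially the same route as the paper's: It\^o's formula for the jump-diffusion, the splitting into the diffusive drift $\bDe_N\varphi$, a Brownian martingale, and the Poisson-jump part, and the replacement of each stochastic integral against $\til E_R^{ij}$ by its compensator $\tfrac1N\int_0^t \chi_R^-(\Xi_{s-}^i,\Xi_{s-}^j)\Phi_R(X_s^i-X_s^j)\,ds$ --- this compensator identification is exactly the paper's key step, and your bookkeeping of the jump increments (each firing of $\til E_R^{ij}$ producing the move $\bXi_{t-}^N \mapsto \Ta_R^{ij}\bXi_{t-}^N$) matches the paper's. The one place you go beyond the paper is the final step, and it is the only step that is not justified: dividing by $t$ and letting $t\to 0^+$ to claim $\lim_{t\to0^+}t^{-1}\bigl(\mbe^{\by_N}[\varphi(\bX_t^N,\bXi_t^N)]-\varphi(\by_N)\bigr)=(\bDe_N+\bS_N)\varphi(\by_N)$ at \emph{every} $\by_N$. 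This requires $\mbe^{\by_N}[\bS_N\varphi(\bX_s^N,\bXi_s^N)]\to \bS_N\varphi(\by_N)$ as $s\to0^+$, but $\Phi_R$ is only assumed to lie in $L^1\cap L^\infty$ (the paper's motivating example is the indicator $\chi_{\{|x|\le r\}}$), so $\bS_N\varphi$ is bounded measurable and generally discontinuous; at a starting configuration where some $x_i-x_j$ lies on the discontinuity set of $\Phi_R$, the Gaussian averaging of $\bX_s^N$ near $\bx_N$ need not reproduce the pointwise value, and the limit can genuinely differ from $\bS_N\varphi(\by_N)$. The paper avoids this by stopping one step earlier: it takes expectations against an arbitrary initial law and records only the weak-form identity $\inn{\varphi,\rho(t)}-\inn{\varphi,\rho(0)}=\int_0^t\inn{\bL_N\varphi,\rho(s)}ds$, i.e., it identifies $\bL_N$ in the forward-equation/martingale-problem sense, which is all that is needed downstream (it is precisely what feeds the Fokker--Planck equation in Proposition \ref{prop:Genadj}). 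Your Dynkin formula with the true martingale $M_t$ already implies this weak identity, so the repair is simply to delete the pointwise-limit sentence and conclude in the martingale/weak sense; your remaining additions --- the explicit Markov-property justification and the check that the two-counter rule in \eqref{SDE} is equivalent to the single ordered increment in \eqref{SDE-V} --- are sound and consistent with the paper's setup.
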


\begin{proof}

Fix a smooth function $\varphi \in C_0^2(\Pi^N)$ ($C^2$ in $\bx$), and consider $\varphi(\bX_t,\bXi_t)$.
Applying It\^{o}'s formula (see e.g. \cite[Chapter 9.3]{Klebaner12}) to the jump-diffusion equation \eqref{SDE} (see also \eqref{SDE-V}), we have
\begin{align}
\varphi(\bX_t,\bXi_t)&= \varphi(\bX_0,\bXi_0)+ \sum_{i=1}^N\int_0^t \f{\si(\Xi^i_s)^2}2 \De_{x_i} \varphi (\bX_s,\bXi_s) ds +M_t \nonumber \\
&\quad + \sum_{i=1}^N \sqb{\sum_{R\in\fr}\sum_{j\neq i} \int _0^t\sqb{\varphi(\bX_s,\Ta_R^{ij}(\bXi_{s-}))-\varphi(\bX_{s-},\bXi_{s-}) }  d\til E^{ij}_R(s)} , \nonumber \\
&=: \varphi(\bX_0,\bXi_0)+ G_t+ M_t +H_t \label{phi-eq}
\end{align}
where $M_t = \sum_{i=1}^N \int_0^t \sigma(\Xi^i_s) \nabla_{x_i} \varphi(\bX_s, \bXi_s) \cdot dB^i_s$ is a martingale with $M_0=0$. 
Now we take expectation for each term of the identity above.
Since $\rho$ is the law of the process $(\bX_t,\bXi_t)$, the expectation of the left hand side is given by
\begin{align*}
\mbe \varphi(\bX_t,\bXi_t) =\int_{\Pi^N} \varphi(\by) \rho(t,d\by )=\inn{\varphi,\rho(t)}.
\end{align*}
The above also holds for the first term from the right hand side of \eqref{phi-eq} (with $t=0$).
As for the second term $G_t$ of \eqref{phi-eq}, it again follows by the definition of $\rho$ that
\begin{align*}
\mbe G_t&=  \int_0^t \int_{\Pi^N} \sum_{i=1}^N \frac{1}{2} \si(\xi_i)^2 \De_{x_i} \varphi(\by) \rho(s,d\by) ds \\
&= \int_0^t \int_{\Pi^N}(\bDe_N\varphi)(\by)\rho(s,d\by)ds
= \int_0^t \inn{\bDe_N\varphi, \rho(s) }ds .
\end{align*}

Now consider the expectation of the last term $H_t$ in \eqref{phi-eq}, which is given by
\begin{align*}
H_t&=\sum_{R\in \fr} \sum_{i,j=1,i\neq j}^N L_R^{ij}(t),\qquad L_R^{ij}(t):= \int _0^t\sqb{\varphi(\bX_s,\Ta_R^{ij}(\bXi_{s-}))-\varphi(\bX_{s-},\bXi_{s-}) } d\til E^{ij}_R(s).
\end{align*}
Consider $L_R^{ij}(t)$ for a fixed $1\le i,j\le N$ with $i\neq j$ and reaction $(k,l)\xrightarrow{R} (k',l')$ from $\fr$.
Observe that the integrand $\varphi(\bX_{s},\Ta^{ij}_R(\bXi_{s-}))-\varphi(\bX_{s},\bXi_{s-})$ is left-continuous (and hence predictable),
and the integrator $\{\til E^{ij}_R(s)\}_{s\ge 0}$, by \eqref{SDE}, has compensator
\begin{align*}
A^{ij}_R(s)&= \f 1 {N} \int_0^t \chi^-_{R}(\Xi_{s-}^i,\Xi_{s-}^j) \Phi_R(X_s^i-X_s^j)ds.
\end{align*}
Therefore, the expectation of the process $L_R^{ij}(t)$ equals to that of the 
integrand $\varphi(\bX_{s},\Ta_R^{ij}(\bXi_{s-}))-\varphi(\bX_{s},\bXi_{s-})$ integrating against the compensator $A_R^{ij}(s)$. That is,
\begin{align*}
\mbe L_R^{ij}(t)&= \mbe \int_0^t[\varphi(\bX_{s},\Ta_R^{ij}(\bXi_{s-}))-\varphi(\bX_{s}, \bXi_{s-}) ] dA^{ij}_R(s) \\
&=\f 1 {N} \mbe \int_0^t \chi^-_R (\Xi_{s-}^i,\Xi_{s-}^j) [\varphi(\bX_{s},\Ta_R^{ij}(\bXi_{s-}))-\varphi(\bX_{s}, \bXi_{s-}) ]\Phi_R(X_s^i-X_s^j)ds \\
&= \f 1 {N} \int_0^t \int_{\Pi^N} \chi^-_R(\xi_i,\xi_j)\Phi_R(x_i-x_j)[\varphi(\bx,\Ta_R^{ij}(\bxi))-\varphi(\bx,\bxi)]\rho(s,d\by)ds.
\end{align*}

Summing up $\mbe L^{ij}_R(t)$ for $R\in \fr$, and $1\le i,j\le N$ with $i\neq j$, we find that $\mbe H_t$ from \eqref{phi-eq} is given by
\begin{align*}
\mbe H_t&= \sum_{ R\in \fr}\sum_{i,j=1,i\neq j}^N \mbe L^{ij}_R(t)= \f 1 N\sum_{R\in \fr} \sum_{i\neq j}\int_0^t\inn{\bS_{N,R}^{ij} \varphi, \rho(s)}  ds = \int _0^t \inn{\bS_N\varphi,\rho(s)}ds  .
\end{align*}

Combining all the computations for the expectation of \eqref{phi-eq}, we conclude
\begin{align}\label{weak-form}
\inn{\varphi,\rho(s)}\big|_{s=0}^{t} &= \int_0^t \inn {\boldsymbol{\mcll}_N\varphi,\rho(s)} ds ,\quad \boldsymbol{\mcll} _N:=\bDe_N+\bS_N,
\end{align}
which shows that the generator of $\{(\bX_t,\bXi_t)\}_{t\ge 0}$ is $\bL_N$.
\end{proof}

\subsection{The adjoint equation}
Now we turn our attention to the adjoint operator of $\bL_N$. The main result of this subsection is stated as follow.
\begin{proposition} \label{prop:Genadj}
The adjoint of generator $\bL_N$ in the $L^2(\Pi^N)$-sense is given by $\bL_N^*=\bDe_N+\bS_N^*$, where
\begin{align}
\bS_N^*(\psi)(\by_N)&= \f 1 N \sum_{R\in \fr} \sum_{\substack{i,j=1\\i\neq j}}^N  \Phi_R(x_i-x_j)\sqb{ \chi_R^+(\xi_i,\xi_j)\psi(\bx_N,\til\Ta_{R}^{ij}\bxi_N)-\chi_R^-(\xi_i,\xi_j)\psi(\by_N) } .\label{S-def}
\end{align}
The law $\rho_N(t)$ of the process is the unique strong solution to the Fokker-Planck equation with initial data $\rho_N(0)\in L^1(\Pi^N)$:
\begin{align}\label{FP-eq}
\partial _t \rho_N &= \bL_N^*(\rho_N),\qquad (t,\by_N)\in (0,\infty)\times \Pi^N.
\end{align}
Moreover, $\rho_N$ has the following regularity: for any $p\in [1,\infty)$
\begin{align}\label{b-valued}
\rho_N\in C([0,\infty);L^1(\Pi^N))\cap C((0,\infty);W^{2,p}(\Pi^N))\cap C^1((0,\infty);L^p(\Pi^N)).
\end{align}
If additionally $\rho_N(0)\in W^{2,\infty}(\Pi^N)$, then $\rho_N\in C([0,\infty);W^{2,p}(\Pi^N))\cap  C^1([0,\infty);L^p(\Pi^N))$.
\end{proposition}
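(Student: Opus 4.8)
The plan is to treat the three assertions in order: first identify $\bL_N^*$ by directly computing the bilinear form $\inn{\bL_N\varphi,\psi}$, then read off well-posedness and regularity of the Fokker--Planck equation from linear parabolic theory, and finally match the probabilistic law $\rho_N(t)$ with the analytic solution through the weak form \eqref{weak-form}. For the adjoint I would split $\bL_N=\bDe_N+\bS_N$. The diffusion part is self-adjoint: since $\si(\xi_i)$ depends only on the discrete type $\xi_i$ and not on the spatial variable, for each fixed $\bxi_N$ the operator $\bDe_N$ is a constant-coefficient elliptic operator, and two integrations by parts in $x_i$ on the torus (no boundary terms) give $\inn{\bDe_N\varphi,\psi}=\inn{\varphi,\bDe_N\psi}$, so $\bDe_N^*=\bDe_N$.

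For $\bS_N$ it suffices to adjoint each $\bS_{N,R}^{ij}$ from \eqref{S'-def}. The summand $-\chi_R^-(\xi_i,\xi_j)\Phi_R(x_i-x_j)\varphi(\by_N)$ is already a multiplier, contributing $-\chi_R^-(\xi_i,\xi_j)\Phi_R(x_i-x_j)\psi(\by_N)$. The summand containing $\varphi(\bx_N,\Ta_R^{ij}\bxi_N)$ is handled by a change of variables in the two type coordinates $(\xi_i,\xi_j)$ against the counting measure $d\#$: the factor $\chi_R^-$ forces the input $(\xi_i,\xi_j)=(k,l)$, the map $\Ta_R^{ij}$ of \eqref{Ta-def} sends this pair bijectively to the output $(k',l')$, so relabelling the summation variable by the reverse map $\til\Ta_R^{ij}$ of \eqref{Tta-def} turns $\chi_R^-$ into $\chi_R^+$ and moves the composition onto $\psi$, producing $\chi_R^+(\xi_i,\xi_j)\Phi_R(x_i-x_j)\psi(\bx_N,\til\Ta_R^{ij}\bxi_N)$ (the symmetric spatial kernel $\Phi_R(x_i-x_j)$ is untouched by the type substitution). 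Summing over $R\in\fr$ and $i\neq j$ and dividing by $N$ yields exactly \eqref{S-def}. The only point to watch is that the discrete substitution is not a bijection of all of $\fs^N$, but the indicators $\chi_R^\pm$ restrict each sum to the single surviving pair of type values, so no spurious multiplicities appear.

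For well-posedness and regularity I would view \eqref{FP-eq} as a linear parabolic system over the finite family $\fs^N$ of copies of $(\mathbb{T}^d)^N$. The leading operator $\bDe_N$ is, for each fixed $\bxi_N$, a uniformly elliptic constant-coefficient operator (as $\si>0$) generating the ultracontractive heat semigroup on the torus; the coupling $\bS_N^*$ is a bounded operator on every $L^p(\Pi^N)$, $p\in[1,\infty]$, being a finite sum of multiplications by $\Phi_R\in L^\infty$ composed with the type-shift $\psi\mapsto\psi(\bx_N,\til\Ta_R^{ij}\bxi_N)$, which is bounded on $L^p$. Hence $\bL_N^*=\bDe_N+\bS_N^*$ is a bounded perturbation of a generator: on $L^p$ with $1<p<\infty$ it generates an analytic semigroup, and on $L^1$ a $C_0$-semigroup. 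Existence and uniqueness of the strong solution with $\rho_N(0)\in L^1$ follow, along with the continuity $\rho_N\in C([0,\infty);L^1(\Pi^N))$. For $t>0$ the smoothing of the heat semigroup, applied through Duhamel against the bounded term $\bS_N^*$, upgrades $\rho_N(t)$ to $W^{2,p}(\Pi^N)$ and makes $t\mapsto\rho_N(t)$ continuously differentiable into $L^p$, giving \eqref{b-valued}; when $\rho_N(0)\in W^{2,\infty}(\Pi^N)\subset W^{2,p}(\Pi^N)=\dom(\bL_N^*)$ the same regularity holds up to $t=0$. This mirrors the argument for the mean field equation in Proposition \ref{well-posed}, which I would invoke rather than reprove.

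Finally, to identify the law with this solution, recall that Proposition \ref{prop:Gen} shows $\rho_N(t)$ satisfies \eqref{weak-form}, which is precisely the weak (dual) form of \eqref{FP-eq}; testing the analytic strong solution against $\varphi$ and using the adjoint identity shows it solves the same weak equation, and uniqueness in this dual class (using density of $C_0^2$ test functions) forces the two to coincide. I expect the main obstacle to be this identification together with the $L^1$ end of the semigroup theory: the heat semigroup is only $C_0$, not analytic, on $L^1$, so the continuity at $t=0$ and the uniqueness statement must be argued through the smoothing estimates rather than by direct analyticity, and some care is needed to ensure that the probabilistic law, a priori only a measure, indeed carries the $L^1$ density that the Fokker--Planck theory produces.
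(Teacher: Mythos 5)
Your proposal matches the paper's proof essentially step for step: the paper likewise uses self-adjointness of $\bDe_N$, computes $(\bS_{N,R}^{ij})^*$ via exactly the type-relabelling identity you describe (verified there by reducing to $(i,j)=(1,2)$ and summing over the remaining coordinates $\hat\bxi$, with the indicators $\chi_R^\pm$ pinning the surviving type values just as you note), defers existence, uniqueness and regularity to the general parabolic result of Proposition \ref{well-posed} in the appendix, and identifies the law with the PDE solution by the theory of Markov processes (citing Ethier--Kurtz), which is the rigorous form of your weak-formulation matching argument. One immaterial side remark: the heat semigroup on the torus is in fact analytic on $L^1$ as well, but since your argument only invokes the $C_0$ property there, nothing in your proof depends on that claim.
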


\begin{proof}
Observe that $\bDe_N$ is self-adjoint (w.r.t. inner product $\inn{\cdot,\cdot}$ in $L^2(\Pi^N)$), and thus $\bDe_N^*=\bDe_N$.  We now compute the dual of the operator $\bS_N$.
Recall the definition from \eqref{S'-def}.
	To determine the dual of $\bS_N$, we begin with that of $\bS_{N,R}^{ij}$.
	Fix a pair of functions $\varphi,\psi \in C_0(\Pi^N)$.
	Then
	\begin{align*}
	\inn {\bS_{N,R}^{ij}\varphi,\psi}
	 = \int_{\Pi^N} \chi^-_R(\xi_i,\xi_j)\Phi_R(x_i-x_j)[\varphi(\bx,\Ta_R^{ij}\bxi)-\varphi(\by)]\psi(\by)dm^N(\by).
	\end{align*}
	From here, we claim that the following identity holds:
	\begin{align*}
	&\int_{\Pi^N} \chi^-_R(\xi_i,\xi_j)\Phi_R(x_i-x_j)\varphi(\bx,\Ta_R^{ij}\bxi)\psi(\by) dm^N(\by)\\
	&= \int_{\Pi^N} \chi^+_R(\xi_i,\xi_j)\Phi_R(x_i-x_j)\varphi(\by)\psi(\bx,\til \Ta_R^{ij}\bxi) dm^N(\by),
	\end{align*}
	Indeed, to verify this identity, by permuting the indices it suffices to check it for $(i,j)=(1,2)$.
	Write $\bxi=(\xi_1,\xi_2,\hat \bxi)$, with $\hat \bxi=(\xi_3,\xi_4,\cdots, \xi_N)\in \fs^{N-2}$.
	Then by the definitions of $\chi_R^-,\Ta_R^{12},\til \Ta_R^{12}$ (see \eqref{chi-def}, \eqref{Ta-def}, \eqref{Tta-def}),
	and recall also $R\in \fr$ is so that $(k,l)\xrightarrow{R}(k',l')$, we have
	\begin{align*}
	&\int_{\Pi^N} \chi_R^-(\xi_1,\xi_2)\Phi_R(x_1-x_2)\varphi(\bx,\Ta_R^{12}\bxi)\psi(\by,\bxi)dm^N(\by)\\
	&= \sum_{\hat \bxi\in \fs^{N-2}} \int_{\mbx^N}\Phi_R(x_1-x_2)\varphi(\bx,k',l',\hat \bxi)\psi(\bx,k,l,\hat \bxi)d\bx\\
	&= \int_{\Pi^N} \chi_R^+(\xi_1,\xi_2)\Phi_R(x_1-x_2)\varphi(\bx,\bxi)\psi(\bx,\til \Ta_{R}^{12}\bxi)dm^N(\by).
	\end{align*}
	Using the identity we just established, it follows
	\begin{align*}
	\inn{\bS_{N,R}^{ij}\varphi,\psi} &= \int_{\Pi^N} \Phi_R(x_i-x_j)\varphi(\bx,\bxi)\sqb{\chi_R^+(\xi_i,\xi_j)\psi(\bx,\til \Ta_R^{ij}\bxi)-\chi_R^-(\xi_i,\xi_j)\psi(\by)} dm^N(\by).
	\end{align*}
	Thus,
	\begin{align*}
	(\bS_{N,R}^{ij})^*\psi(\by)&= \Phi_R(x_i-x_j)\sqb{\chi_R^+(\xi_i,\xi_j)\psi(\bx,\til \Ta_R^{ij}\bxi)-\chi_R^-(\xi_i,\xi_j)\psi(\by)}.
	\end{align*}
	Summing these operators up for $1\le i,j\le N$ with $i\neq j$, and $R\in\fr$ yields \eqref{S-def}.
	
	Existence, uniqueness, and regularity of solutions to \eqref{FP-eq} follows by the standard theory of parabolic systems. 
	In order not to disrupt the flow of presentation, we postpone this part of the proof to the appendix (Proposition \ref{well-posed}).
	By the theory of Markov processes, the law $\rho_N(t)$ of the process satisfies the forward equation \eqref{FP-eq} \cite{EK86}. 
\end{proof}

Despite being straightforward from our construction, for the sake of completeness we give the proof to the preservation of
exchangeability for the dynamics \eqref{SDE} before ending this subsection. 

\begin{lemma} \label{lem:exch}
If $\rho_N(0) \in \mathcal{P}_{sym}(\Pi^N)$, then $\rho_N(t) \in \mathcal{P}_{sym}(\Pi^N)$ for all $t > 0$. 
Equivalently, the particles $\bY^N_t := (\bX^N_t, \bXi^N_t)$ are exchangeable for any $t> 0$ if the initial particles $\bY^N_0$ are exchangeable.
\end{lemma}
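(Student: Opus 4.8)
The plan is to exploit the invariance of the dynamics under relabeling of the particles, together with the uniqueness of solutions to the Fokker--Planck equation \eqref{FP-eq} established in Proposition \ref{prop:Genadj}. Fix a permutation $\pi$ of $\{1,\ldots,N\}$ and let it act on $\Pi^N$ by $\pi\cdot\by_N=(y_{\pi(1)},\ldots,y_{\pi(N)})$; write $\rho^\pi$ for the pushforward of a measure $\rho$ under this action. Since $m^N=m^{\otimes N}$ is invariant under coordinate permutations, $\pi$ is measure preserving on $(\Pi^N,m^N)$, so the pushforward of a density is simply its precomposition with $\pi^{-1}$, and this preserves both the $L^p$ norms and the regularity class \eqref{b-valued}. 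Note that the hypothesis $\rho_N(0)\in\mathcal P_{sym}(\Pi^N)$ is exactly the statement that $\rho_N(0)^\pi=\rho_N(0)$ for every such $\pi$.

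The central step I would carry out is to show that the adjoint generator $\bL_N^*=\bDe_N+\bS_N^*$ commutes with this action, i.e. $(\bL_N^*\rho)^\pi=\bL_N^*(\rho^\pi)$. For the diffusion part $\bDe_N$ this is immediate from \eqref{Ga-def}: the sum $\tfrac12\sum_i\si(\xi_i)^2\De_{x_i}$ treats every coordinate identically, and $\si$ depends only on the type occupying that coordinate. For the reaction part I would use the expression \eqref{S-def}: the double sum runs over all ordered pairs $(i,j)$ with $i\neq j$, which is permuted bijectively among itself by $(i,j)\mapsto(\pi(i),\pi(j))$, while the kernel $\Phi_R(x_i-x_j)$, the indicators $\chi_R^\pm(\xi_i,\xi_j)$, and the relabelling maps $\Ta_R^{ij},\til\Ta_R^{ij}$ all transform equivariantly; for instance one checks directly from \eqref{Ta-def}, \eqref{Tta-def} that $\Ta_R^{\pi(i)\pi(j)}(\pi\cdot\bxi)=\pi\cdot\Ta_R^{ij}(\bxi)$, and likewise for $\til\Ta_R^{ij}$. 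Reindexing the sum over all ordered pairs and all $R\in\fr$ then yields $\bS_N^*(\rho^\pi)=(\bS_N^*\rho)^\pi$. I expect this bookkeeping to be the main obstacle, precisely because of the order-sensitive indicators \eqref{order} and the ascending-order assignment rule \eqref{convention}: one must observe that these conventions order the chemical \emph{types} ($k\le l$, $k'\le l'$), whereas exchangeability permutes the particle \emph{labels} $i$; the two orderings are independent, so relabelling particles leaves the type-ordering convention untouched and the symmetry is not broken.

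Finally I would conclude by uniqueness. Both $t\mapsto\rho_N(t)$ and $t\mapsto\rho_N(t)^\pi$ solve \eqref{FP-eq}: the former by Proposition \ref{prop:Genadj}, and the latter because $\partial_t(\rho_N^\pi)=(\partial_t\rho_N)^\pi=(\bL_N^*\rho_N)^\pi=\bL_N^*(\rho_N^\pi)$, using the commutation just established, together with the fact that the pushforward preserves the regularity \eqref{b-valued} in which uniqueness holds. The two solutions share the same initial datum $\rho_N(0)^\pi=\rho_N(0)$, so the uniqueness statement of Proposition \ref{prop:Genadj} forces $\rho_N(t)^\pi=\rho_N(t)$ for all $t>0$. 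Since $\pi$ was arbitrary, this is precisely $\rho_N(t)\in\mathcal P_{sym}(\Pi^N)$, which is the asserted preservation of exchangeability.
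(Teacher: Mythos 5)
Your proposal is correct and follows essentially the same route as the paper's own proof: establish that $\bL_N^*=\bDe_N+\bS_N^*$ commutes with the coordinate-permutation action (via equivariance of the maps $\Ta_R^{ij},\til\Ta_R^{ij}$ and reindexing the sum over ordered pairs), then conclude by uniqueness of solutions to the Fokker--Planck equation \eqref{FP-eq}. One cosmetic remark: with your convention $(\pi\cdot\bxi)_m=\xi_{\pi(m)}$, the equivariance identity should carry inverses on the pair of indices, i.e.\ $\Ta_R^{\pi^{-1}(i)\pi^{-1}(j)}(\pi\cdot\bxi)=\pi\cdot\Ta_R^{ij}(\bxi)$, but this washes out once you reindex the ordered-pair sum --- and the paper's proof contains the same harmless slip in its choice of $(m,\ell)$.
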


\begin{proof}
Lemma \ref{lem:exch} is an immediate consequence of Proposition \ref{prop:Genadj}, the uniqueness of the
PDE \eqref{FP-eq} correspondent to the initial data,  and the symmetry of the operator $\bS_N^*$ from \eqref{S-def}.
Specifically, for any permutation $\tau$ on $\{1,\dots,N\}$ the operator $\bS_N^*$ satisfies
\begin{align}\label{Sadjsym}
\tau(\bS_N^*\psi )=\bS_N^*(\tau \psi)
\end{align}
Here we are using $\tau \psi$ to denote the action of $\tau$ on a function $\psi:\Pi^N\to \mbr$ by $(\tau \psi)(\by_N)=\psi(\tau\by_N)$, where $\tau \by_N$ denote the state variables with permuted indices:
\[
\tau \by_N = \tau \left((x_1, \xi_1), \cdots, (x_N,\xi_N) \right) = \left( (x_{\tau(1)}, \xi_{\tau(1)}), \cdots, (x_{\tau(N)}, \xi_{\tau(N)})\right),
\]

To see why the symmetry property \eqref{Sadjsym} holds, observe that
\begin{align}
	&\qquad \tau (\bS_N^*\psi )(\by)= \bS_N^*(\psi)(\tau \by) \nonumber \\
	&= \f 1 N \sum_{R\in \fr} \sum_{\substack{i,j=1\\i\neq j}}^N  \Phi_R(x_{\tau(i)}-x_{\tau(j)})\Big[ \chi_R^+(\xi_{\tau(i)},\xi_{\tau(j)})\psi(\tau \bx,\til\Ta_{R}^{ij}\tau \bxi) -\chi_R^-(\xi_{\tau(i)},\xi_{\tau(j)})\psi(\tau \by) \Big] ,\label{Stau}
\end{align}
where here we suppress the subscript $N$ for $\by,\bx,\bxi$.
Recalling the definition \eqref{Tta-def}, it is easy to see that for any pair of indices $(i,j)$ with $i \neq j$, and if $(m,\ell) = (\tau^{-1}(i),\tau^{-1}(j))$, then
\begin{align*}
\til\Ta_{R}^{ij}\tau \bxi = \tau \til\Ta_{R}^{m,\ell} \bxi. 
\end{align*}
Consequently, \eqref{Stau} is equivalent to
\begin{align}
	\tau (\bS_N^*\psi)(\by)&= \f 1 N \sum_{R\in \fr} \sum_{\substack{m,\ell=1\\m\neq \ell}}^N  \Phi_R(x_{m}-x_{\ell})\sqb{ \chi_R^+(\xi_{m},\xi_{\ell})\psi(\tau \bx,\tau \til\Ta_{R}^{m \ell } \bxi)-\chi_R^-(\xi_{m},\xi_{\ell})\psi(\tau \by) } \nonumber \\
& =  \bS_N^*(\tau \psi)(\by). \qedhere 
\end{align}
\end{proof}

\subsection{Formal derivation of the mean field limit}\label{sec:formalmfl}
We now formally derive the mean field limit equation \eqref{MFE} using the result established earlier; a rigorous justification of the mean field limit is carried out in the next section. 

If $\mu_N(t)\in \mathcal{P}(\Pi)$ is the empirical measure \eqref{emp-meas} and $\phi \in C_C^\infty(\Pi)$ is any test function, then we claim that
\begin{align}\label{exp-weak-form}
\mbe \inn{\phi,\mu_N(s)}\Big|_{s=0}^t&= \mbe \sqb{\int_0^t\rb{\f 12 \inn{\si(\xi)^2 \De \phi,\mu_N} + \inn{\phi,\bar T\mu_N} }(s) ds } + O(N^{-1}),
\end{align}
where for the rest of this section $\inn{\cdot,\cdot}$ denotes the bilinear form on $C_0(\Pi)\times \mathcal{M}(\Pi)$ (i.e.  \eqref{bilinear} with $N=1$). Recall that the nonlinear operator $\bar T$ is defined at \eqref{T-def}. Therefore, if $\mu_N(t) \rightarrow \bar \rho(t)dm$ in the appropriate sense as $N \to \infty$, where $\bar \rho(t)\in C(\Pi)$ is some smooth deterministic function,
then formally passing to the limit in \eqref{exp-weak-form} we obtain the weak formulation of \eqref{MFE}:
\begin{align*}
\inn{\phi,\bar \rho(s)}\Big|_{s=0}^t &= \int _0^t \sqb{\f 12 \inn{\si(\xi)^2\De \phi,\bar \rho(s)}+ \inn{\phi,\bar T\bar \rho(s)}} ds \quad \mbox{for all }\phi \in C^\infty_C(\Pi) .
\end{align*}

The relation \eqref{exp-weak-form} may be derived as follows. From \eqref{weak-form} we know that for any test function $\varphi \in C_0^2(\Pi^N)$,
$$
\mbe \varphi(\bX_s,\bXi_s)\Big|_{s=0}^t  = \int_0^t \mbe  \boldsymbol{\mcll}_N\varphi(\bX_s,\bXi_s) ds
= \int_0^t \mbe  (\bDe_N+\bS_N) \varphi(\bX_s,\bXi_s) ds.
$$
In particular, if we choose $\varphi$ of the form $\varphi (\bx,\bxi)= N^{-1} \sum_{i=1}^N \phi (x_i, \xi_i)$ with $\phi \in C_C^\infty(\Pi)$, then
the left side reads
\begin{align}\label{eq:mfl1}
\mbe \varphi(\bX_s,\bXi_s)\Big|_{s=0}^t = \mbe \int_{\Pi} \phi(x,\xi) \mu_N(s,dx,  d\xi)\Big|_{s=0}^t = \mbe \inn{\phi,\mu_N(s)}\Big|_{s=0}^t.
\end{align}
Moreover,
\begin{align}
\mbe \bDe_N\varphi(\bX_s,\bXi_s) &=\frac{1}{2} \mbe \sqb{\frac{1}{N} \sum_{i=1}^N  \sigma(\Xi^i_s)^2 \Delta_{x_i} \phi(X_s^i, \Xi_s^i)} \nonumber\\
& = \mbe\sqb{\int_{\Pi} \frac{1}{2}\sigma(\xi)^2\Delta_{x}\phi(x) \mu_N(s, dx, d\xi) }
= \f 12 \mbe \inn{\si(\xi)^2\De\phi,\mu_N(s)}
.\label{eq:mfl2}
\end{align}
Consider $\mbe [\bS_N\varphi(\bX_s,\bXi_s)]$. Recall the definition of $\bS_N=\sum_{R\in \fr}\bS_{N,R}$ from \eqref{S'-def}, 
so consider $\mbe [(\bS_{N,R}\varphi)(\bX_s,\bXi_s)]$ for a fixed $R:(k,l)\to(k',l')$ from $\fr$.
By the definition of $\Ta_R^{ij}$,
\begin{align}
& \mbe[ (\bS_{N,R} \varphi)(\bX_s,\bXi_s)]\nonumber  \\
&=\mbe\sqb{\f 1 N \sum_{i=1}^N\sum_{j\neq i}^N \chi_R^-(\Xi^i_s,\Xi^j_s)\Phi_R(X^i_s-X^j_s)[\varphi(\bX^N_s,\Ta_R^{ij}(\bXi^N_s))-\varphi(\bX^N_s,\bXi_s^N)]}\nonumber \\
& =\mbe\sqb{   \f 1 {N^2} \sum_{\substack{i,j=1,i\neq j}}^N  \chi_R^-(\Xi_s^i,\Xi_s^j)\Phi_R(X_s^i-X_s^j)[\phi(X_s^i, k^\prime) - \phi(X_s^i, \Xi_s^i) + \phi(X_s^j, l^\prime) - \phi(X_s^j, \Xi_s^j)] } \nonumber \\
&=\mbe[I^R_1-I^R_2+I^R_3-I^R_4]
.\label{eq:mfl3}
\end{align}
Consider $I_1=I_1^R$.
Let us include the diagonal terms $(i=j)$, which is at most $CN^{-1}$, with constant $C$ depending on $\|\Phi_R\|_{L^\infty}$, $\phi$, into the double summation $I_1$.
Noting also by definition that $\chi_R^-(\xi_i,\xi_j) = \chi_k(\xi_i) \cdot \chi_l(\xi_j)$, one then has
\begin{align*}
I_1+O(N^{-1})&= \f 1 {N^2} \sum_{i,j=1}^N\ \chi_R^-(\Xi_s^i,\Xi_s^j)\Phi_R(X_s^i-X_s^j)\phi(X_s^i, k^\prime) \\
&= \f 1{N^2}\sum_{\xi=1}^n\sum_{i,j=1}^N\chi_{k'}(\xi)\chi_{k}(\Xi_s^i)\chi_{l}(\Xi_s^j)\Phi_R(X_s^i-X_s^j)\phi(X_s^i,\xi)\\
&= \f 1{N}\sum_{\xi=1}^n\sum_{i=1}^N \chi_{k'}(\xi)\chi_k(\Xi_s^i)\phi(X_s^i,\xi)(\Phi_R*\mu_N)(s,X^i_s,l)\\
&= \sum_{\xi=1}^n  \int_{\mbx} \chi_{k'}(\xi) \phi(x,\xi)(\Phi_R*\mu_N)(s,x,l)d\mu_N(s,dx,k) \\
&= \int_{(x,\xi)\in\Pi} \chi_{k'}(\xi)\phi(x,\xi)(\Phi_R*\mu_N)(s,x,l)\mu_N(s,dx,k)d\#(\xi),
\end{align*}
($\#$ denotes the counting measure on $\fs$) where in obtaining the second and  third equality above we used the fact that
$$
\phi(x, k^\prime) = \sum_{\xi=1}^n\chi_{k'}(\xi)\phi(x, \xi),\qquad \sum_{j=1}^N \chi_l(\Xi_s^j)\Phi_R(X_s^i-X_s^j)=(\Phi_R*\mu_N)(s,X_s^i,l).
$$
In the same manner, one can derive similar expressions for the remaining three terms in  \eqref{eq:mfl3}, specifically for $I_3$ we have
\begin{align*}
I_3+O(N^{-1})=\int_{(x,\xi)\in\Pi} \chi_{l'}(\xi)\phi(x,\xi)(\Phi_R*\mu_N)(s,x,k)\mu_N(s,dx,l)d\#(\xi).
\end{align*}
 Note that to derive the expression above for $I_3$ 
 we also used the symmetry of the kernel $\Phi_R$ defined in \eqref{Phi-def}, without which the convolution
above would have been replaced by $\Phi_R^- \ast \mu_N$ where $\Phi_R^-(\cdot) = \Phi_R(-\cdot)$. We make this symmetry assumption 
to simplify expressions of the mean field limit \eqref{limitsystem}. 
Summing up $I_1,I_3$, and using the notation $\bar T_R^+$ from \eqref{TR-def}, we have
\begin{align*}
I_1+I_3&= O(N^{-1})+ \int_{\Pi} \phi(x,\xi)\bar T_R^+ \mu_N(dx,d\xi)=O(N^{-1})+\inn{\phi,\bar T_R^+\mu_N}.
\end{align*}
Here, $\bar T_R^\pm$ from \eqref{TR-def}, initially defined on $C_0(\Pi)\to C_0(\Pi)$, can be extended naturally to 
a map from the space of signed measures on $\Pi$ to itself (as $\Phi_R$ is bounded integrable).
The same computation for $I_2,I_4$ gives
\begin{align*}
-(I_2+I_4)=O(N^{-1})+\inn{\phi ,\bar T_R^-\mu_N},
\end{align*}
and hence by the definition $\bar T_R=\bar T_R^-+\bar T_R^+$ (see \eqref{T-def})
\begin{align*}
(I_1+I_3)-(I_2+I_4)&= O(N^{-1})+\inn{\phi, T_R\mu_N}.
\end{align*}
Summing up $R\in \fr$ in \eqref{eq:mfl3} and recalling $\bar T$ from \eqref{T-def}, we arrive at
\begin{align*}
\mbe \sqb{(\bS_{N}\varphi)(\bX_s,\bXi_s)}&= \sum_{R\in \fr} \mbe[I_1^R-I_2^R+I_3^R-I_4^R]\\
&= O(N^{-1}) + \mbe \sqb{\sum_{R\in \fr} \inn{\phi, \bar T_R\mu_N} }
= O(N^{-1}) + \mbe \sqb{\inn{\phi, \bar T\mu_N} }.
\end{align*}
Combining this with \eqref{eq:mfl1}, \eqref{eq:mfl2}, we have shown \eqref{exp-weak-form}.  This concludes  the formal derivation of the mean field limit equation.


\section{Proof of the Main Result (Theorem \ref{main})} \label{Int-ineq}
To prove Theorem \ref{main}, we will compare the joint distribution of $N$ particles with the tensorized 
law $\bar \rho_N:=\bar \rho^{\otimes N}$ of the mean field limit in terms of their relative entropy. Recall that the 
joint distribution $\rho_N(t)$ of the process $\{(X^i_t,\Xi_t^i)_{i=1,\cdots, N}\}_{t\ge 0}$, by Proposition \ref{prop:Genadj},
satisfies the Fokker-Planck equation \eqref{FP-eq}.  Before we estimate the relative entropy, let us first identify the PDE satisfied by the tensorized law $\bar \rho_N$.
Recall that the mean field limit $\bar \rho$ satisfies the system \eqref{MFE} with initial data \eqref{initial} (see Proposition \ref{well-posed} for the existence and uniqueness result). 
The tensorized law
\begin{align*}
\bar \rho_N(t,\by_N)&= \prod_{i=1}^N \bar \rho(t,y_i),\quad \by_N=(y_1,\cdots, y_N)=((x_1,\xi_1),\cdots,(x_N,\xi_N))\in \Pi^N,
\end{align*}
then solves the PDE system
\begin{align}\label{ts-eq}
\partial _t \bar \rho_N &= \bDe_N\bar \rho_N + \bar \bT_N\bar \rho_N,\qquad (t,\by_N)\in (0,\infty)\times \Pi^N,
\end{align}
where $\bDe_N$ is the diffusion operator from \eqref{Ga-def}, and $\bar \bT_N$ is as follows, with $\bar T$ from \eqref{T-def}:
\begin{align}\label{barT-def}
(\bar \bT_N\bar \rho_N)(\by_N)&:= \sum_{i=1}^N \sqb{\bar T(\bar \rho)(y_i)\prod_{j=1,j\neq i}^N \bar\rho(y_j)}.
\end{align}

Recall the \emph{normalized relative entropy} of $\rho_N$ with respect to $\bar \rho_N$:
\begin{align*}
W_N(t)=\mch_N(\rho_N\|\bar \rho_N)(t)&:= \f1 N\int_{\Pi^N} \rho_N(t)\log\rb{\f{\rho_N(t)}{\bar \rho_N(t)}} dm^N.
\end{align*}
Our main objective is to establish a differential inequality for this quantity, then invoke Gr\"onwall lemma to obtain an estimate in terms of $N$ and $W_N(0)$.
Taking the time derivative of $W_N(t)$ and using equations \eqref{FP-eq}, \eqref{ts-eq}, we have for all $t>0$ that
\begin{align}\label{RE-ineq}
W_N'(t)&= \f 1 N \int_{\Pi^N} \sqb{\partial _t \rho_N \log\rb{\f{\rho_N}{\bar \rho_N}}+ \rho_N\rb{\f{\partial _t\rho_N}{\rho_N}-\f{\partial _t \bar \rho_N}{\bar \rho_N} } }dm^N
= D(t)+G(t), 
\end{align}
where
\begin{align}
D(t)&= \f 1 N\int _{\Pi^N} \sqb{\bL^*_N(\rho_N) \log\f{\rho_N}{\bar \rho_N} + \rho_N\rb{\f{\bL ^*_N(\rho_N)}{\rho_N} -\f{\bL^*_N(\bar \rho_N)}{\bar \rho_N}} } dm^N, \label{D-def}\\
G(t)&= \f 1 N \int _{\Pi^N} \rho_N \f{(\bar \bT_N-\bS^*_N)(\bar \rho_N)}{\bar \rho_N}  dm^N. \nonumber
\end{align}
All functions $\rho_N,\bar \rho_N$ involved above are evaluated at time $t> 0$.
The calculations above can be rigorously justified since the solutions have sufficient regularity, namely, for any $p\in [1,\infty)$, 
\begin{align*}
\rho_N,\bar \rho_N&\in C([0,\infty);L^1 (\Pi^N))\cap C((0,\infty); W^{2,p}(\Pi^N))\cap C^1((0,\infty);L^p(\Pi^N)),
\end{align*}
and $\rho_N, \bar \rho_N > 0$ are bounded away from zero when $t > 0$. (See Propositions \ref{prop:Genadj}, \ref{well-posed} and \ref{positive}.) 
We proceed with estimating the quantities $D(t)$ and $G(t)$.

\subsection{Estimating $D(t)$, Dissipation of Relative Entropy}
$D(t)$ is in fact nonpositive, due to the diffusive nature of the operator $\bL_N^*$.
Specifically, it is due to the following lemma.

\begin{lemma}\label{lem-REineq} 
	Let $\rho_N,\bar \rho_N$ be given in Theorem \ref{main}, with $\mch(\rho_N\|\bar \rho_N)(0)<\infty$. 
	Then the integral \eqref{D-def} is finite and nonpositive for all $t>0$.

\end{lemma}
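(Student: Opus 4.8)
The plan is to exhibit each mechanism in $\bL_N^*=\bDe_N+\bS_N^*$ as entropy-dissipating. I would first record that $\bL_N(1)=0$: indeed $\bDe_N 1=0$, and every jump bracket $\varphi(\bx,\Ta_R^{ij}\bxi)-\varphi(\by)$ vanishes on constants. Hence by adjointness $\int_{\Pi^N}\bL_N^*(\rho_N)\,dm^N=\int_{\Pi^N}\rho_N\,\bL_N(1)\,dm^N=0$, so the middle term of \eqref{D-def} drops, leaving
\[
D(t)=\f 1N\int_{\Pi^N}\bL_N^*(\rho_N)\log\f{\rho_N}{\bar\rho_N}\,dm^N-\f 1N\int_{\Pi^N}\rho_N\,\f{\bL_N^*(\bar\rho_N)}{\bar\rho_N}\,dm^N.
\]
I would then split this along $\bDe_N$ and $\bS_N^*$ and bound the two contributions separately.

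For the diffusion contribution, $\bDe_N=\f12\sum_i\si(\xi_i)^2\De_{x_i}$ is self-adjoint and $\si(\xi_i)^2$ does not depend on the variable $x_i$ being differentiated, so integrating by parts on the boundaryless torus and recombining the resulting scalar integrals into a perfect square yields the weighted Fisher information
\[
-\f 1N\int_{\Pi^N}\sum_{i=1}^N\f{\si(\xi_i)^2}{2}\,\rho_N\,\abs{\nabla_{x_i}\log\f{\rho_N}{\bar\rho_N}}^2\,dm^N\le 0.
\]
This is the classical computation and is routine given the $W^{2,p}$ regularity of $\rho_N,\bar\rho_N$.

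The jump contribution is the crux. Using that $\bS_N^*$ is the $L^2(\Pi^N)$-adjoint of $\bS_N$ (Proposition \ref{prop:Genadj}), the first integral equals $\f1N\int\rho_N\,\bS_N\big(\log(\rho_N/\bar\rho_N)\big)\,dm^N$, whose $(R,i,j)$-channel is $\int_{\{\chi_R^-=1\}}\Phi_R(x_i-x_j)\,\rho_N(\by)\big[\log\tfrac{\rho_N}{\bar\rho_N}(\by^*)-\log\tfrac{\rho_N}{\bar\rho_N}(\by)\big]\,dm^N$ with $\by^*:=(\bx,\Ta_R^{ij}\bxi)$. In the second integral I would insert \eqref{S-def} and, in the gain term carrying $\chi_R^+$, substitute the type variable through the bijection $\til\Ta_R^{ij}$: on $\{\chi_R^+=1\}$ the maps $\Ta_R^{ij}$ and $\til\Ta_R^{ij}$ are mutually inverse onto $\{\chi_R^-=1\}$, and the factor $\Phi_R(x_i-x_j)$ is untouched since only types are permuted. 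This brings both integrals onto the common set $\{\chi_R^-=1\}$, and writing $p=\tfrac{\rho_N}{\bar\rho_N}(\by)$ and $q=\tfrac{\rho_N}{\bar\rho_N}(\by^*)$, the full $(R,i,j)$-channel collapses to
\[
\int_{\{\chi_R^-=1\}}\Phi_R(x_i-x_j)\,\bar\rho_N(\by)\,\big[\,p\log(q/p)+p-q\,\big]\,dm^N.
\]
Since $\Phi_R\ge0$ and $\bar\rho_N\ge0$, nonpositivity of every channel follows from the elementary Gibbs inequality $p\log(p/q)\ge p-q$ (equivalently $p\log(q/p)+p-q\le0$), with equality iff $p=q$. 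Summing the channels and adding the Fisher information gives $D(t)\le0$.

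Finiteness is secondary: for each $t>0$ the densities $\rho_N,\bar\rho_N$ are bounded, bounded away from zero, and lie in $W^{2,p}(\Pi^N)$ (Propositions \ref{prop:Genadj}, \ref{well-posed}, \ref{positive}), so over the compact $\Pi^N$ the Fisher integrand and all logarithmic factors are integrable. The main obstacle is entirely the bookkeeping in the jump step — invoking adjointness, performing the $\Ta_R^{ij}/\til\Ta_R^{ij}$ change of variables so the two integrals share the domain $\{\chi_R^-=1\}$, and recombining them into the exact form $p\log(q/p)+p-q$ to which Gibbs' inequality applies; the degenerate cases $k=l$ or $k'=l'$ require only the remark that $\Ta_R^{ij}$ remains a bijection between the relevant fixed type-slices.
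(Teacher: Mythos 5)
Your route is genuinely different from the paper's. The paper never decomposes $D(t)$ into diffusion and jump channels: it proves the abstract data-processing inequality (the map $t\mapsto\mch(u_t\|\til u_t)$ is nonincreasing when $u_t,\til u_t$ evolve under the same Markov semigroup, via the chain rule for relative entropy), and then identifies the integral \eqref{D-def} as the right derivative of this map at $t=0$, which is therefore nonpositive. Your direct computation --- self-adjointness of $\bDe_N$ producing the negative relative Fisher information, the adjointness $\inn{\bS_N^*\rho_N,\varphi}=\inn{\rho_N,\bS_N\varphi}$, the $\Ta_R^{ij}/\til\Ta_R^{ij}$ change of variables bringing both jump integrals onto $\{\chi_R^-=1\}$, and the pointwise inequality $p\log(q/p)+p-q\le 0$ --- is algebraically correct, and it in fact yields more than the paper's soft argument, namely an explicit formula for the entropy dissipation.

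There is, however, a genuine gap in your finiteness step, and it propagates back into the jump computation. You assert that ``$\rho_N,\bar\rho_N$ are bounded, bounded away from zero,'' citing Proposition \ref{positive}. That is true for $\bar\rho_N$ (Theorem \ref{main} assumes $\inf_\Pi\bar\rho_0>0$), but false for $\rho_N$ in general: Theorem \ref{main} only assumes $\rho_N(0)\in\mathcal{P}_{sym}(\Pi^N)$ with finite initial relative entropy, and Propositions \ref{positive2}/\ref{positive} give strict positivity only on type-slices that are initially occupied or reachable through reactions. If, for instance, all $N$ particles are initially of a single type, then $\rho_N(t)$ vanishes identically, for all time, on every other slice $\bxi\in\fs^N$. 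On such slices $\log(\rho_N/\bar\rho_N)=-\infty$, so (a) your invocation of the adjointness with test function $\log(\rho_N/\bar\rho_N)$, which Proposition \ref{prop:Genadj} establishes for $C_0$ functions, is not licensed as stated; and (b) in a channel where $p>0$ but $q=0$ your integrand $\Phi_R\,\bar\rho_N\,[\,p\log(q/p)+p-q\,]$ equals $-\infty$, so finiteness does not follow. Closing the gap requires the dichotomy the paper itself uses: every slice is either identically zero for all time (whence $\bL_N^*\rho_N\equiv 0$ there and, with the convention $0\log 0=0$, it contributes nothing) or uniformly positive on compact time intervals; and, the key structural point, an alive slice with $\chi_R^-=1$ and $\Phi_R\not\equiv 0$ can only react into an alive slice, since otherwise the gain term would force the target slice to become nonzero. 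This is exactly the bookkeeping the paper performs with its set $\mathcal{A}$ of alive slices; your closing remark about the degenerate cases $k=l$, $k'=l'$ does not address it.
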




\begin{proof}[Proof of Lemma \ref{lem-REineq}]
	The strict positivity condition for $\bar \rho_0$ in the main theorem is not essential in the proof, and we will prove the lemma with the weaker assumption of $\bar \rho_0$ being propagated in the sense defined before Proposition \ref{positive}.
	We will first show the following integral is finite and nonpositive, for any appropriate pair of densities $\rho,\til \rho\in L^1(\Pi^N)$:
	\begin{align}\label{D-def2}
	\int_{\Pi^N}  \sqb{\bL_N^*(\rho)\log\rb{\f{\rho}{\til \rho}} + \rho\rb{\f{\bL_N^*(\rho)}{\rho}-\f{\bL_N^*(\til\rho)}{\til \rho}}}dm^N \le 0. 
	\end{align}
	Let $\{u_t,\til u_t\}_{t\ge 0}$ be the solutions of \eqref{FP-eq}, with initial data $u_0=\rho, \til u_0=\til \rho$, namely,
	\begin{align}\label{FP-eq2}
	\partial _t u_t =\mcll^* (u_t),\, u_0=\rho,\qquad \partial _t \tilde{u}_t = \mcll^*(\tilde{u}_t),\, \tilde{ u}_0=\tilde{\rho}.
	\end{align}
	The proof of \eqref{D-def2} then breaks into two parts: first, we show that the relative entropy between two solutions $W(t)=\mch(u_t\|\til u_t)$ 
	is non-increasing w.r.t. the time variable $t\ge 0$, then show that the integral in \eqref{D-def2} is the derivative of $W(t)$ evaluating at $t=0$, and thus must be non-positive. 
	The precise condition for $\rho,\til\rho$ will be given later.
	
	We start with verifying the claim that $W(t)$ is non-increasing, for any initial data $\rho,\til\rho\in L^1(\Pi^N)$ with $W(0)=\mch(\rho\|\til \rho )<\infty$. 
	In fact, this monotonicity property of relative entropy, often referred to as the {\em data processing inequality} \cite{Wilde17}, 
	is well-known in physics and information theory community, and we will provide a short proof to it. 	Let $(\mathcal{X},\mu)$ be a measure space and consider two probability densities $p(x,y),\til p(x,y)\in L^1(\mathcal{X}^2,\mu^{\otimes 2})$. 
	Denote by $p(y|x),\tilde{p}(y|x)$ their conditional densities, and by $p(x), \tilde{p}(x)$ the $x$-marginals. 
	With a slight abuse of notation, we denote the \emph{averaged conditional relative entropy} between  $p(y|x)$ and $\tilde{p}(y|x)$  by
	\begin{align*}
	\mch(p(y|x)\|	\tilde{p}(y|x)) &= \int_\mathcal{X} p(x) \int_{\mathcal{X}} p(y|x)\log\frac{p(y|x)}{\tilde{p}(y|x)} d\mu(y) d\mu(x).
	\end{align*}
	By the chain rule \cite[Theorem C.3.1]{DE97} of the relative entropy, we have
	\begin{align*}
	\mch(p(x,y)\|\tilde{p}(x,y))&= \mch(p(x)\|\tilde{p}(x))+ \mch(p(y|x)\|\tilde{p}(y|x)).
	\end{align*}
	
	Now return to the solutions $\{u_t,\til u_t\}_{t\ge 0}$ defined earlier, at \eqref{FP-eq2}. 
	Recall from Proposition \ref{prop:Gen} that they are probability densities of the Markov process $\{\bY_t=(\bX_t,\bXi_t)\}_{t\ge 0}$ defined by \eqref{SDE}, 
	with initial distribution $\rho,\tilde{\rho}$ respectively.
	Given $t,s\ge 0$, let $u_{t,s},\tilde{u}_{t,s}$ denote the joint probability density of $(\bY_t,\bY_s)$.
	Similarly, let $u_{t|s},\tilde{u}_{t|s}$ be the conditioned densities of $\bY_t$  given $\bY_s$.
	By the chain rule, for any $t,h\ge 0$,
	\begin{align*}
	\mch(u_t\|\tilde{u}_t)+ \mch(u_{t+h|t}\|\tilde{u}_{t+h|t})= \mch(u_{t,t+h}\|\tilde{u}_{t,t+h})
	= \mch(u_{t+h}\|\tilde{u}_{t+h}) + \mch(u_{t|t+h}\|\tilde{u}_{t|t+h}).
	\end{align*}
	Since $u,\tilde{u}$ are defined by the same Markov process (with the same transition kernel), their condition densities coincide, 
	i.e. $u_{t+h|t}=\tilde{u}_{t+h|t}$. As a result, the second term of the left hand side vanishes. By the non-negativity of relative entropy, it follows from the last equation that
	\begin{align*}
	W(t+h)=\mch(u_{t+h}\|\tilde{u}_{t+h})\le \mch(u_t\|\tilde{u}_t)=W(t).
	\end{align*}
	Therefore, $t\mapsto W(t)$ is nonincreasing.
	
	We next proceed with verifying that the integral \eqref{D-def2} is given by $W'(0)$. 
	Indeed, formally differentiating $W(t)$ respect to $t$ and using \eqref{FP-eq2} and nonincreasing of $W(t)$ yield
	\begin{align*}
	W'(0)&= \frac{d}{dt}\Big|_{t=0} \int_{\Pi^N} u_t\log\rb{\f{u_t}{\tilde{u}_t} }dm^N\\
	& = \int_{\Pi^N} \sqb{\bL_N^*(\rho)\log\rb{\f{\rho}{\tilde{\rho}} }+ \rho \rb{\f{\bL_N^*(\rho)}{\rho} -\f{\bL_N^*(\tilde{\rho})}{\tilde{\rho}} }  }dm^N\le 0.
	\end{align*}
	In conclusion, if $\rho,\til \rho\in L^1(\Pi^N)$ are so that $t\mapsto W(t)=\mch(u_t\|\til u_t)$ is (right-hand) differentiable at $t=0$, then \eqref{D-def2} holds. 
	
	Now let us apply \eqref{D-def2} to conclude Lemma \ref{lem-REineq}.
	Fix $t_0>0$ and consider the integral $D(t_0)$ from \eqref{D-def}. 
	Using \eqref{D-def2} with $\rho_N(t_0),\bar \rho_N(t_0)$ taking the roles of $\rho,\til\rho$, we then have $D(t_0)\le 0$, provided that the correspondent $W(t)$ is differentiable at $t=0$. 
	So it remains to check this differentiability condition. 
	
	First, from the definition, $\til u_t$ is the solution of \eqref{FP-eq} with initial data $\til u_0=\bar \rho_N(t_0)=\bar \rho^{\otimes N}(t_0)$, 
	where $\bar \rho$ is the solution of \eqref{MFE}.
	By Propositions \ref{well-posed} and \ref{positive} (and under the assumptions of Theorem \ref{main}), 
	we know that $\bar \rho(t_0)\in W^{2,p}(\Pi)$ for all $p\in[1,\infty)$, $\inf_{\Pi} \bar \rho(t_0)>0$, 
	and hence $\bar \rho_N(t_0)\in W^{2,p}(\Pi^N),\inf_{\Pi^N} \bar\rho_N(t_0)>0 $. Thus, by Propositions \ref{well-posed} 
	and \ref{positive} again, we have for any $p\in [1,\infty)$ and $\de>0$ that
	\begin{align*}
	\til u_t\in C^1([0,\infty);L^p(\Pi^N)), \quad \inf_{(t,\by)\in [0,\de]\times \Pi^N} \til u_t(\by)>0.
	\end{align*}
	Next, since $u_t$ is the solution of \eqref{FP-eq} with $u_0=\rho_N(t_0)$, we have then $u_t=\rho_N(t_0+t)$. 
	By the regularity of solutions (Proposition \ref{well-posed}), we have $u_t\in C^1([0,\infty);L^p(\Pi^N))$ for any $p\in [1,\infty)$.
	Moreover, by Proposition \ref{positive}, the function $u_t$ has the following property: for any $\bxi\in \fs^N$ 
	\begin{align*}
	\mbox{either }u_t(\bx,\bxi)\equiv 0\;\;\forall (t,\bx)\in [0,\infty)\times \mbx^N,\quad \mbox{ or }\quad \inf_{(t,\bx)\in[0,\de]\times \mbx^N} u_t(\bx,\bxi)>0\;\;\forall \de>0.
	\end{align*}
	Let $\mathcal{A}\subset \fs^N$ be the set of all $\bxi$ so that the latter of the above holds. Then 
	\begin{align*}
	W(t)&= \int_{\Pi^N} \chi_{\mathcal{A}}(\bxi) u_t (\log u_t -\log \til u_t) dm^N.
	\end{align*}
	Since $u_t,\til u_t\in C^1([0,\infty);L^p(\Pi^N))$ are uniformly bounded above, and below from $0$ over 
	a time interval $[0,\de]\times \mbx^N$ on the set $\mathcal{A}$, this follows $\chi_{\mathcal{A}}\log(\f{u_t}{\til u_t})\in C^1([0,\de);L^p(\Pi^N))$ for any $p\in[1,\infty)$.
	This implies $W(t)$ is differentiable at $t=0$, which finishes the proof of Lemma \ref{lem-REineq}.	
\end{proof}

\subsection{Estimating $G(t)$}
Next consider $G(t)$ from \eqref{RE-ineq}.
In the coming computation, we suppress the time variable $t$ for $\rho_N,\bar \rho_N$, and the subscript $N$ for $\by_N,\bx_N,\bxi_N, \bar \bT_N, \bS_N^*$ and so on, since $N$ will be fixed.
To further ease our notation, we also introduce functions $(u_1,\cdots, u_n)=\bar \rho$, that is,
\begin{align*}
u_\xi(t,x)=\bar \rho(t,x,\xi)\quad \mbox{for }\xi\in \fs.
\end{align*}
Recall that these functions satisfy the mean field limit equation \eqref{MFE}.
Recall also the quantity $G(t)$ is the expectation, against $\rho_N$, of the function $\bar \rho_N^{-1}(\bT-\bS^*)\bar \rho_N$.
We begin with considering $\bar \rho_N^{-1}\bar \bT\bar \rho_N$.
By the definitions of $\bar \bT, \bar T$ from \eqref{barT-def}, \eqref{T-def}, we have
\begin{align}
\frac{\bar \bT\bar \rho_N}{\bar \rho_N}(\by)&= \sum_{i=1}^N \frac{(\bar T\bar \rho)(x_i,\xi_i)}{\bar \rho(x_i,\xi_i)} \nonumber \\
&= \sum_{\substack{R:(k,l)\to (k',l')}} \sum_{i=1}^N \frac{ \chi_{k'}(\xi_i)(\Phi_R*u_l)(x_i) u_k(x_i)+ \chi_{l'}(\xi_i)(\Phi_R*u_k)(x_i)u_l(x_i) }{\bar \rho(x_i,\xi_i)} \nonumber \\
&\qquad - \sum_{R:(k,l)\to(k',l')} \sum_{i=1}^N [\chi_k(\xi_i)(\Phi_R*u_l)(x_i)+ \chi_l(\xi_i)(\Phi_R*u_k)(x_i)] \nonumber \\
&= \sum_{R\in \fr} \sum_{i=1}^N [A^R_t(y_i)+\hat A^{R}_t(y_i)], \label{sum1}
\end{align}
where, for a given reaction $(k,l)\xrightarrow{R} (k',l')$ from $\fr$,
\begin{align}
A_t^R(y)&:= \frac{\chi_{l'}(\xi)(\Phi_R*u_k)(x)u_l(x) }{\bar \rho(x,\xi)} - \chi_{l}(\xi) (\Phi_R*u_k)(x), \label{A-def}\\
\hat A_t^R(y)&:= \frac{\chi_{k'}(\xi)(\Phi_R*u_l)(x)u_k(x) }{\bar \rho(x,\xi)} - \chi_{k}(\xi)  (\Phi_R*u_l)(x) .\nonumber
\end{align}
We can symmetrize \eqref{sum1} by introducing an extra summation over $j=1,\cdots, N$, so that
\begin{align}\label{comp1}
\frac{\bar \bT(\bar \rho_N)}{\bar \rho_N}(\by)&= \sum_{R\in \fr} \sum_{i=1}^N [A_t^R(y_i)+\hat A_t^R(y_i)]= \f 1 N \sum_{R\in \fr} \sum_{i,j=1}^N \sqb{A_t^R(y_j)+\hat A_t^R(y_i)}.
\end{align}

On the other hand, by the definition of $\bS^*_N$ from \eqref{S-def} and the tensorized law $\bar \rho_N$,
\begin{align}
\frac{\bS^*(\bar \rho_N)}{\bar \rho_N}(\by)
&=\f 1 N \sum_{R\in \fr}\sum_{i,j=1,i\neq j}^N \Phi_R(x_i-x_j)\sqb{\chi_R^+(\xi_i,\xi_j) \frac{\bar \rho(x_i,k) \bar\rho(x_j,l)}{\bar \rho(x_i,\xi_i)\bar \rho(x_j,\xi_j)} -\chi_R^-(\xi_i,\xi_j) }\nonumber \\
&= \f 1 N \sum_{R\in \fr} \sum_{i,j=1,i\neq j}^N B_t^R(y_i,y_j), \label{comp2}
\end{align}
where
\begin{align}
B_t^R(y,y')&:= \Phi_R(x-x')\sqb{\chi_R^+(\xi,\xi')\frac{u_k(x)u_l(x')}{\bar \rho(x,\xi)\bar \rho(x',\xi')} - \chi_R^-(\xi,\xi')} . \label{B-def}
\end{align}

Now define
\begin{align}
F_t^R(y_i,y_j):= A_t^R(y_j)+\hat A_t^R(y_i)-B_t^R(y_i,y_j), \quad
f_t(y_i,y_j):= \sum_{R\in \fr} F_t^R(y_i,y_j). \label{F-def}
\end{align}
By \eqref{comp1}, \eqref{comp2}, we have
\begin{align*}
\frac{(\bar \bT-\bS^*)(\bar \rho_N)}{\bar \rho_N}(\by)
&= \f 1 N \sum_{i,j=1,i\neq j}^N f_t(y_i,y_j)+ \f 1 N \sum_{R\in \fr}\sum_{i=1}^N \sqb{A_t^R(y_i)+\hat A_t^R(y_i)}.
\end{align*}
and so the quantity $G(t)$ is given by
\begin{align*}
G(t)&= \f 1N \int_{\Pi^N} \rho_N \frac{(\bar \bT-\bS^*)(\bar \rho_N)}{\bar \rho_N}dm^N \\
&= \f 1 {N^2} \sum_{i,j=1,i\neq j}^N  \int_{\Pi^N} \rho_Nf_t(y_i,y_j)dm^N + \f 1 {N^2}\sum_{R\in \fr,i=1}^N \int_{\Pi^N} \rho_N\sqb{A_t^R(y_i)+\hat A_t^R(y_i)} dm^N \\
&=:G_1(t)+G_2(t).
\end{align*}

Introduce the quantity
\begin{align}\label{K-def}
K_t&:= \sum_{R\in \fr} \cb{\sup_{y\in \Pi}\sqb{|A_t^R(y)|+|\hat A_t^R(y)|}+\sup_{(y,y')\in \Pi^2} |B_t^R(y,y')|} .
\end{align}
Then the diagonal term $G_2(t)$ is simply bounded by
\begin{align*}
G_2(t)&= \f 1{N^2}\sum_{R\in \fr}\sum_{i=1}^N \int_{\Pi^N}	\rho_N(\by)\sqb{A_t(y_i)+\hat A_t(y_i)}dm^N(\by)
\le \f{K_t}{N}.
\end{align*}
As for $G_1(t)$, to set up a differential inequality for Gr\"onwall lemma, we need the following inequality which essentially follows from the variational characterization for relative entropy.
\begin{lemma}[{\cite[Lemma 1]{JW18}}]
	Let $N\ge 1$ and $\rho,\bar \rho$ be two probability measures on the space $\Pi^N$. For every $\eta>0$ and $\Psi\in L^\infty(\Pi^N)$, it holds
	\begin{align*}
	\int_{\Pi^N} \Psi d\rho&\le \eta\sqb{\mch_N(\rho\|\bar \rho) + \f 1 N \log \int_{\Pi^N} e^{\eta^{-1}N\Psi} d\bar \rho  } .
	\end{align*}
\end{lemma}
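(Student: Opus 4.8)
The plan is to recognize the stated inequality as the Gibbs (Donsker--Varadhan) variational characterization of relative entropy, applied to a rescaled test function. Since $\mch_N(\rho\|\bar\rho)=\f1N\mch(\rho\|\bar\rho)$, multiplying the asserted inequality through by $N/\eta$ and setting $g:=\eta^{-1}N\Psi\in L^\infty(\Pi^N)$ reduces the claim to the single core estimate
\begin{align*}
\int_{\Pi^N} g\,d\rho \;\le\; \mch(\rho\|\bar\rho)+\log\int_{\Pi^N} e^{g}\,d\bar\rho,
\end{align*}
for arbitrary bounded measurable $g$ and arbitrary probability measures $\rho,\bar\rho$ on $\Pi^N$; the rescaling back to the stated form is then immediate.

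To prove the core estimate I may assume $\rho\ll\bar\rho$, since otherwise $\mch(\rho\|\bar\rho)=+\infty$ and the inequality is trivial (its left side being finite because $g$ is bounded). The key step is to introduce the tilted (Gibbs) measure
\begin{align*}
d\bar\rho_g:=\f{e^{g}}{Z}\,d\bar\rho,\qquad Z:=\int_{\Pi^N} e^{g}\,d\bar\rho,
\end{align*}
which is a well-defined probability measure mutually absolutely continuous with $\bar\rho$, since $g\in L^\infty$ forces $0<Z<\infty$. Writing the Radon--Nikodym chain $\f{d\rho}{d\bar\rho_g}=\f{d\rho}{d\bar\rho}\cdot\f{Z}{e^{g}}$ and inserting its logarithm into $\mch(\rho\|\bar\rho_g)=\int\log\f{d\rho}{d\bar\rho_g}\,d\rho$ produces the exact identity
\begin{align*}
\mch(\rho\|\bar\rho_g)=\mch(\rho\|\bar\rho)-\int_{\Pi^N} g\,d\rho+\log Z .
\end{align*}
Nonnegativity of relative entropy forces the left side to be $\ge 0$, and rearranging gives precisely the core estimate; multiplying back by $\eta/N$ recovers the lemma.

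There is no genuine obstacle in this argument; the only point needing care is the integrability bookkeeping that legitimizes the displayed identity, and this is exactly where the hypothesis $\Psi\in L^\infty(\Pi^N)$ enters. It guarantees $Z\in(0,\infty)$ and that $\log\int e^{\eta^{-1}N\Psi}\,d\bar\rho$ is finite, ruling out any $\infty-\infty$ ambiguity and ensuring the identity persists (as the inequality ``$\ge$'') even when $\mch(\rho\|\bar\rho)=+\infty$. As a cross-check I would note an alternative derivation that avoids the tilted measure: apply Young's inequality $ab\le a\log a-a+e^{b}$ (for $a\ge0$) pointwise with $a=\f{d\rho}{d\bar\rho}$ and centered exponent $b=g-\log Z$, then integrate against $\bar\rho$, the centering by $\log Z$ being precisely what sharpens the constant to the logarithmic term. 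I would nonetheless present the tilted-measure route as the main line, since it exposes the underlying Legendre-duality structure most transparently.
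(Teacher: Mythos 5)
Your proof is correct. The paper itself does not prove this lemma --- it simply cites \cite[Lemma 1]{JW18} and remarks that it ``essentially follows from the variational characterization for relative entropy'' --- and your tilted-measure (Donsker--Varadhan/Gibbs) argument, together with the rescaling $g=\eta^{-1}N\Psi$ and the careful handling of the $\rho\not\ll\bar\rho$ and $\mch(\rho\|\bar\rho)=+\infty$ cases, is precisely the standard argument being invoked there.
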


Applying this lemma to $G_1(t)$, with $\Psi=N^{-2}\sum_{i,j=1,i\neq j}^N f_t(y_i,y_j)$, $\rho_N(t),\bar\rho_N(t)$ in place of $\rho,\bar \rho$, 
and with some $\eta>0$ (depending on $f_t$, c.f. \eqref{F-def}) to be determined later, we have
\begin{align}\label{G-term}
G_1(t)&\le \eta W_N(t)+\f {\eta} N \log \int_{\Pi^N}\bar \rho_N \exp \rb{\f 1 {\eta N} \sum_{i,j=1}^N f_t(y_i,y_j) } dm^N.
\end{align}
To this end, we need an estimate of the exponential moment on the right hand side.
This can be achieved by using the following large deviation inequality.

\begin{lemma}\label{Concen}
	Let $(\Pi,\bar \rho)$ be a probability space, $\{Y_1,Y_2,\cdots\}$ be a sequence of i.i.d.
	$\Pi$-valued random variables with common distribution $\bar \rho$, and $f\in L^\infty(\Pi^2,\bar \rho^{\otimes 2})$ 
	be a bounded measurable function satisfying the following marginal mean zero conditions:
	\begin{align}\label{eq:center}
	\mbe [f(Y_1,Y_2)|Y_1]=0,\qquad \mbe [f(Y_1,Y_2)|Y_2]=0.
	\end{align}
	Then there exists a constant $\eta=\eta(f)>0$ such that
	\begin{align}\label{exp-mom}
	\sup_{N\ge 2}\mbe \sqb{\exp\rb{\f 1 {\eta N} \sum_{i,j=1,i\neq j}^N f(Y_i,Y_j)}  }\le 2.
	\end{align}
	Specifically, $\eta(f)$ can be chosen to be $2\sqrt{2}e\|f\|_{L^\infty(\Pi^2,\bar \rho^{\otimes 2})}$.
\end{lemma}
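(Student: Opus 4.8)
The plan is to expose a martingale difference structure hidden in the double sum, to control its $L^p$-norms by two applications of a sharp Marcinkiewicz--Zygmund inequality (one for i.i.d.\ summands, one for a martingale difference sequence), and then to recover the exponential bound by expanding $\exp$ as a power series. Throughout I abbreviate $S_N := \sum_{i,j=1,i\neq j}^N f(Y_i,Y_j)$ and write $\mathcal{F}_k := \sigma(Y_1,\dots,Y_k)$, and I will use the sharp martingale Marcinkiewicz--Zygmund inequality in the form $\big\|\sum_k D_k\big\|_{L^p}^2 \le (p-1)\sum_k \|D_k\|_{L^p}^2$, valid for any martingale difference sequence $\{D_k\}$ and any real $p\ge 2$ (this already contains the independent case).

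First I would set up the martingale. Grouping the ordered pairs by their larger index and writing $D_k := \sum_{i<k}\big[f(Y_i,Y_k)+f(Y_k,Y_i)\big]$, one has $S_N = \sum_{k=2}^N D_k$. Since $Y_k$ is independent of $\mathcal{F}_{k-1}$, the two centering hypotheses \eqref{eq:center} give $\mbe[f(Y_i,Y_k)\mid \mathcal{F}_{k-1}] = \int f(Y_i,y)\,d\bar\rho(y) = 0$ and $\mbe[f(Y_k,Y_i)\mid \mathcal{F}_{k-1}] = \int f(y,Y_i)\,d\bar\rho(y) = 0$ for $i<k$, whence $\mbe[D_k\mid \mathcal{F}_{k-1}]=0$. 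Thus $\{D_k\}$ is a martingale difference sequence for the filtration $\{\mathcal{F}_k\}$; this is the martingale structure underlying the exponential, and it uses both parts of \eqref{eq:center}.

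The crux is the bound on $\|D_k\|_{L^p}$ with the correct $\sqrt{k}$ growth. The key observation is that, conditionally on $Y_k=y$, the summands $\psi_i := f(Y_i,y)+f(y,Y_i)$, $1\le i<k$, are independent, bounded by $2\|f\|_{L^\infty}$, and centered (again by \eqref{eq:center}). Applying the sharp Marcinkiewicz--Zygmund inequality conditionally on $Y_k=y$ gives $\|D_k\|_{L^p(\,\cdot\,\mid Y_k=y)}^2 \le (p-1)\sum_{i<k}\|\psi_i\|_{L^p(\,\cdot\,\mid Y_k=y)}^2 \le 4(p-1)(k-1)\|f\|_{L^\infty}^2$ uniformly in $y$, hence $\|D_k\|_{L^p}\le 2\sqrt{(p-1)(k-1)}\,\|f\|_{L^\infty}$. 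I would stress that the conditioning on $Y_k$ used here is different from the filtration $\mathcal{F}_{k-1}$ used for the martingale property, and that the naive pointwise bound $|D_k|\le 2(k-1)\|f\|_{L^\infty}$ must be avoided: it yields only an $N^3$ total quadratic variation and a useless exponential estimate through Azuma--Hoeffding. Capturing the $\sqrt{k}$ (not $k$) scaling through the correct conditioning and sharp constants is exactly what makes the moment method succeed where the exponential-martingale approach fails, and is the main obstacle.

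Finally, applying the sharp martingale Marcinkiewicz--Zygmund inequality to $S_N=\sum_k D_k$ gives $\|S_N\|_{L^p}^2 \le (p-1)\sum_{k=2}^N \|D_k\|_{L^p}^2 \le 4(p-1)^2\|f\|_{L^\infty}^2 \sum_{k=2}^N(k-1) \le 2(p-1)^2\|f\|_{L^\infty}^2 N^2$, so that $\mbe|S_N|^p \le \big(\sqrt2\,p\,\|f\|_{L^\infty}N\big)^p$ for every integer $p\ge 1$ (the case $p\le 1$ being handled directly, using $\mbe[S_N]=0$). Since $\exp(\frac{1}{\eta N}S_N)\le \exp(\frac{1}{\eta N}|S_N|)$, expanding as a power series and using $p!\ge (p/e)^p$ yields $\mbe\big[\exp(\tfrac{1}{\eta N}S_N)\big] \le \sum_{p\ge 0}\frac{1}{p!}\big(\tfrac{\sqrt2\,p\,\|f\|_{L^\infty}}{\eta}\big)^p \le \sum_{p\ge 0}\big(\tfrac{\sqrt2\,e\,\|f\|_{L^\infty}}{\eta}\big)^p$, which is a geometric series equal to $2$ precisely when $\eta = 2\sqrt2\,e\,\|f\|_{L^\infty(\Pi^2,\bar\rho^{\otimes 2})}$, giving both the uniformity in $N$ and the stated value of $\eta(f)$.
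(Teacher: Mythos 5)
Your proof is correct and follows essentially the same route as the paper: the same decomposition $S_N=\sum_k D_k$ by largest index, the same double application of Rio's sharp Marcinkiewicz--Zygmund inequality to capture the $\sqrt{k}$ scaling of $\|D_k\|_{L^p}$, and the same Taylor-expansion-plus-Stirling step yielding $\eta = 2\sqrt{2}e\|f\|_{L^\infty}$. The only cosmetic difference is that you bound $\|D_k\|_{L^p}$ by conditioning on $Y_k=y$ and using the independent case of the inequality, whereas the paper phrases the identical argument as a second martingale difference sequence with respect to the augmented filtration $\sigma(Y_k,Y_1,\dots,Y_j)$.
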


As mentioned earlier, Lemma \ref{Concen} was first proved by Jabin and Wang \cite{JW18} (see Theorem 4 therein), 
and we will provide a shorter proof in the coming section, after completing the proof of the main result.
To finish the estimation, we apply this lemma to the second integral of \eqref{G-term}.
Prior to that, we first verify the marginal mean zero condition \eqref{eq:center}, required by the lemma, for the function $f_t(y,y')$.

\begin{lemma}
	The function $f_t(y,y')$ defined in \eqref{F-def} satisfies the marginal mean zero conditions \eqref{eq:center}:
	\begin{align*}
	\int _\Pi f_t(y,y')\bar \rho(t,y)dm(y) =0=	\int _\Pi f_t(y,y')\bar \rho(t,y')dm(y').
	\end{align*}
\end{lemma}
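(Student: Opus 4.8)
The plan is to exploit linearity. Since $f_t=\sum_{R\in\fr}F_t^R$ by \eqref{F-def}, it suffices to verify both marginal conditions for each summand $F_t^R$ separately, where $F_t^R(y,y')=A_t^R(y')+\hat A_t^R(y)-B_t^R(y,y')$ with $A_t^R,\hat A_t^R,B_t^R$ as in \eqref{A-def}, \eqref{B-def}. The two conditions are handled by mirror-image computations, so I would treat the first, $\int_\Pi F_t^R(y,y')\bar\rho(t,y)\,dm(y)=0$, in detail and deduce the second by symmetry.

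First I would dispose of the ``diagonal'' drift $\hat A_t^R(y)$. Multiplying $\hat A_t^R(y)$ by $\bar\rho(t,y)=u_\xi(x)$ clears the denominator $\bar\rho(x,\xi)$, turning the product into $\chi_{k'}(\xi)(\Phi_R*u_l)(x)u_k(x)-\chi_k(\xi)(\Phi_R*u_l)(x)u_\xi(x)$; summing over $\xi\in\fs$ via $\sum_\xi\chi_{k'}(\xi)=1$ and $\sum_\xi\chi_k(\xi)u_\xi=u_k$ collapses both terms to $(\Phi_R*u_l)(x)u_k(x)$, which cancel after integrating in $x$. Hence $\int_\Pi\hat A_t^R(y)\bar\rho(t,y)\,dm(y)=0$ on its own. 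The term $A_t^R(y')$ is constant in $y$, so it factors out against the total mass $\int_\Pi\bar\rho(t,y)\,dm(y)=1$, leaving exactly $A_t^R(y')$.

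The crux is then to show that the partial integral of the interaction term reproduces this leftover, i.e. $\int_\Pi B_t^R(y,y')\bar\rho(t,y)\,dm(y)=A_t^R(y')$. Multiplying $B_t^R$ by $u_\xi(x)$ cancels one copy of $\bar\rho(x,\xi)$, and summing over $\xi$ against $\chi_R^\pm(\xi,\xi')=\chi_{k}\chi_l$ or $\chi_{k'}\chi_{l'}$ (see \eqref{chi-def}) leaves an $x$-integral of $\Phi_R(x-x')$ against $u_k$. Here I would invoke the evenness $\Phi_R(x-x')=\Phi_R(x'-x)$ from \eqref{Phi-def} to write $\int_\mbx\Phi_R(x-x')u_k(x)\,dx=(\Phi_R*u_k)(x')$, which matches precisely the two terms of $A_t^R(y')$ in \eqref{A-def}. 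Combining the three pieces gives $A_t^R(y')+0-A_t^R(y')=0$, as desired. For the second condition I would integrate in $y'$: now $A_t^R(y')$ vanishes on its own by the same species-summation argument, $\hat A_t^R(y)$ factors out against the unit mass, and the $y'$-marginal of $B_t^R$ equals $\hat A_t^R(y)$, again via \eqref{Phi-def}, so the three pieces cancel.

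The computation is essentially bookkeeping, and I do not expect a serious analytic obstacle: everything is bounded and integrable because $\Phi_R\in L^1\cap L^\infty$ and $\bar\rho(t,\cdot)$ is bounded away from zero. The one point requiring care, and the structural reason the lemma holds, is the exact matching of the drift terms $A_t^R,\hat A_t^R$ with the one-variable marginals of the pairwise interaction $B_t^R$; this is precisely where the symmetry assumption \eqref{Phi-def} on the kernel enters, since without it $(\Phi_R*u_k)$ would be replaced by $(\Phi_R^-*u_k)$ with $\Phi_R^-(\cdot)=\Phi_R(-\cdot)$ and the cancellation would fail.
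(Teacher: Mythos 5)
Your proposal is correct and is essentially the paper's own proof: both reduce by linearity to a single reaction $R$, establish that the one-variable terms $A_t^R,\hat A_t^R$ integrate to zero against $\bar\rho$ (via clearing the denominator and summing the indicators over species), show that the marginals $\int_\Pi B_t^R(y,y')\bar\rho(t,y)\,dm(y)=A_t^R(y')$ and $\int_\Pi B_t^R(y,y')\bar\rho(t,y')\,dm(y')=\hat A_t^R(y)$ reproduce the leftover drift terms (using the evenness of $\Phi_R$ exactly as you indicate), and then cancel the three pieces. The only cosmetic difference is bookkeeping: the paper computes the identities for $A_t^R$ and $B_t^R$ and obtains the $\hat A_t^R$ versions by swapping $k$ and $l$, whereas you compute directly the ones needed for each marginal condition and invoke the mirror symmetry for the other.
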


\begin{proof}
	Recall from the definition \eqref{F-def} that $f_t(y,y')=\sum_{R\in \fr}F_t^R(y,y')$, where $F_t^R(y,y')=A_t^R(y')+\hat A_t^R(y)-B^R_t(y,y')$, with $A_t^R,\hat A_t^R, B_t^R$ from \eqref{A-def}, \eqref{B-def}. The lemma is a direct consequence of
	the following four identities:
	\begin{align*}
	&\int_\Pi A_t^R(y)\bar \rho(y)dm(y)=0=\int_\Pi \hat A_t^R(y)\bar \rho(y)dm(y),
	\end{align*}
	\begin{align*}
	\int_\Pi B_t^R(y,y')\bar \rho(y)dm(y)=A_t^R(y'),\qquad \int_\Pi B_t^R(y,y')\bar \rho(y')dm(y')=\hat A_t^R(y).
	\end{align*}
	Let us begin with the first one. By \eqref{A-def},
	\begin{align*}
	\int _\Pi A_t^R(y)\bar \rho(y)dm(y)&= \sum_{\xi=1}^n  \int_\mbx \cb{\chi_{l'}(\xi) (\Phi_R*u_k)(x)u_l(x) - \chi_{l}(\xi) \bar \rho(x,\xi) (\Phi_R*u_k)(x)} dx\\
	&= \int_{\mbx} [(\Phi_R*u_k)(x)u_l(x)-(\Phi_R*u_k)(x)u_l(x)]dx=0.
	\end{align*}
	This establishes the first identity.
	Swapping the role of $k$ and $l$, and replacing $l'$ by $k'$ in the computation above yields the second one.
	Now consider the third identity. Again by the definition of $B_t^R$ (see \eqref{B-def}), we have
	\begin{align*}
	&\qquad \int_{\Pi} B_t^R(y,y')\bar \rho(t,y)dm(y)\\
	&= \int _\Pi \Phi_R(x-x') \sqb{\chi_R^+(\xi,\xi') \frac{u_k(x)u_l(x')}{\bar \rho(y)\bar \rho(y')} -\chi_R^-(\xi,\xi') } \bar \rho(y)dm(y)\\
	&= \frac{u_l(x')}{\bar \rho(y')} \sum_{\xi=1}^n \int _\mbx \chi_R^+(\xi,\xi') \Phi_R(x-x')u_k(x)dx- \sum_{\xi=1}^n\int_\mbx  \chi_R^-(\xi,\xi')\Phi_R(x-x')\bar \rho(x,\xi)dx\\
	&= \frac{u_l(x')}{\bar \rho(y')} \chi_{l'}(\xi') (\Phi_R*u_k)(x')- \chi_{l}(\xi') (\Phi_R*\bar \rho)(x',\xi)=A^R_t(y').
	\end{align*}
	The last step is due to $\sum_{\xi=1}^n \chi_R^+(\xi,\xi')=\chi_{l'}(\xi')$, $\sum_{\xi=1}^n \chi_R^-(\xi,\xi')=\chi_{l}(\xi')$ (see \eqref{chi-def}).
	Again, swapping the role of $k,l$ gives the fourth identity.

From these calculations, it follows that
\begin{align}
\int_\Pi f_t(y,y')\bar \rho(t,y)dm(y) & = \sum_{R\in \fr} \int_\Pi \left( A_t^R(y')+\hat A_t^R(y)-B^R_t(y,y')\right) \bar \rho(t,y)dm(y) \nonumber \\ 
 & =  \sum_{R\in \fr} A_t^R(y') + 0 - A_t^R(y') = 0, \nonumber
\end{align}
and similarly for $\int_\Pi f_t(y,y')\bar \rho(t,y')dm(y') = 0$.
\end{proof}

Applying Lemma \ref{Concen} to the integral from \eqref{G-term} with $\bar \rho(t),f_t$ in place of $\bar \rho,f$, and recall the quantity \eqref{K-def}, it follows
\begin{align*}
G_1(t)&\le \eta W_N(t)+ \f {\eta\log 2} N ,\qquad  \eta := 2\sqrt 2e\|f_t\|_{L^\infty(\Pi^2,\bar \rho^{\otimes 2})}\le 2\sqrt 2 eK_t.
\end{align*}

\subsection{Conclusion}
Substituting $D \leq 0$ and the estimates for $G_1,G_2$ into \eqref{RE-ineq}, we have
\begin{align}\label{diff-eq}
W_N'(t)&\le CK_t \rb{W_N(t)+\f {1} N},
\end{align}
for some universal constant $C>0$ and $K_t$ from \eqref{K-def}.
Let us now get a bound for $K_t$, assuming $\bar \rho$ satisfies the comparability condition \eqref{comparability}.
Note that if $\mbx=\mathbb{T}^d$, then the solution $\bar \rho(t)$ of the system \eqref{MFE} is locally bounded and strictly positive,
because of the assumption of Theorem \ref{main} $\bar \rho_0\in L^\infty(\Pi), \inf_{y\in \Pi}\bar\rho(0,y)>0$  (see Propositions \ref{well-posed}, \ref{positive}).
Therefore, for every $T>0$, the condition \eqref{comparability} holds for some constant $C_T>0$ depending on $\bar \rho_0$.
By \eqref{A-def}, \eqref{B-def}, and Young's inequality, it follows
\begin{align*}
\sup_{y\in \Pi,t\in [0,T]} \cb{|A_t^R(y)|,|\hat A_t^R(y)|}&\le \|\Phi_R\|_{L^\infty}\rb{C_T+1} ,\\
\sup_{(y,y')\in \Pi^2,t\in [0,T]} |B_t^R(y,y')|&\le \|\Phi_R\|_{L^\infty}\rb{C_T^2+1}.
\end{align*}
Hence, we have
\begin{align*}
\sup_{t\in [0,T]} K_t &\le\rb{C_T^2+2C_T+3}  \sum_{R\in \fr} \|\Phi_R\|_{L^\infty}=:\til C_T\|\Phi\|_{L^\infty},
\end{align*}
where recall that $\|\Phi\|_{L^\infty}=\sum_{R\in\fr}\|\Phi_R\|_{L^\infty}<\infty$, which is bounded
(as $\fr$ is a finite set).
Now applying Gr\"onwall lemma to \eqref{diff-eq}, we establish the bound
\begin{align*}
W_N(t) \le e^{C t \til C_T\|\Phi\|_{L^\infty}}  W_N(0)+ \frac{e^{C t \til C_T\|\Phi\|_{L^\infty}} - 1}{N}, \quad \quad \forall \;\;t \in [0,T].
\end{align*}
The desired estimate from \eqref{RE-bound} follows, if we set $\Lambda_T=C \til C_T$. 
This finishes the proof of the main theorem.


\section{The Large Deviation Inequality (Proof of Lemma \ref{Concen})} \label{LDP-ineq}
The main objective of this section is to prove the large deviation inequality \eqref{exp-mom} from Lemma \ref{Concen}.
Before proceeding to its proof, let us first make a few comments about the result.
In Lemma \ref{Concen} we do not require that $f$ is continuous.
In fact, if $f$ is bounded and continuous on $\Pi^2$, then the classical large deviation principle of empirical measures (see e.g. \cite{BB90}) would imply that
\begin{align*}
\limsup_{N\rightarrow \infty} e^{N m}\cdot  \mbe \sqb{\exp\rb{\f 1 {\eta N} \sum_{i,j=1,i\neq j}^N f(Y_i,Y_j)}  } < \infty,
\end{align*}
where the constant $m$ is characterized by
\begin{align*}
m & = \inf_{\mu \in \mathcal{P}(\Pi)} \Big\{\mch (\mu \| \bar{\rho}) - \iint \frac{f(x, y)}{\eta} \mu(dx) d\mu(dy)\Big\}
\\
& =  \inf_{\mu  \in \mathcal{P}(\Pi)} \Big\{\mch (\mu \| \bar{\rho}) - \iint \frac{f(x, y)}{\eta} (d\mu(x) -d\bar{\rho}(x))  (d\mu(y) -d\bar{\rho}(y)) \Big\}.
\end{align*}
The last line above follows from the mean zero condition \eqref{eq:center}. From above one obtains that $m = 0$ if $\eta $ is chosen large enough, which proves
\begin{align*}
\lim_{N\to\infty} \f 1 N \log \mbe \sqb{\exp\rb{\f 1 {\eta N} \sum_{i,j=1,i\neq j}^N f(Y_i,Y_j)}  }= 0.
\end{align*}
Despite being a weaker estimate compared to \eqref{exp-mom}, this qualitative result alone is sufficient to 
conclude the propagation of chaos of the particle system (at least for the case of reaction kernels $\Phi_R$ being continuous), except without an explicit bound. The quantitative estimate \eqref{exp-mom} was proved by Jabin and Wang \cite{JW18} under the assumption that $f$ satisfies
$$
C\sup_{p\geq 1} \Big(\frac{\|\sup_z |f(\cdot, z)|\|_{L^p(\bar{\rho}dx)}}{p}\Big)^2 <1 
$$
for some constant $C>0$, which is a weaker assumption than $\|f\|_{L^\infty} <\infty$. Their proof relies on some sophisticated
combinatorial analysis by taking  account of cancellations due to the mean zero condition \eqref{eq:center}.
We provide a simple probabilistic alternative.
Our proof to Lemma \ref{Concen} relies on a characterization of the exponential moment, given as follows:
\newcommand{\mcf}{\mathcal {F}}

\begin{lemma}\label{exp-lem}
	Let $Z$ be a random variable satisfying the bound for some $\ga>0$:
	\begin{align*}
	\sup_{k\in \mbn} k^{-1} |\mbe Z^k|^{1/k}\le \ga.
	\end{align*}
	Then it holds
	\begin{align*}
	\mbe[e^{(2e\ga)^{-1}Z}]\le 2.
	\end{align*}
\end{lemma}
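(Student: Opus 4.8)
The plan is to expand the exponential in a power series and estimate it term by term, using the moment hypothesis together with Stirling's inequality $k!\ge (k/e)^k$. First I would restate the assumption in the equivalent form $|\mbe Z^k|\le (\ga k)^k$ for every $k\in\mbn$, and fix the scaling $\la=(2e\ga)^{-1}$, noting the crucial numerology $\la\ga e=\tfrac12$. The formal target is then
\begin{equation*}
\mbe[e^{\la Z}]=\sum_{k=0}^\infty \frac{\la^k}{k!}\mbe[Z^k]\le \sum_{k=0}^\infty \frac{\la^k(\ga k)^k}{k!}\le 1+\sum_{k=1}^\infty (\la\ga e)^k=1+\sum_{k=1}^\infty 2^{-k}=2,
\end{equation*}
where the middle inequality inserts the moment bound and Stirling. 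The only genuine issue is justifying the very first equality, i.e.\ the interchange of $\mbe$ with the infinite sum.

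The main obstacle, and the one step I would treat carefully, is this integrability/interchange. To handle it I would first control $\mbe[e^{\la|Z|}]$ via the even moments, which are automatically nonnegative. Writing $e^{\la Z}+e^{-\la Z}=2\sum_{m=0}^\infty \tfrac{\la^{2m}}{(2m)!}Z^{2m}$, every summand is nonnegative, so Tonelli's theorem applies and gives
\begin{equation*}
\mbe\bigl[e^{\la|Z|}\bigr]\le \mbe\bigl[e^{\la Z}+e^{-\la Z}\bigr]=2\sum_{m=0}^\infty \frac{\la^{2m}}{(2m)!}\,\mbe[Z^{2m}]\le 2\sum_{m=0}^\infty \frac{\la^{2m}(2\ga m)^{2m}}{(2m)!}\le 2\sum_{m=0}^\infty 4^{-m}<\infty,
\end{equation*}
using $\mbe[Z^{2m}]\le (2\ga m)^{2m}$ and $(2m)!\ge (2m/e)^{2m}$, so that each term is at most $(\la\ga e)^{2m}=4^{-m}$. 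Thus $e^{\la|Z|}$ is an integrable majorant for the partial sums $\sum_{k=0}^{K}\tfrac{\la^k}{k!}Z^k$, since these are dominated in absolute value by $\sum_{k=0}^\infty \tfrac{\la^k|Z|^k}{k!}=e^{\la|Z|}$.

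With integrability in hand, dominated convergence legitimizes $\mbe[e^{\la Z}]=\sum_{k\ge 0}\tfrac{\la^k}{k!}\mbe[Z^k]$, and the displayed chain of inequalities in the first paragraph then closes the argument, yielding $\mbe[e^{(2e\ga)^{-1}Z}]\le 2$. I would record that the bound $2$ is exact precisely because the scaling $\la=(2e\ga)^{-1}$ makes the geometric ratio equal to $\tfrac12$; this is also the source of the constant $2\sqrt2\,e\,\|f\|_{L^\infty}$ appearing when Lemma~\ref{exp-lem} is later applied to $Z=\sum_{i\neq j}f(Y_i,Y_j)$ in the proof of Lemma~\ref{Concen}.
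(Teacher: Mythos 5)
Your proof is correct and follows essentially the same route as the paper's: Taylor expansion of the exponential, the moment bound $|\mbe Z^k|\le(\ga k)^k$, Stirling's inequality $k^k/k!\le e^k$, and the geometric series with ratio $\la\ga e=\tfrac12$. The only difference is that you additionally justify the interchange of expectation and summation via the even-moment/Tonelli majorant $e^{\la|Z|}$, a step the paper performs without comment; this is a legitimate refinement rather than a different argument.
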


\begin{proof}
	Let $\eta=2 e \ga$. Expanding $e^{\eta^{-1}Z}$ by Taylor series, and using the assumption,
	\begin{align*}
	\mbe \sqb{e^{\eta^{-1}Z} } &=\sum_{k=0}^\infty \frac{1}{\eta ^k k!} \mbe\sqb{Z^k}
	\le \sum_{k=0}^\infty \frac{\ga^k k^k}{\eta^kk!}\le \sum_{k=0}^\infty \rb{\frac{e\gamma}{\eta}}^k=\sum_{k=0}^\infty 2^{-k}=2.
	\end{align*}
	The only inequality above follows by the simple inequality $k^k(k!)^{-1}\le e^k$, which follows from the Stirling's approximation.
\end{proof}

In the proof of \eqref{exp-mom} we use \emph{a Marcinkiewicz-Zygmund type inequality} for martingales, which, roughly speaking, 
bounds the $L^p$-norm of a martingale by the root sums squared of the $L^p$-norm of its martingale increments.
Specifically, we will use the following sharp version of inequality \eqref{eq:martingale}, due to Rio \cite{Rio09}.
When $p=2$, \eqref{eq:martingale} holds as an equality, which is due to the It\^{o} isometry for discrete martingales.
Also, the constant $p-1$ on the right hand side of \eqref{eq:martingale} is known to be sharp.
We point out also the sharpness of this estimate, specifically the growth rate as $p\to\infty$, plays a critical role in our argument.

\begin{lemma}[{\cite[Theorem 2.1]{Rio09}}]
\label{eq:MZ}
Let $p \geq 2$, and $\{X_k\}_{k\ge 1}$ be a sequence of $L^p$-martingale differences with respect to a filtration $\{\mcf_k\}_{k\ge 0}$. 
That is, for each $k\ge 1$, $X_k$ is $\mcf_k$-measurable, $X_k\in L^p$, and $\mbe [X_k|\mcf_{k-1}]=0$.
Let $S_n=\sum_{k=1}^nX_k$ (which is a martingale). Then for all $n\ge 1$ we have
\begin{equation}\label{eq:martingale}
\|S_n\|_{L^p}^2\leq (p-1) \sum_{k=1}^n\|X_k\|_{L^p}^2.
\end{equation}
\end{lemma}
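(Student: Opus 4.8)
The plan is to deduce Lemma~\ref{eq:MZ} from a single \emph{sharp $2$-smoothness} estimate together with a one-line induction. The building block I would isolate is the conditional inequality
\begin{align}\label{plan:smooth}
\|M+X\|_{L^p}^2 \le \|M\|_{L^p}^2 + (p-1)\|X\|_{L^p}^2,
\end{align}
valid whenever $M$ is measurable with respect to some sub-$\sigma$-algebra $\mcf$, $X\in L^p$, and $\mbe[X\mid \mcf]=0$. Granting \eqref{plan:smooth}, the lemma follows by induction on $n$: the base case $n=1$ reads $\|X_1\|_{L^p}^2\le (p-1)\|X_1\|_{L^p}^2$, which holds since $p\ge 2$, and for the inductive step I apply \eqref{plan:smooth} with $\mcf=\mcf_{n-1}$, $M=S_{n-1}$, $X=X_n$, using $\mbe[X_n\mid \mcf_{n-1}]=0$, so that $\|S_n\|_{L^p}^2\le \|S_{n-1}\|_{L^p}^2+(p-1)\|X_n\|_{L^p}^2\le (p-1)\sum_{k=1}^n\|X_k\|_{L^p}^2$.

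To prove \eqref{plan:smooth} I would run a second-order Taylor argument in a scaling parameter. Set $g(t)=\mbe[|M+tX|^p]$ and $\phi(t)=g(t)^{2/p}=\|M+tX\|_{L^p}^2$ for $t\in[0,1]$. Since $p\ge 2$ the map $x\mapsto |x|^p$ is $C^2$ with continuous second derivative $p(p-1)|x|^{p-2}$, so after justifying differentiation under the expectation one has $g'(t)=p\,\mbe[|M+tX|^{p-2}(M+tX)X]$ and $g''(t)=p(p-1)\,\mbe[|M+tX|^{p-2}X^2]$. The mean-zero hypothesis enters exactly once, at $t=0$: because $M$ is $\mcf$-measurable and $\mbe[X\mid\mcf]=0$, we get $g'(0)=p\,\mbe[|M|^{p-2}M\,\mbe[X\mid\mcf]]=0$, hence $\phi'(0)=\tfrac{2}{p}g(0)^{2/p-1}g'(0)=0$.

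It then remains to bound $\phi''$ uniformly. Writing $\phi''(t)=\tfrac{2}{p}g^{2/p-2}\big[(\tfrac{2}{p}-1)(g')^2+g\,g''\big]$ and using $\tfrac{2}{p}-1\le 0$ (again $p\ge 2$) to discard the first bracketed term, I am left with $\phi''(t)\le \tfrac{2}{p}g^{2/p-1}g''$. An application of H\"older's inequality with exponents $\tfrac{p}{p-2}$ and $\tfrac{p}{2}$ gives $\mbe[|M+tX|^{p-2}X^2]\le g(t)^{(p-2)/p}\|X\|_{L^p}^2$, whence $g''(t)\le p(p-1)g(t)^{(p-2)/p}\|X\|_{L^p}^2$; substituting, the powers of $g$ cancel exactly and $\phi''(t)\le 2(p-1)\|X\|_{L^p}^2$ for every $t$. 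Finally Taylor's formula with integral remainder, $\phi(1)=\phi(0)+\phi'(0)+\int_0^1(1-t)\phi''(t)\,dt$, combined with $\phi'(0)=0$ and this bound, yields $\phi(1)\le \phi(0)+(p-1)\|X\|_{L^p}^2$, which is precisely \eqref{plan:smooth}.

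The main obstacle is analytic bookkeeping rather than any conceptual difficulty. I must justify differentiating $g$ twice under the expectation and ensure $\phi$ is $C^2$ even where $M+tX$ vanishes; for $p\ge 2$ this is routine by dominated convergence, since $|M+tX|\le |M|+|X|$ for $t\in[0,1]$ lets me dominate the integrands $|M+tX|^{p-1}|X|$ and $|M+tX|^{p-2}X^2$ by fixed $L^1$ functions (via Young's and H\"older's inequalities), and $|x|^{p-2}$ is continuous when $p\ge 2$. The genuinely degenerate configurations, where $g(t_0)=0$ for some $t_0$ and one cannot divide by $g$, all force $M$ to be a scalar multiple of $X$, in which case $\phi(t)=(t-t_0)^2\|X\|_{L^p}^2$ and \eqref{plan:smooth} is elementary; I would dispatch these separately. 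I would also emphasize that the sharp constant $p-1$ is lossless precisely because both uses of $p\ge 2$ (dropping $(\tfrac{2}{p}-1)(g')^2$ and the H\"older split $\tfrac{p-2}{p}+\tfrac{2}{p}=1$) introduce no slack.
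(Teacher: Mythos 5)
Your argument is correct, but there is nothing in the paper to compare it against: the paper does not prove this lemma at all, it simply quotes it from Rio \cite{Rio09}, and uses it as a black box in the proof of Lemma \ref{Concen}. What you have written is, in essence, the standard proof of Rio's theorem: your single-step inequality $\|M+X\|_{L^p}^2 \le \|M\|_{L^p}^2 + (p-1)\|X\|_{L^p}^2$ for $\mcf$-measurable $M$ and $\mbe[X\mid\mcf]=0$ is exactly the statement that $L^p$ is $2$-smooth with the sharp constant $\sqrt{p-1}$ (going back to Pinelis), and Rio's own proof proceeds by the same Taylor-expansion recursion along the martingale. Your analytic details check out: $|x|^p$ is $C^2$ for $p\ge 2$ with second derivative $p(p-1)|x|^{p-2}$; the dominations $(|M|+|X|)^{p-1}|X|$ and $(|M|+|X|)^{p-2}X^2$ are integrable by H\"older, justifying two differentiations of $g(t)=\mbe|M+tX|^p$ under the expectation; $g'(0)=p\,\mbe\bigl[|M|^{p-2}M\,\mbe[X\mid\mcf]\bigr]=0$ is where the martingale-difference property enters; and the H\"older split with exponents $\tfrac{p}{p-2}$, $\tfrac{p}{2}$ makes the powers of $g$ cancel exactly in $\phi''\le 2(p-1)\|X\|_{L^p}^2$. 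One small remark: the degenerate case is even easier than you suggest. If $g(t_0)=0$ for some $t_0\in[0,1]$, then $M=-t_0X$ a.s.; if $t_0\neq 0$ this makes $X$ $\mcf$-measurable, whence $X=\mbe[X\mid\mcf]=0$ a.s.\ and everything is trivial, while $t_0=0$ gives $M=0$ and the claim reduces to $\|X\|_{L^p}^2\le (p-1)\|X\|_{L^p}^2$, i.e.\ the base case $p\ge 2$. So no separate computation with $(1-t_0)^2\le t_0^2+(p-1)$ is needed, though it too holds since $t_0\ge 0$.
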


Now we are ready to prove Lemma \ref{Concen}.
\begin{proof}[Proof of Lemma \ref{Concen}]
	Let $\{Y_j\}_{j\in \mbn}$ be a sequence of $\Pi$-valued i.i.d. random variables,
	and $f\in L^\infty(\Pi^2,\bar \rho^{\otimes 2})$ as in the lemma.
	For $N\ge 2$, denote the random variables
	\begin{align*}
	A_N(f)& := \f 1 N \sum_{i,j=1, i\neq j}^N f(Y_i,Y_j),\qquad M_N:=NA_N(f)=\sum_{i,j=1,  i\neq j}^N f(Y_i,Y_j).
	\end{align*}
	By Lemma \ref{exp-lem}, the statement \eqref{exp-mom} follows if we show the following uniform-in-$N$ bound is valid:
	\begin{align}\label{Lk-bound}
	\sup_{k\in \mbn} k^{-1}|\mbe A_N(f)^k|^{1/k}\le \sqrt{2}\|f\|_{L^\infty(\Pi;\bar \rho^{\otimes 2})}.
	\end{align}

	We first write $M_N$ as the sum of martingale differences.
	Indeed, we have
	\begin{align*}
	M_N&=\sum_{k=1}^N D_k,  \quad\mbox{where } D_k= \sum_{i=1}^{k-1}f(Y_i,Y_k)+\sum_{j=1}^{k-1}f(Y_k,Y_j),
	\end{align*}
	($D_1$ is set to be zero).
	Observe here that $\{D_k\}_{k=1,\cdots, N}$ forms a $L^p$ martingale difference w.r.t. the 
	filtration $\{\mcf_k=\si(Y_1,\cdots, Y_k)\}_{k\ge 0}$ ($\mcf_0$ is set to be the trivial $\si$-algebra) for any $p\in [1,\infty)$ (and hence $\{M_k\}_{1\le k\le N}$ is an $L^p$-martingale).
	Indeed, each $D_k\in L^p$ because $f$ is bounded. Moreover, the marginal mean zero condition \eqref{eq:center} implies
	\begin{align*}
	\mbe [D_k|\mcf_{k-1}]&= \sum_{i=1}^{k-1}\mbe[f(Y_i,Y_k)+f(Y_k,Y_i)|Y_1,\cdots, Y_{k-1}]=0.
	\end{align*}
	Using Lemma \ref{eq:MZ} (with $N,M_N,D_k$ in place of $n,S_n,X_k$), we establish the following bound for all $p\ge 2$:
	\begin{align}\label{M-ineq}
	\|M_N\|_{L^p}\le \sqrt{p-1}\rb{\sum_{k=1}^N \|D_k\|_{L^p}^2 }^{1/2}.
	\end{align}

	Now consider $\|D_k\|_{L^p}$ for each $1\le k\le N$.
	We again write $D_k$ as a sum of martingale difference, namely,
	\begin{align*}
	D_k=\sum_{j=1}^{k-1} B^k_j,\qquad \mbox{where } B^{k}_j=f(Y_k,Y_j)+f(Y_j,Y_k)\quad  \mbox{ for }1\le j\le k-1,
	\end{align*}
	Note that $\|B^{k}_j\|_{L^p}\le 2\|f\|_{L^\infty(\Pi^2,\bar \rho^{\otimes 2})}$ for each $k,j$, and $\{B_j^k\}_{1\le j\le k-1}$ 
	again forms a sequence of martingale differences w.r.t. the filtration $\{\til \mcf_j\}_{0\le j\le k-1}$, 
	where $\mcf_j=\si(Y_k,Y_1,\cdots, Y_j)$. Specifically, $\mbe [B_j^k|\til \mcf_{j-1} ]=0$ follows directly from the marginal mean zero condition \eqref{eq:center}.
	So we may again apply Lemma \ref{eq:MZ} (with $k-1,D_k,B^{k}_j$ in place of $n,S_n,X_k$) to obtain
	\begin{align*}
	\|D_k\|_{L^p}&\le \sqrt{p-1} \rb{ \sum_{j=1}^{k-1} \|B_j^k\|_{L^p}^2 }^{1/2} \le 2 \sqrt {(k-1)(p-1)} \|f\|_{L^\infty(\Pi^2)}.
	\end{align*}
	Inserting this estimate into \eqref{M-ineq}, it follows
	\begin{align*}
	\| M_N\|_{L^p}&\le 2(p-1)\|f\|_{L^\infty(\Pi^2)} \rb{\sum_{k=1}^N (k-1)}^{1/2}\le \sqrt{2} (p-1)N\|f\|_{L^\infty(\Pi^2)} .
	\end{align*}

	Now we return to $A_N(f)=N^{-1} M_N$. Using the $L^p$-bound for $ M_N$ that we just established, we have for all $k\ge 2$ that
	\begin{align*}
	\f 1 k \rb{|\mbe A_N(f)|^k}^{1/k}&\le \f 1 {Nk} |\mbe M_N^k|^{1/k}
	\le \f 1 {Nk}\| M_N\|_{L^k}
	\le \sqrt 2 \|f\|_{L^\infty(\Pi^2)}.
	\end{align*}
	For $k=1$, we have $|\mbe A_N(f)|= 0$ by \eqref{eq:center}. This concludes the proof of \eqref{Lk-bound}.
\end{proof}


\section{Appendix: Well-posedness and Regularity of Semilinear Parabolic Systems}\label{appen}
In this appendix, we provide a brief discussion on the systems \eqref{FP-eq} and \eqref{MFE}, particularly,  the well-posedness of the correspondent Cauchy problem, and regularity of solutions.
These results follow by the classical theory of semigroups and the standard construction of solutions to ODEs using the contraction mapping theorem.
Though elementary, for the sake of completeness we also present their proofs.
For a detailed discussion, we point the reader to, for instance, \cite{Pazy}.

To state a result that is applicable for both the (linear) Fokker-Planck equation \eqref{FP-eq} on $\Pi^N$ and the (nonlinear) mean field limit system \eqref{MFE} on $\Pi$, we will consider a general form of equation. Let $\mbx=\mathbb{T}^D$ or $\mbr^D$, with spatial dimension $D\in \mbn$, $G$ be a finite set of indices, and denote $\Ga=\mbx\times G$. Denote also the variables $y=(x,\xi)\in \mbx\times G$, and the measure $dm=dx\otimes d\#$ on $\mbx\times G$, where $\#$ denotes the counting measure on the index set $G$. 
For $p\in [1,\infty]$, denote the Banach space and norm
\begin{align*}
X_p=L^1(\Ga,dm)\cap L^p(\Ga,dm),\qquad \|\cdot\|_{X_p}=\| \cdot \|_{L^1(\Ga)}+\|\cdot\|_{L^p(\Ga)},
\end{align*}
and $X_p^+\subset X_p$ be the (closed) subset of all nonnegative $(L^1\cap L^p)(\Ga)$-functions. 
Similarly, for $k\ge 0$ and $p\in [1,\infty]$, we denote $W^{k,p}(\Ga)$ the Banach space of all functions $f$ on $\Ga$ so that $f(\cdot,\xi)\in W^{k,p}(\mbx)$ for every $\xi\in G$, with the norm $\|f\|_{W^{k,p}(\Ga)}= \sum_{\xi\in G} \|f(\cdot,\xi)\|_{W^{2,p}(\mbx)}$.

We consider the evolution equation on $\mbx\times G$, given by
\begin{align}\label{ODE2}
\partial_t \rho = A\rho + T^-(\rho)+T^+(\rho),\quad (t,x,\xi)\in (0,\infty)\times \mbx\times G,\qquad \rho(0)=\rho_0\in X_p^+,
\end{align}
where $A$ is the $D$-dimensional elliptic operator on $\mbx\times G$ given by
\begin{align}
A\rho(x,\xi)&= \sum_{k=1}^D \al_k(\xi) \partial_{k}^2\rho(x,\xi), \qquad \al_k(\xi)>0\mbox{ for all }1\le k\le D,\xi\in G,\label{ellip}
\end{align}
and $T^\pm:L^1_{loc}(\Ga)\to L^1_{loc}(\Ga)$ (possibly nonlinear).
If regarding $\xi\in G$ as an index, one may view the forward equation \eqref{ODE2} as a parabolic system with $n=|G|$ equations in variables $(t,x)\in (0,\infty)\times \mbx$. 
Namely, for each $\xi\in G$, the component $u_\xi(t,x) =\rho(t,x,\xi)$ satisfies the parabolic equation
\begin{align}\label{parabolic}
\partial _t u_\xi = A_\xi u_\xi + T^-(\rho)(t,x,\xi) +T^+(\rho)(t,x,\xi),\quad (t,x)\in (0,\infty)\times \mbx,
\end{align}
where $A_\xi$ is the elliptic operator on $\mbx$ given by \eqref{ellip} with $\xi\in G$ fixed.

Throughout this section we assume the following hypothesis for the maps $T^\pm$: it holds for some constant $C>0$ that
\begin{enumerate}[label=(T\arabic*)]
	\item (\textbf{Boundedness and local Lipschitz continuity}). 
	For every $p\in [1,\infty)$ and $\rho,\hat \rho\in X_p$, it holds 
	\begin{align}
	\|T^\pm(\rho)\|_{X_p}&\le C\max\{1,\|\rho\|_{L^1(\Ga )}\} \| \rho\|_{X_p}, \nonumber \\
	\|T^\pm(\rho)-T^\pm(\hat \rho)\|_{X_p}&\le C\max\{1,\|\rho \|_{X_p},\|\hat \rho\|_{X_p}\} \|\rho-\hat \rho\|_{X_p}; \label{Lip-bound} 
	\end{align}
	\item (\textbf{Nonnegativity of $T^+$}). $T^+$ maps nonnegative functions to nonnegative functions;
	\item (\textbf{Pointwise bound of $T^-$}). For every $\rho\in L^1(\Ga)$, we have
	\begin{align*}
	|T^-(\rho)(x,\xi)|\le C\max\{1,\|\rho\|_{L^1(\Ga)}\} |\rho (x,\xi)|,\quad \mbox{ for $m$-a.e. }(x,\xi)\in \Ga;
	\end{align*} 
	\item (\textbf{Mass conservation}). For every $\rho\in L^1(\Ga)$, it holds
	\begin{align*}
	\int_{\Ga} [T^-(\rho) +T^+(\rho)]dm =0. 
	\end{align*}
\end{enumerate}

Both forward equations \eqref{FP-eq} and \eqref{MFE} are special cases of \eqref{ODE2}. For the Fokker-Planck equation \eqref{FP-eq} on $\Pi^N$, we have the spatial domain $\mbx=\mathbb{T}^{dN}$ or $\mbr^{dN}$, the index set $G=\fs^N$ (and so $D=dN$, $\Ga=\Pi^N$),
$A=\bDe_N$ from \eqref{Ga-def}, and the maps $T^\pm$, by \eqref{S-def}, are given by
\begin{align*}
T^-(\rho)(\by)&= -\f 1 N \sum_{R\in \fr} \sum_{\substack{i,j=1, i\neq j}}^N  \Phi_R(x_i-x_j)\chi_R^-(\xi_i,\xi_j)\psi(\by) ,\qquad T^+= \bS_N^*-T^-.
\end{align*}
Since in this case $T^\pm$ are bounded linear in $X_p$ (as $\Phi_R\in L^\infty$), (T1)--(T4) hold with $C>0$ depending on $N$ and $\|\Phi\|_{L^\infty}=\sum_{R\in \fr}\|\Phi_R\|_{L^\infty}$. 
((T4) is a straightforward computation from the definition \eqref{S-def} of $\bS_N^*$.)
For the limit system \eqref{MFE}, we have $\mbx = \mathbb{T}^d$ or $\mbr^d$, $G=\fs$ (hence $D=d$, $\Ga=\Pi$), and $A=\f 12 \si(\xi)^2 \De$. The maps $T^\pm$ are given by $T^\pm = \sum _{R\in\fr} \bar T_R^\pm$ from \eqref{T-def}.
Conditions from (T2)--(T4) can be easily verified. 
For (T1), the boundedness condition (with $C$ depending on $\|\Phi\|_{L^\infty}$) is a direct consequence of Young's inequality and the definition of $\bar T_R^\pm$ from \eqref{TR-def}. 
To show the local Lipschitz bound \eqref{Lip-bound}, it suffices to show \eqref{Lip-bound} for a fixed $\bar T_R^\pm$, $R\in \fr$.
For any $\bar \rho,\hat \rho\in X_p$, by \eqref{TR-def} it holds for all $(x,\xi)\in \Pi$ that
\begin{align*}
|[\bar T_R^\pm(\bar \rho)-\bar T_R^\pm(\hat \rho)](x,\xi)|&\le 2|(\bar \rho-\hat \rho)(x,\xi)| \|\Phi_R*\bar \rho\|_{L^\infty(\Pi)}+2 |\hat \rho(x,\xi)| \|\Phi_R*(\bar \rho-\hat \rho)\|_{L^\infty(\Pi)}.
\end{align*}
Taking $\|\cdot\|_{X_p}$-norm for the above, and applying Young's inequality, we have
\begin{align*}
\|\bar T_R^\pm(\bar \rho)-\bar T_R^\pm (\hat \rho)\|_{X_p}&\le 2\|\bar \rho-\hat \rho\|_{X_p} \|(\Phi_R*\bar \rho)(\cdot ,\xi)\|_{L^\infty(\Pi)} +2 \|\hat \rho\|_{X_p} \|\Phi_R*(\bar \rho-\hat \rho)\|_{L^\infty(\Pi)}\\
&\le 2\|\Phi_R\|_{L^\infty} \sqb{\|\bar \rho\|_{L^1(\Pi)}\|\bar \rho-\hat \rho\|_{X_p}+ \|\hat \rho\|_{X_p} \|\bar \rho-\hat \rho\|_{L^1(\Pi) } }\\
&\le 4 \|\Phi_R\|_{L^\infty}\max\{\|\bar \rho\|_{X_p},\|\hat \rho\|_{X_p}\}\|\bar \rho-\hat \rho\|_{X_p}.
\end{align*}
Hence, in this case $T^\pm$ also satisfy (T1)--(T4).

Notice that, for any $p\in [1,\infty)$, $A$ is the generator of a contractive (analytic) semigroup on $X_p=(L^1\cap L^p)(\Ga,dm)$, and 
denote $\{e^{tA}\}_{t\ge 0}$ the correspondent semigroup.
This semigroup is in fact \emph{mass-conserving}, in the sense that
\begin{align}\label{mass-consv}
\int_\Ga e^{tA}fdm=\int_\Ga fdm,\quad \forall f\in L^1(\Ga),\quad t\ge 0.
\end{align}
Moreover, by the estimate of heat potentials, the semigroup has the following $L^p\to L^r$ bound for any $1\le p\le r\le \infty$:
\begin{align}\label{LpLr-bound}
\|e^{tA}f\|_{L^r(\Ga)}&\le C_{D,p,r}t^{-\f{D}{2}(\f 1 p - \f 1 r)}\|f\|_{L^p(\Ga)}.
\end{align}
We say a $X_p$-valued function $\rho\in C([0,t_0);X_p)$ for some $t_0\in (0,\infty]$ is a \emph{mild solution} to the equation \eqref{ODE2} if
\begin{align}\label{mild}
\rho(t)&= e^{tA}\rho_0+\int _0^t e^{(t-s)A}[T^-(\rho)+T^+(\rho)](s)ds,\qquad \forall t\in [0,t_0).
\end{align}
It is a \emph{local solution} if $t_0<\infty$, and a \emph{global solution} if $t_0=\infty$.
Notice also if a mild solution possesses higher regularity, namely, $\rho\in C((0,t_0);W^{2,p}(\Ga))\cap C^1((0,t_0);X_p)$, then it is a \emph{strong solution}.
If it is $C^1$ in time, $C^2$ in space (with respect to $x\in \mbx$), then it is a \emph{classical solution}.

The main result of this section, which applies to both Fokker Planck equation \eqref{FP-eq} and mean field limit equation \eqref{MFE}, is stated as follows.
\begin{proposition}\label{well-posed}
	Let $A$ be from \eqref{ellip} and $T^\pm$ satisfy (T1)--(T4).
	For every nonnegative $L^1(\Ga)$ initial data $\rho_0$, the Cauchy problem \eqref{ODE2} admits a unique global mild solution. 
	The solution is nonnegative, and mass-conserving, in the sense that
	\begin{align}\label{mass-conserved}
	\rho(t)\ge 0,\qquad \int_{\Ga} \rho(t)dm=\int_{\Ga}\rho_0dm,\qquad \forall t\in [0,\infty).
	\end{align}
	The solution has the following regularity for any $p\in [1,\infty)$:
	\begin{align*}
	\rho\in C([0,\infty);L^1(\Ga))\cap C((0,\infty);W^{2,p}(\Ga))\cap C^1((0,\infty);L^p(\Ga)).
	\end{align*}
	If additionally, for some $p\in [1,\infty)$ $\rho_0\in L^p(\Ga)$, then $\rho\in C([0,\infty);L^p(\Ga))$; if $\rho_0\in W^{2,p}(\Ga)$, then $\rho\in C([0,\infty);W^{2,p}(\Ga))\cap C^1([0,\infty);L^p(\Ga))$.
\end{proposition}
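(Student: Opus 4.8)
The plan is to treat \eqref{ODE2} as a semilinear Cauchy problem governed by the analytic semigroup $\{e^{tA}\}_{t\ge 0}$ and run the standard fixed-point/bootstrap machinery: using (T1) for local solvability, (T2)--(T4) together with the positivity and mass-conservation \eqref{mass-consv} of the semigroup for the a priori bounds, and the smoothing estimate \eqref{LpLr-bound} for regularity. The observation that makes everything go through already at the level of $L^1$ data is that $X_1=L^1(\Ga)$, so the local Lipschitz bound \eqref{Lip-bound} in (T1) holds in the $L^1$ norm; this lets me run the contraction directly in $C([0,\tau];L^1(\Ga))$ and sidestep any weighted space accommodating the time-singularity of \eqref{LpLr-bound} near $t=0$. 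Concretely, for fixed $p\in[1,\infty)$ and $\rho_0\in X_p^+$ I would define the Duhamel map $\Gamma[\rho](t)=e^{tA}\rho_0+\int_0^t e^{(t-s)A}[T^-(\rho)+T^+(\rho)](s)\,ds$ on a ball of $C([0,\tau];X_p)$; contractivity of $e^{tA}$ on $X_p$ and \eqref{Lip-bound} make $\Gamma$ a self-map and a contraction for $\tau$ small depending only on $\|\rho_0\|_{X_p}$, yielding via Banach's theorem a unique local mild solution \eqref{mild}. Taking $p=1$ covers general $L^1$ data.

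For the a priori bounds I would first establish nonnegativity. On the local interval $\|\rho(t)\|_{L^1}$ is bounded, say by $R$, so setting $c:=C\max\{1,R\}$ the pointwise bound (T3) gives $T^-(\rho)+c\rho\ge 0$ whenever $\rho\ge 0$; rewriting \eqref{ODE2} with generator $A-c$ and source $[T^-(\rho)+c\rho]+T^+(\rho)$ (an equivalent Duhamel formulation), the positivity-preserving property of $e^{t(A-c)}$ together with (T2) shows that the fixed-point iteration started from the nonnegative $e^{tA}\rho_0$ stays in the cone $X_p^+$, so $\rho(t)\ge 0$. Mass conservation then follows by integrating the mild formula over $\Ga$: \eqref{mass-consv} removes the semigroup factor and (T4) kills $\int_\Ga[T^-+T^+]\,dm$, giving $\frac{d}{dt}\int_\Ga\rho\,dm=0$; by nonnegativity this is exactly $\|\rho(t)\|_{L^1}=\|\rho_0\|_{L^1}$.

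Global existence is then immediate. Once the mass is frozen at $\|\rho_0\|_{L^1}$, the boundedness part of (T1) degrades to the linear bound $\|T^\pm(\rho(t))\|_{X_p}\le C'\|\rho(t)\|_{X_p}$ with $C'$ depending only on the conserved mass, so Duhamel and Gr\"onwall give $\|\rho(t)\|_{X_p}\le e^{C't}\|\rho_0\|_{X_p}$; since the local existence time depends only on the $X_p$ norm, the solution cannot blow up in finite time and extends to $[0,\infty)$. For the regularity I would bootstrap: for $t>0$ the $L^1\to L^r$ smoothing \eqref{LpLr-bound} applied to the Duhamel integral places $\rho(t)$ in every $L^r$, $r<\infty$, and classical analytic-semigroup regularity (as in \cite{Pazy}) upgrades the mild solution to $\rho\in C((0,\infty);W^{2,p}(\Ga))\cap C^1((0,\infty);L^p(\Ga))$, i.e. a strong, indeed classical, solution. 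The improved-data claims are then direct: running the fixed point in $X_p$ gives continuity up to $t=0$ in $L^p$ when $\rho_0\in L^p$, and when $\rho_0\in W^{2,p}$ the source term is H\"older in time, so the maximal-regularity theory for analytic semigroups yields $\rho\in C([0,\infty);W^{2,p}(\Ga))\cap C^1([0,\infty);L^p(\Ga))$.

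The step I expect to be most delicate is nonnegativity, because mass conservation alone controls only $\int_\Ga\rho\,dm$ and not $\int_\Ga|\rho|\,dm$; without the sign the frozen-mass argument — and with it the entire Gr\"onwall-based global extension — collapses. Making the positivity argument rigorous requires correctly handling the loss term $T^-$ via the shift by $c$ and (T3), and, for merely-$L^1$ data, simultaneously ensuring that the shifted Duhamel iteration converges in $L^1$ and preserves the cone $X_p^+$.
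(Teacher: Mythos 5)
Your proposal is correct and shares the paper's overall skeleton (Picard--Lindel\"of local existence in $X_p$ via (T1), nonnegativity plus (T4) to freeze the $L^1$ mass, Gr\"onwall for the global extension, then a smoothing bootstrap for regularity), but your treatment of the step you yourself flag as delicate --- nonnegativity --- is genuinely different from the paper's. The paper freezes the gain term: it replaces $T^+(\rho)$ by an arbitrary $f\in C([0,\infty);X_p^+)$, proves the resulting auxiliary problem \eqref{ODE} has a nonnegative solution by observing that each component $u_\xi$ is a supersolution of $\partial_t u_\xi \ge A_\xi u_\xi - \beta(t)|u_\xi|$ and invoking the comparison principle, and then runs a second contraction $\mathcal{Q}(g;\rho_0)=\mathcal{P}(T^+(g);\rho_0)$ on $C([0,t_0];X_p^+)$ whose fixed point must coincide with the mild solution by uniqueness. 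You instead shift the generator to $A-c$ with $c=C\max\{1,R\}$ ($R$ the radius of the contraction ball), so that (T3) makes $T^-(\rho)+c\rho\ge 0$ on the cone, and argue that the equivalent shifted Duhamel map preserves $X_p^+$: the Picard iterates all stay in the ball, so the single constant $c$ works for every iterate, and since the cone is closed the limit is nonnegative. Your route is more self-contained --- it needs only positivity of $e^{tA}$, (T2), (T3) and closedness of the cone, not the comparison principle for semilinear parabolic supersolutions --- while the paper's route isolates the positivity mechanism in an auxiliary problem that is linear in the unknown, at the price of a second fixed-point layer. One detail you should tighten: in the regularity step, a single application of \eqref{LpLr-bound} from $L^1$ does \emph{not} place $\rho(t)$ in every $L^r$ when $D\ge 3$, because the Duhamel singularity $(t-s)^{-\frac{D}{2}\left(1-\frac{1}{r}\right)}$ is integrable only when $\frac{D}{2}\left(1-\frac{1}{r}\right)<1$; the bootstrap must proceed in finitely many increments of limited size, as the paper does with the step $\frac{1}{p}-\frac{1}{r}=\frac{1}{D}$ (exponent $\frac{1}{2}$ per step), restarting the equation at positive times to promote the gained integrability to an $X_r$ initial condition. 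Since you announce a bootstrap anyway, this is a presentational rather than a mathematical gap.
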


\begin{proof}
	Let us first address the local well-posedness of the problem \eqref{ODE2} with general $X_p$ initial data with any $p\in [1,\infty)$.
	Since the maps $T^\pm$ are assumed to be locally Lipschitz in $X_p$, by Picard-Lindel\"of theorem for Banach-valued ODEs, a unique local mild solution $\rho \in C([0,t_0);X_p)$ exists for every $\rho_0\in X_p$. 
	
	We will first prove existence, uniqueness, non-negativity and mass conservation of a global mild solution $\rho\in C([0,\infty);X_p^+)$ for every initial data $\rho_0\in X_p^+$, $p\in [1,\infty)$. Proposition \ref{well-posed} is then the special case $p=1$. 
	Given $\rho_0\in X_p^+$, let $\rho\in C([0,t_0);X_p)$ be the mild solution of \eqref{ODE2}, guaranteed by the local well-posedness result earlier, and we now show $\rho$ is nonnegative. 
	To this end, we consider \eqref{ODE2} with $T^+(\rho)$ replaced by any $f\in C([0,\infty);X_p^+)$, that is, 
	\begin{align}\label{ODE}
	\partial _t \mu =A\mu + T^-(\mu)+ f(t,x,\xi),\qquad  \mu(0)=\rho_0\in X_p^+.
	\end{align}
	We will first show that the above admits a unique \emph{nonnegative} local mild solution.
	Existence and uniqueness of a local solution $\mu\in C([0,t_0);X_p)$ follows by the local Lipschitz bound \eqref{Lip-bound} for $T^-$, and the same argument earlier.
	To prove its non-negativity, observe that for each $\xi\in G$, by (T3), \eqref{parabolic} with $f$ in place of $T^+\rho$, and $f\ge 0$, $u_\xi(t,x)=\rho(t,x,\xi)$ is a super-solution of the following semilinear equation:
	\begin{align}\label{super-sol}
	\partial_t u_\xi \ge A_\xi u_\xi -\be(t) |u_\xi|,\quad (t,x)\in(0,t_0)\times \mbx ,\quad \be(t)=C\max\{1,\|\mu(t)\|_{L^1(\Ga)}\}<\infty.
	\end{align}
	$\be(t)$ is finite on $[0,t_0)$ because $\mu\in C([0,t_0);L^1(\Ga))$.
	By the comparison principle for semilinear equations ($v\equiv 0$ is a solution to above), we have $u_\xi(t)\ge 0$, i.e., $\mu(t)\in X_p^+$, for all $t\in [0,t_0)$.
	In conclusion, we have shown that the local solution $\mu$ of \eqref{ODE} is in the space $C([0,t_0);X^+_p)$, if the initial data $\rho_0\in X_p^+$.

	Now let $Y_{t_0}=C([0,t_0];X_p^+)$ for $t_0>0$ small, and $\mathcal{P}(\cdot ;\rho_0):Y_{t_0}\to Y_{t_0}$ be the map sending $f\in Y_{t_0}$ to the solution $\mu \in Y_{t_0}$ of \eqref{ODE}.
	Subsequently, define $\mathcal {Q}(g;\rho_0)=\mathcal{P}(T^+(g);\rho_0)$, which again is a map from $Y_{t_0}\to Y_{t_0}$, because $T^+(g)\in X_p^+$ by (T2).
	Again, by choosing $t_0>0$ sufficiently small, using the Lipschitz bound from (T1) for $T^+$, one can show that $\mathcal{Q}(\cdot;\rho_0)$ is contractive in $Y_{t_0}$, and hence has a unique fixed point $\mu\in Y_{t_0}$.
	Since the fixed point also satisfies the equation \eqref{ODE2}, by uniqueness $\mu$ agrees with the unique (local) mild solution $\rho$ of \eqref{ODE2}. Thus, $\rho \in C([0,t_0];X_p^+)$.
	From here, the conservation of mass \eqref{mass-conserved} holds for all $t\in [0,t_0]$, as a direct consequence of \eqref{mass-consv}, \eqref{mild} and (T4).

	To show that the solution can be extended globally, it suffices to establish a global bound for the $\|\cdot\|_{X_p}$-norm of solutions.
	By the bound from (T1), \eqref{mild}, contractive nature of the semigroup $\{e^{tA}\}_{t\ge 0}$ and conservation of mass, a mild solution $\rho(t)$ to \eqref{ODE2} satisfies
	\begin{align*}
	\|\rho(t)\|_{X_p}&\le \|\rho_0\|_{X_p}+\int _0^t \sqb{\|T^-\rho(s)\|_{X_p}+ \|T^+\rho(s)\|_{X_p}} ds \\
	&\le \|\rho_0\|_{X_p}+2 C \int_0^t \max\{1,\|\rho(s) \|_{L^1(\Ga)}\} \|\rho(s)\|_{X_p} ds
	= \|\rho_0\|_{X_p}+C'  \int_0^t \ \|\rho(s)\|_{X_p} ds,
	\end{align*}
	where $C'=2C\max\{1,\|\rho_0\|_{L^1(\Ga)}\}$.
	Applying Gr\"onwall lemma, we then have $\|\rho(t)\|_{X_p}\le \| \rho_0\|_{X_p} e^{C' t}$.
	With this bound, the solution can be extended indefinitely in time, and thus a unique global mild solution exists. 
	To conclude, we have just shown if $\rho_0\in X_p^+$ for some $p\in[1,\infty)$, then \eqref{ODE2} admits a unique global mild solution $\rho\in C([0,\infty);X_p^+)$.
	
	Now we address the regularity issue. 
	From now on let us assume $\rho_0\in X_1^+$, and let $\rho\in C([0,\infty);X_1^+)$ be the unique global mild solution guaranteed by the previous existence and uniqueness result. 
	We now prove that $C((0,\infty);X_p^+)$ for any $p\in[1,\infty)$. 
	Using an induction argument, this follows by the following claim: if $\rho_N\in C((0,\infty);X_p^+)$ for some $p\ge 1$, and $r>p$ is such that $p^{-1}-r^{-1}=D^{-1}$ (recall $D$ is the spatial dimension of $\mbx$), then $\rho_N\in C((0,\infty);X_r^+)$.
	Indeed, regarding $\rho_N$ as a solution of \eqref{ODE2} starting at $t=\de_0$ for some fixed $\de_0>0$,  by (T1), \eqref{mild} and the $L^p\to L^r$ bound \eqref{LpLr-bound}, we have for all $t>0$ that
	\begin{align*}
	\|\rho_N(t+\de_0)\|_{L^r(\Ga)}&\le Ct^{-\f 12 }\|\rho_N(\de_0)\|_{L^p(\Ga)}+ C\int_0^t  (t-s)^{-\f 12 }\|\rho_N(s+\de_0)\|_{X_p} ds\\
	&\le Ct^{-\f 12 }\|\rho_N(\de_0)\|_{X_p}+ Ct^{1/2}\max_{\de_0\le s \le \de_0+t} \|\rho_N(s)\|_{X_p}<\infty,
	\end{align*}
	with some constant $C=C(D,p,r, \|\rho_0\|_{L^1(\Ga)})$.
	Now regarding $\rho_N$ as a solution of the forward equation \eqref{ODE2} starting at $t=2\de_0$, with initial data $\rho_N(2\de_0)\in X_r^+$, the previous existence and uniqueness result then implies $\rho_N\in C([2\de_0,\infty);X_r^+)$. 
	Since $\de_0>0$ is arbitrary, it follows $\rho_N\in C((0,\infty);X_r^+)$, which is our claim.

	We have shown $\rho_N\in C((0,\infty);L^p(\Ga))$ for any $p\in [1,\infty)$.
	By (T1) we have $T^\pm(\rho)\in C((0,\infty);L^p(\Ga))$. 
	By \eqref{parabolic}, since the forcing $T^\pm(\rho)$ is in $L^p(\mbx)$, 
	the standard parabolic regularity results implies $u_\xi(t,x)=\rho(t,x,\xi)\in C((0,\infty);W^{2,p}(\mbx))\cap C^1((0,\infty);L^p(\mbx))$ for every $\xi\in G$, e.g. see Theorem 7.22 of \cite{Lieb96} or Section IV.3 of \cite{LSU68}.
 	Hence, it follows $\rho_N \in C((0,\infty);W^{2,p}(\Ga))\cap C^1((0,\infty);L^p(\Ga))$, for any $p\ge 1$.
	Of course, if $\rho_N(0)\in W^{2,p}(\Ga)$ initially, we may replace the open time interval $(0,\infty)$ above by the closed one $[0,\infty)$.
	This finishes the proof.

\end{proof}

Finally, we give a proof to the strict positivity of solutions correspondent to positive initial data \eqref{ODE2}, for the case of torus $\mbx=\mathbb{T}^D$.

\begin{proposition}\label{positive2}
	Let $\mbx=\mathbb{T}^D$, $\rho_0\in L^1(\Ga)$ with $\rho_0(x,\xi) \geq 0$, and let $\rho$ be the corresponding solution of \eqref{ODE2}.  For any $\xi \in \{ 1,\dots,n\}$, if $\rho_0(\cdot,\xi)$ is positive on a set of positive measure, then $\inf_{(t,x)\in [t_0,t_1]\times \Pi} \rho(t,x,\xi)>0$ for any $0 < t_0 < t_1$. Moreover, if $\inf_{(x,\xi)\in \Ga}\rho_0(x,\xi)>0$, then $\inf_{(t,x,\xi)\in [0,t_1]\times \Pi} \rho(t,x,\xi)>0$ for any $t_1>0$.
\end{proposition}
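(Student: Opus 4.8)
The plan is to treat each type $\xi\in G$ separately, reduce the scalar equation \eqref{parabolic} to a heat equation with a \emph{nonnegative} source by an exponential change of the unknown, and then exploit the strict positivity of the heat kernel on the torus. Throughout I would use Proposition \ref{well-posed}: the solution $\rho$ is nonnegative, conserves mass, and (since $\rho\ge0$) satisfies $\|\rho(t)\|_{L^1(\Ga)}=\|\rho_0\|_{L^1(\Ga)}$ for all $t$, so the quantity $C\max\{1,\|\rho(t)\|_{L^1(\Ga)}\}$ appearing in (T3) equals a time-independent constant, which I denote $\be$. Moreover $\rho\in C((0,\infty);W^{2,p}(\Ga))\cap C^1((0,\infty);L^p(\Ga))$, so for $t>0$ each fiber $u_\xi(t,x):=\rho(t,x,\xi)$ is a classical solution of \eqref{parabolic}.

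First I would absorb the signed term $T^-$. Fix $\xi$ and set $v_\xi(t,x):=e^{\be t}u_\xi(t,x)$. From \eqref{parabolic} and $\partial_t v_\xi=\be v_\xi+e^{\be t}\partial_t u_\xi$ one obtains
\[
\partial_t v_\xi=A_\xi v_\xi+h_\xi,\qquad h_\xi(t,\cdot):=e^{\be t}\big[\be\,u_\xi+T^-(\rho)(t,\cdot,\xi)+T^+(\rho)(t,\cdot,\xi)\big].
\]
Because $\rho\ge0$ forces $u_\xi\ge0$, the pointwise bound (T3) gives $T^-(\rho)(t,\cdot,\xi)\ge-\be\,u_\xi$, while (T2) gives $T^+(\rho)(t,\cdot,\xi)\ge0$; hence $h_\xi\ge0$. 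Representing the classical solution $v_\xi$ (for $t>0$) by the variation-of-parameters formula for $A_\xi$ (the fiberwise form of \eqref{mild}, valid since $A$ acts on each type $\xi$ separately),
\[
v_\xi(t)=e^{tA_\xi}\rho_0(\cdot,\xi)+\int_0^t e^{(t-s)A_\xi}h_\xi(s)\,ds,
\]
and using that $e^{\tau A_\xi}$ preserves nonnegativity, the integral is nonnegative, so $v_\xi(t)\ge e^{tA_\xi}\rho_0(\cdot,\xi)$. Undoing the substitution gives, for every $t>0$ and $x\in\mbx$,
\[
u_\xi(t,x)\ge e^{-\be t}\,\big(e^{tA_\xi}\rho_0(\cdot,\xi)\big)(x).
\]

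It then remains to bound the right-hand side below. Since $\mbx=\mathbb{T}^D$ and, by \eqref{ellip}, $A_\xi$ is a constant-coefficient nondegenerate operator, the semigroup acts for $t>0$ by convolution against a smooth, strictly positive kernel, $(e^{tA_\xi}g)(x)=\int_{\mbx}p_t^\xi(x,y)g(y)\,dy$ with $p_t^\xi>0$. For the first assertion, if $\rho_0(\cdot,\xi)\ge0$ is positive on a set of positive measure, then $(e^{tA_\xi}\rho_0(\cdot,\xi))(x)>0$ for every $x$ and every $t>0$, and by joint continuity of $p_t^\xi$ together with compactness of $[t_0,t_1]\times\mbx$ this quantity has a strictly positive infimum there; combined with $e^{-\be t}\ge e^{-\be t_1}$ this yields $\inf_{(t,x)\in[t_0,t_1]\times\mbx}u_\xi(t,x)>0$. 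For the second assertion, if $\rho_0\ge c>0$ on $\Ga$, then using that $A_\xi$ annihilates constants (no zeroth-order term) so that $e^{tA_\xi}$ fixes constant functions, we get $e^{tA_\xi}\rho_0(\cdot,\xi)\ge c$, whence $u_\xi(t,x)\ge c\,e^{-\be t}\ge c\,e^{-\be t_1}$ for $t\in(0,t_1]$; at $t=0$ the bound $u_\xi(0,\cdot)=\rho_0(\cdot,\xi)\ge c$ holds directly, so $\inf_{(t,x)\in[0,t_1]\times\mbx}u_\xi(t,x)>0$.

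The only substantive analytic input is the strict positivity and joint continuity of the torus heat kernel, which is classical; the one point deserving care is the passage from the merely $L^1$ datum $\rho_0$ to pointwise lower bounds, which I would resolve by working on $t>0$, where Proposition \ref{well-posed} makes $u_\xi$ classical and the Duhamel representation rigorous, and by handling the endpoint $t=0$ in the second assertion separately.
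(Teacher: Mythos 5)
Your proposal is correct and follows essentially the same route as the paper: the paper derives the super-solution inequality $\partial_t u_\xi \ge A_\xi u_\xi - \be_0 u_\xi$ from (T2)--(T3) and mass conservation, then invokes the comparison principle to get $u_\xi(t,x)\ge e^{-\be_0 t}\, H_\xi(t,\cdot)*\rho_0(\cdot,\xi)$ and concludes by strict positivity of the torus heat kernel. Your exponential substitution $v_\xi = e^{\be t}u_\xi$ together with the Duhamel representation and positivity of the semigroup is precisely an explicit implementation of that comparison step, so the two arguments coincide in substance.
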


\begin{proof}
	As shown in the previous proof, for each $\xi\in G$, $u_\xi(t,x)=\rho(t,x,\xi)$ is a super-solution of \eqref{super-sol}. By the conservation of mass, $\be(t)=\be_0$ for some $\be_0>0$ depending on $\|\rho_0\|_{L^1(\Ga)}$. Hence, we have
	\begin{align}
	\partial_t u_\xi\ge A_\xi u_\xi-\be_0 u_\xi,\qquad (t,x)\in (0,\infty)\times \mathbb{T}^D. \label{ulower1}
	\end{align}
	The comparison principle implies
	\begin{align*}
	u_\xi(t,x)&\ge e^{-\be_0 t} H_\xi(t,x)*\rho_0(\cdot,\xi)
	\end{align*}
where $H_\xi$ is the heat kernel on $\mbx = \mathbb{T}^D$ for the uniformly elliptic operator $A_\xi$.  If $\rho_0(\cdot,\xi)$ is positive on a set of positive measure, then $H_\xi(t,x)*\rho_0(\cdot,\xi)$ is strictly positive on any compact subset of $(0,\infty) \times \mathbb{T}^D$.   Moreover, if $\rho_0(\cdot,\xi)$ is bounded below by $\epsilon > 0$, then $H_\xi(t,x)*\rho_0 \geq \epsilon$ also holds.
\end{proof}

Now we give an improved positivity result using the particular structure of the forward equations \eqref{FP-eq} and \eqref{MFE}. Given a subset of species $V_0 \subset \fs$, we inductively define an increasing family of sets $V_n \subset \fs$ by
\begin{align}
V_{n+1} = \bigcup \{ k',\ell' \in \fs \;|;\  \; \{k',\ell'\} = R^+ \;\; \text{for some $R \in \fr$ with $R^- \subset V_n$} \}, \quad n \geq 0. \label{Vndef}
\end{align}
(Recall the notation \eqref{in-output-red} for $R^\pm$.) That is, $V_{n+1}$ is the set of all chemical species (types) that are the products of reactions having inputs only from $V_n$.  Then, we define the set
\[
\overline{V_0} = \bigcup_{n \geq 0} V_n
\] 
We call this set $\overline{V_0} \subset \fs$ the closure of $V_0$ under the reaction network dynamics.  Finally, we say that an initial density $\bar \rho_0(x,\xi) \geq 0$ on $\mbx \times \fs$ is {\em propagating} if $\overline{V_0} = \fs$ when 
\begin{equation}
V_0 = \{ \xi \in \fs \;|\; \text{$\bar \rho_0(\cdot,\xi)$ is positive on a set of positive measure} \}. \label{V0def}
\end{equation}

\begin{proposition}\label{positive}
	Let $\bar \rho$ be a probability density on $\Pi$ satisfying \eqref{MFE} with initial condition $\bar \rho_0(x,\xi)$. Let $V_0 = V_0(\bar \rho_0)$ be defined by \eqref{V0def}. Then for all $\xi \in \overline{V_0}$, $\inf_{(t,x) \in C} \bar \rho(t,x,\xi) > 0$ for any compact set $C \subset (0,\infty) \times \mathbb{T}^d$. Moreover, for all $\xi \in \fs \setminus \overline{V_0}$, $\bar \rho(t,x,\xi) = 0$ holds for all $t \geq 0$, $x \in \mathbb{T}^d$.
\end{proposition}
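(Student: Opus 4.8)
The plan is to treat the two assertions separately, using in both the structure of the reaction terms in \eqref{MFE}. Writing $u_\xi(t,x)=\bar\rho(t,x,\xi)$, the loss term is pointwise $F_\xi^-=-c_\xi u_\xi$, where $c_\xi\ge0$ is a sum of convolutions $\Phi_R*u_m$ over reactions $R$ with $\xi\in R^-$ and is therefore bounded by a constant $\beta_0=\beta_0(\|\Phi\|_{L^\infty})$ (using $\|u_m\|_{L^1}\le1$ by mass conservation), while the gain term $F_\xi^+\ge0$ is a sum over reactions producing $\xi$ of nonnegative products $(\Phi_R*u_m)u_j$. I will use the regularity and nonnegativity from Propositions \ref{well-posed} and \ref{positive2}, and I assume each kernel is nontrivial, $\|\Phi_R\|_{L^1}>0$ (a vacuous reaction may be deleted from the network). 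The only combinatorial input is that $\overline{V_0}$ is closed under the network, i.e. $R^-\subseteq\overline{V_0}\Rightarrow R^+\subseteq\overline{V_0}$: if $\{k,l\}=R^-\subseteq\bigcup_m V_m$, then by monotonicity of the $V_m$ both $k,l$ lie in a common $V_{m_0}$, whence $R^+\subseteq V_{m_0+1}\subseteq\overline{V_0}$.

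For the positivity claim I induct on the least generation $n$ with $\xi\in V_n$ (finite, since $\fs$ is finite). The base case $\xi\in V_0$ is precisely Proposition \ref{positive2}. For the inductive step, suppose the claim holds on $V_n$ and let $\xi\in R^+$ for some $R:(k,l)\to(k',l')$ with $\{k,l\}\subseteq V_n$, say $\xi=k'$ (the case $\xi=l'$ is symmetric). Fix $0<\tau_1<\tau_2$; by the inductive hypothesis $u_k,u_l$ are bounded below by positive constants on $[\tau_1,\tau_2]\times\mathbb{T}^d$, so $(\Phi_R*u_l)\ge(\inf u_l)\|\Phi_R\|_{L^1}>0$ there, and hence $g:=(\Phi_R*u_l)\,u_k$ satisfies $g\ge\gamma>0$ on $[\tau_1,\tau_2]\times\mathbb{T}^d$ while $g\ge0$ everywhere. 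Since $F_\xi^+\ge g$ and $F_\xi^-\ge-\beta_0 u_\xi$, the function $u_\xi$ is a supersolution of the \emph{linear} equation $\partial_t w=A_\xi w-\beta_0 w+g$ with $w(0)=\bar\rho_0(\cdot,\xi)\ge0$, so the comparison principle yields
\[
u_\xi(t,x)\ \ge\ \int_0^t e^{-\beta_0(t-s)}\bigl(e^{(t-s)A_\xi}g(s,\cdot)\bigr)(x)\,ds .
\]
The torus heat semigroup $e^{\tau A_\xi}$ has a strictly positive, mass-preserving kernel, so for a compact set $C=[a,b]\times\mathbb{T}^d$ with $\tau_2<a$ I restrict the integral to $s\in[\tau_1,\tau_2]$ and use $\bigl(e^{(t-s)A_\xi}g(s,\cdot)\bigr)(x)\ge\gamma$ to obtain $u_\xi(t,x)\ge\gamma(\tau_2-\tau_1)e^{-\beta_0 b}>0$ uniformly on $C$; this is the asserted lower bound and completes the induction.

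For the vanishing claim put $W=\fs\setminus\overline{V_0}$ and $S(t)=\sum_{\xi\in W}\int_{\mathbb{T}^d}u_\xi(t,x)\,dx$, noting $S(0)=0$ because $\bar\rho_0(\cdot,\xi)=0$ a.e. for $\xi\notin V_0$. Integrating \eqref{MFE} over the torus annihilates the Laplacian, so $S'(t)=\sum_{\xi\in W}\int(F_\xi^-+F_\xi^+)\,dx$, and the loss part is $\le0$. By the closure property, any reaction $R$ with $R^+\cap W\ne\emptyset$ has $R^-\cap W\ne\emptyset$, so each production integral $\int(\Phi_R*u_m)u_j\,dx$ carries a factor from $W$; bounding the $L^1$ mass of that $W$-factor by $S(t)$ (and the remaining convolution and $L^1$ factors by $\|\Phi_R\|_{L^\infty}$ and $1$) gives $\sum_{\xi\in W}\int F_\xi^+\,dx\le C'S(t)$. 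Hence $S'\le C'S$ with $S(0)=0$ and $S\ge0$, and Gr\"onwall forces $S\equiv0$; nonnegativity then yields $u_\xi\equiv0$ for every $\xi\in W$.

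The main obstacle is the inductive positivity step: unlike in Proposition \ref{positive2}, where strict positivity comes directly from the initial data, here $\xi$ need not be present initially and positivity must be manufactured purely from the production term, which itself becomes positive only for $t>0$. This forces the argument through the full Duhamel representation and the strict positivity of the torus heat kernel, and it is where the nontriviality assumption $\|\Phi_R\|_{L^1}>0$ enters.
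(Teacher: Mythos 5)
Your proof is correct, and for the positivity claim it takes essentially the paper's route: induction over the generations $V_n$, with Proposition \ref{positive2} as the base case, and in the inductive step the production term $g=(\Phi_R*u_l)u_k\le F_{k'}^+$ from \eqref{limitsystem} acting as a source that is uniformly positive on compact time slabs, so that $u_{k'}$ dominates the solution of a linear heat equation with damping $-\beta_0$ and forcing $g$. The difference is in execution: where the paper inserts a constant lower bound $c_1$ for the source into a differential inequality and simply invokes ``the maximum principle,'' you write out the Duhamel representation and use that the torus heat semigroup is conservative, obtaining the explicit bound $\gamma(\tau_2-\tau_1)e^{-\beta_0 b}$. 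This is more transparent exactly at the delicate point: a species $k'\in V_{n+1}\setminus V_0$ may have vanishing initial data, so the lower bound must be manufactured entirely from the time integral of the source, which your formula exhibits. Beyond that, your write-up covers two things the paper's proof does not. First, the paper's argument stops after the induction and never proves the ``moreover'' assertion that $\bar \rho(\cdot,\cdot,\xi)\equiv 0$ for $\xi\in\fs\setminus\overline{V_0}$; your Gr\"onwall argument for $S(t)=\sum_{\xi\in \fs\setminus\overline{V_0}}\|u_\xi(t)\|_{L^1(\mathbb{T}^d)}$ — which uses the closure property to guarantee that every production term feeding a species outside $\overline{V_0}$ carries an $L^1$-factor bounded by $S(t)$ — supplies this missing half, and it is sound (differentiability of $S$ on $(0,\infty)$ and $\int\Delta u_\xi\,dx=0$ follow from the regularity in Proposition \ref{well-posed}). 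Second, you state explicitly the hypothesis $\|\Phi_R\|_{L^1}>0$; this is genuinely needed for the proposition as stated, since a species reachable only through a reaction with identically vanishing kernel would belong to $\overline{V_0}$ yet have identically zero density, and the paper uses the same hypothesis silently when it asserts $\inf_C(\bar T_R^+\bar \rho)>0$. So: same core mechanism for the positivity half, implemented more explicitly, plus a complete proof of the vanishing half that the paper omits.
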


\begin{proof}
By Proposition \ref{positive2}, we know that for any compact set $C \subset (0,\infty) \times \mathbb{T}^d$
\begin{equation}
\inf_{(t,x) \in C} \bar \rho(t,x,\xi) > 0 \label{uniflowercomp}
\end{equation}
holds for all $\xi \in V_0$. Now, proceeding inductively, suppose that \eqref{uniflowercomp} holds for all $\xi \in V_n$, for some $n \geq 0$, with $V_n$ defined via \eqref{Vndef}.  Then if $\xi' \in V_{n+1}$, the definition of operator $\bar T^+_R$ in \eqref{TR-def} implies
\[
\inf_{(t,x) \in C}  (\bar T^+_R \bar \rho)(t,x,\xi') > 0
\]
holds for any compact set $C \subset (0,\infty) \times \mathbb{T}^d$. As in \eqref{ulower1}, this implies that for any compact $C' \subset (0,\infty) \times \mathbb{T}^d$, there is a constant $c_1 > 0$ such that
\[
\partial_t \bar \rho(t, x,\xi') \geq \frac{\sigma(\xi')^2}{2} \Delta \bar \rho(t,x,\xi') - \beta_0 \bar \rho(t,x,\xi') + c_1
\]
holds for all $(t,x) \in C'$.  This and the maximum principle implies that the condition \eqref{uniflowercomp} also holds for $\xi'$.  Since \eqref{uniflowercomp} holds for all such $\xi' \in V_{n+1}$, we conclude by induction on $n$ that \eqref{uniflowercomp} holds for all $\xi \in \overline{V_0}$.
\end{proof}

  \bibliographystyle{abbrv}

\bibliography{crn}

\end{document}